\documentclass[12pt]{article}
\usepackage{amssymb,amsthm,hyperref}
\newtheorem{thm}{Theorem}[section]

 \newtheorem{lem}{Lemma}[section]
 \newtheorem{prop}{Proposition}[section]
 \newtheorem{defn}{Definition}[section]
\theoremstyle{remark}
\newtheorem{rem}{Remark}[section]

\setlength{\textheight}{21.6cm} \setlength{\textwidth}{16cm}
\hoffset=-2.2cm \voffset=-2.0cm

\title{Commutator estimates in Besov-Morrey spaces with applications
to the well-posedness of the Euler equations and ideal MHD system }
\author{Jiang Xu\thanks {E-mail: jiangxu\underline{ }79@yahoo.com.cn}\\
\small{\textit{Department of Mathematics}},\\\small{\textit{Nanjing
University of Aeronautics and Astronautics}},
\\ \small{\textit{Nanjing 211106, P.R.China}}\\[5mm]
Yong Zhou\thanks{E-mail: yzhoumath@zjnu.edu.cn}\\
\small{\textit{Department of Mathematics}},\\
\small{\textit{Zhejiang Normal University, Jinhua 321004,
P.R.China}}}
\date{}
\begin{document}
\maketitle{} \begin{abstract} We develop commutator estimates
in the framework of Besov-Morrey spaces, which are modeled on Besov
spaces and the underlying norm is of Morrey space rather than the
usual $L^{p}$ space. As direct applications of commutator estimates,
we establish the local well-posedness and blow-up criterion of
solutions in Besov-Morrey spaces for the incompressible
Euler equations and ideal MHD system. Main analysis tools are the
Littlewood-Paley decomposition and Bony's para-product formula.
\end{abstract}

\hspace{-0.5cm}\textbf{Keywords.} \small{Well-posedness;
 Euler equations; ideal MHD system; Besov-Morrey spaces}\\

\hspace{-0.5cm}\textbf{AMS subject classification:} \small{35L25;\
35L45;\ 76N15}

\section{Introduction}\label{sec:1}

\subsection{Euler equations and MHD system}
In this paper, the one interest is to consider the incompressible
Euler equations for perfect fluid
\begin{equation}
\left\{
\begin{array}{l}
\partial_{t}\upsilon + (\upsilon\cdot\nabla)\upsilon+\nabla P = 0,\ \ (x,t)\in \mathbb{R}^{n}\times(0,\infty),\\
\mathrm{div}\upsilon=0,\ \ (x,t)\in \mathbb{R}^{n}\times(0,\infty),\\
\upsilon(x,0)=\upsilon_{0}(x),\ \  x\in \mathbb{R}^{n},
\end{array} \right.\label{R-E1}
\end{equation}
where $n\geq2$,
$\upsilon=\upsilon(x,t)=(\upsilon^1,\upsilon^2,\cdot\cdot\cdot,\upsilon^n)$
stands for the velocity of the fluid, $P=P(x,t)$ is the pressure,
and $\upsilon_{0}(x)$ is the given initial velocity satisfying
$\mathrm{div}\upsilon_{0}=0$.

The other interest is to consider the ideal magneto-hydrodynamics
(MHD) system
\begin{equation}
\left\{
\begin{array}{l}
\partial_{t}\upsilon + (\upsilon\cdot\nabla)\upsilon-(b\cdot\nabla)b+\nabla \Pi = 0,\ \ (x,t)\in \mathbb{R}^{n}\times(0,\infty),\\
\partial_{t}b+(\upsilon\cdot\nabla)b-(b\cdot\nabla)\upsilon=0,\ \ (x,t)\in \mathbb{R}^{n}\times(0,\infty),\\
\mathrm{div}\upsilon=0,\ \ \mathrm{div}b=0,\ \ (x,t)\in \mathbb{R}^{n}\times(0,\infty),\\
\upsilon(x,0)=\upsilon_{0}(x),\ \ b(x,0)=b_{0}(x),\ \   x\in
\mathbb{R}^{n},
\end{array} \right.\label{R-E2}
\end{equation}
where $n\geq2$,
$\upsilon=\upsilon(x,t)=(\upsilon^1,\upsilon^2,\cdot\cdot\cdot,\upsilon^n)$
denotes the velocity of the fluid,
$b=b(x,t)=(b^1,b^2,\cdot\cdot\cdot,b^n)$ denotes the magnetic filed,
and $\Pi=P(x,t)+\frac{1}{2}|b(x,t)|^2$ is the total pressure.
$\upsilon_{0}(x)$ and $b_{0}$ are the initial velocity and initial
magnetic fields satisfying $\mathrm{div}\upsilon_{0}=0$ and
$\mathrm{div}b_{0}=0$, respectively.

\subsection{Related results}
For the well-posedness of the system (\ref{R-E1}), there are many
results available. Given $\upsilon_{0}\in H^{s},\ s>1+n/2$, Kato
\cite{K} established the local existence and uniqueness of regular
solution belonging to $C([0,T];H^{s}({\mathbb{R}^n}))$ with
$T=T(\|\upsilon_{0}\|_{H^{s}({\mathbb{R}^n})})$. Later, various
function spaces are used to consider the well-posedness for the
incompressible Euler equations. Kato and Ponce \cite{KP} extended
the result to the fractional-order Sobolev space $W^{s,p}$ with
$s>1+n/p,\ 1<p<\infty$. Vishik \cite{V1} showed the global
well-posedness  in the critical Besov space
$B^{1+2/p}_{p,1}({\mathbb{R}^2})(1<p<\infty)$. Subsequently, Vishik
\cite{V2} proved the existence ($n=2$) and uniqueness ($n\geq2$)
result for (\ref{R-E1}) with initial vorticity belonging to a space
of Besov type. In \cite{Z}, the second author generalized the
results of Vishik in critical Besov space
$B^{1+n/p}_{p,1}(1<p<\infty,\ n\geq3)$. Pak and Park \cite{PP}
considered the endpoint Besov space
$B^{1}_{\infty,1}({\mathbb{R}^n})$ and proved the corresponding
results. Chae \cite{C1} studied the case of the initial data
belonging to the Triebel-Lizorkin space. Based on \cite{C1}, Chen,
Miao and Zhang \cite{CMZ} studied the local well-posedness of the
ideal MHD system (\ref{R-E2}) in the Triebel-Lizorkin space. Miao
and Yuan \cite{MY} established the existence results for
(\ref{R-E2}) in the critical Besov space
$B^{1+n/p}_{p,1}({\mathbb{R}^n})(1\leq p\leq\infty)$.

For the blow-up criterion of solutions, Beale, Kato and Majda
\cite{BKM} showed a celebrated criterion for solutions in terms of
the vorticity $\omega=\nabla\times\upsilon$, namely,
  $\limsup_{t\rightarrow T_{*}}\|\upsilon(t)\|_{H^{s}}=\infty$
    if only if $\int^{T_{*}}_{0}\|\omega(t)\|_{L^\infty}dt=\infty.$
Subsequently, this result is extended to a larger class of solutions
by replacing the $L^\infty$ norm by the $BMO$ norm for the
vorticity, and $H^{s}({\mathbb{R}^n})$
 by $W^{s,p}({\mathbb{R}^n})$ for the velocity in the work \cite{KT} of Kozono and Taniuchi.
In their continuation work \cite{KOT}, Kozono, Ogawa and Taniuchi
further give a sharper criterion, which the the $BMO$ norm is
replace by the Besov space $\dot{B}^{0}_{\infty,\infty}$ for the
vorticity, since the continuous embedding $L^\infty\hookrightarrow
BMO\hookrightarrow\dot{B}^{0}_{\infty,\infty}$. By replacing
$W^{s,p}$ by $B^{s}_{p,r}$ for the velocity, Chae \cite{C1,C2}
establish the criterion by the $\dot{F}^{0}_{\infty,\infty}$ norm
for the vorticity. Chen, Miao and Zhang \cite{CMZ} also obtained a
similar criterion of (\ref{R-E2}) by the
$\dot{F}^{0}_{\infty,\infty}$ norm for $\omega$ and $\nabla\times
b$.

Recently, Based on the weak $L^{p}$ spaces $L^{p,\infty}$, R. Takada
\cite{TR} introduced Besov type function spaces
$B^{s,\infty}_{p,r}$, and established the local well-posedness of
solutions for (\ref{R-E1}) in the weak spaces. Following from this
line of study, we introduce a class of relatively weaker function
spaces, which first initialed by Kozono and Yamazaki \cite{KY} to
obtain the critical regularity for Navier-Stokes equations. They are
modeled on Besov spaces, but the underlying norm is of Morrey type
rather than $L^{p}$ or $L^{p,\infty}$ . One calls them Besov-Morrey
spaces. So far, there are few results constructed in the
Besov-Morrey spaces for systems (\ref{R-E1}) and (\ref{R-E2}). The
main aim of this paper is to answer these problems.

Before stating our main results, we first recall the definitions of
Morrey spaces and Besov-Morrey spaces (see \cite{KY}) for the
convenience of reader.

\begin{defn}\label{defn1.1}
For $1\leq q\leq p\leq\infty$, the Morrey space
$M^{p}_{q}(\mathbb{R}^{n})$ is defined as the set of functions
$f(x)\in L^{q}_{loc}(\mathbb{R}^{n})$ such that
$$\|f\|_{M^{p}_{q}(\mathbb{R}^{n})}:=\sup_{x_{0}\in\mathbb{R}^{n}}\sup_{r>0}r^{n/p-n/q}\Big(\int_{B(x_{0},r)}|f(y)|^{q}dy\Big)^{1/q}<\infty.$$
\end{defn}
Let us remark that it is easy to see that the relation
$M^{p}_{q_{1}}(\mathbb{R}^{n})\subset M^{p}_{q_{2}}(\mathbb{R}^{n})$
with $1\leq q_{2}<q_{1}\leq p\leq\infty,
M^{p}_{p}(\mathbb{R}^{n})=L^{p}$ and
$M^{\infty}_{q}(\mathbb{R}^{n})=L^{\infty}(\mathbb{R}^{n})$ for
$1\leq q\leq\infty$. In addition, from \cite{KY}, we know that
$L^{p,\infty}(\mathbb{R}^{n})\subset M^{p}_{q}(\mathbb{R}^{n})$ with
$1\leq q< p<\infty$.

Let $\chi(t)\in C^{\infty}_{0}(\mathbb{R})$ such that
$0\leq\chi(t)\leq1, \chi(t)\equiv1$ for $t\leq1$ and
$\mathrm{supp}\chi\subset[0,2]$. Denote
$\mathcal{F}[\varphi_{j}](\xi)=\chi(2^{-j}|\xi|)-\chi(2^{1-j}|\xi|)$
and $\mathcal{F}[\varphi_{(0)}](\xi)=\chi(|\xi|)$ for $\xi\in
\mathbb{R}^{n}$, where $\mathcal{F}[g]$ denotes the Fourier
transform of $g$ on $R^{n}$. Then
$$\mathcal{F}[\varphi_{(0)}](\xi)+\sum_{j\geq1}\mathcal{F}[\varphi_{j}](\xi)=1,\ \ \ \mbox{for}\ \ \xi\in \mathbb{R}^{n};$$
$$\sum_{j\in\mathbb{Z}}\mathcal{F}[\varphi_{j}](\xi)=1,\ \ \ \mbox{for}\ \ \xi\in \mathbb{R}^{n}\backslash\{0\}.$$
Given $f\in \mathcal{S}'$, where $\mathcal{S}'$ is the dual space of
Schwartz class $\mathcal{S}$. To define the homogeneous Besov-Morrey
spaces, we set
$$\dot{\Delta}_{j}f=\mathcal{F}^{-1}\varphi_{j}(\cdot)\mathcal{F}f,\ \ \ \ j=0,\pm1,\pm2,...$$

\begin{defn}\label{defn1.2}
For $1\leq q\leq p\leq\infty,\ 1\leq r\leq\infty,$ and $s\in
\mathbb{R}$, the homogeneous Besov-Morrey space
$\dot{N}^{s}_{p,q,r}$ is defined by
$$\dot{N}^{s}_{p,q,r}=\{f\in \mathcal{S}'/\mathcal{P}:\|f\|_{\dot{N}^{s}_{p,q,r}}<\infty\},$$
where
$$\|f\|_{\dot{N}^{s}_{p,q,r}}
=\cases{\Big(\sum_{j\in\mathbb{Z}}(2^{js}\|\dot{\Delta}_{j}f\|_{M^{p}_{q}})^{r}\Big)^{1/r},\
\ r<\infty, \cr \sup_{j\in\mathbb{Z}}
2^{js}\|\dot{\Delta}_{j}f\|_{M^{p}_{q}},\ \ r=\infty} $$ and
$\mathcal{P}$ denotes the set of polynomials with $n$ variables.
\end{defn}

To define the inhomogeneous Besov-Morrey spaces, we set
$$\Delta_{j}f=\cases{0,\, \, \, \, \, \, \, \, \hspace{24mm} \ j\leq-2,\cr
\mathcal{F}^{-1}[\varphi_{(0)}](\xi)\mathcal{F}[f],\ \ \ j=-1,\cr
\mathcal{F}^{-1}[\varphi_{j}](\xi)\mathcal{F}[f], \ \hspace{3mm} \
j=0,1,2,...}$$

\begin{defn}\label{defn2.2}
For $1\leq q\leq p\leq\infty,\ 1\leq r\leq\infty,$ and $s\in
\mathbb{R}$, the inhomogeneous Besov space $N^{s}_{p,q,r}$ is
defined by
$$N^{s}_{p,q,r}=\{f\in \mathcal{S}':\|f\|_{N^{s}_{p,q,r}}<\infty\},$$
where
$$\|f\|_{N^{s}_{p,q,r}}
=\cases{\Big(\sum_{j=-1}^{\infty}(2^{js}\|\Delta_{j}f\|_{M^{p}_{q}})^{r}\Big)^{1/r},\
\ r<\infty, \cr \sup_{j\geq-1} 2^{js}\|\Delta_{q}f\|_{M^{p}_{q}},\ \
r=\infty.}$$
\end{defn}
It is not difficult to see that the Besov-Morrey space
$\dot{N}^{s}_{p,q,r}$ and $N^{s}_{p,q,r}$ are the Banach spaces.
Recall that the standard homogeneous Besov space $\dot{B}^{s}_{p,r}$
and inhomogeneous Besov space $B^{s}_{p,r}$ (see, e.g., \cite{BCD}),
where the $L^{p}$ space is replaced by the Morrey space $M^{p}_{q}$
now.

Main results are stated as follows.
\begin{thm}\label{thm1.1}(1)(Local-time existence)
Let $1<q\leq p<\infty.$ Assume that $s$ and $r$ satisfy $s>1+n/p,\
1\leq r\leq\infty$\ or\ $s=1+n/p,\ r=1$. Suppose that the initial
data $\upsilon_{0}\in N^s_{p,q,r}$ satisfying
$\mathrm{div}\upsilon_{0}=0$. Then
there exist $T_{1}>0$ and a unique solution $\upsilon$ of (\ref{R-E1}) such that $\upsilon\in C([0,T_{1}],N^s_{p,q,r})$.\\
(2)(Blow-up criterion)
\begin{itemize}
\item[(i)] Let $s>1+n/p$ and\ $1\leq r\leq\infty$. Then the local-in-time solution $\upsilon\in C([0,T_{1}],N^s_{p,q,r})$ blows up at $T_{*}>T_{1}$ in $N^s_{p,q,r}$, namely
    $$\limsup_{t\rightarrow T_{*}}\|\upsilon(t)\|_{N^s_{p,q,r}}=\infty$$
    if only if $\int^{T_{*}}_{0}\|(\nabla\times\upsilon)(t)\|_{\dot{B}^{0}_{\infty,\infty}}dt=\infty;$
\item[(ii)] Let $s=1+n/p$ and \ $r=1$. Then the local-in-time solution $\upsilon\in C([0,T_{1}],N^{1+n/p}_{p,q,1})$ blows up at $T_{*}>T_{1}$ in $N^{1+n/p}_{p,q,1}$, namely
    $$\limsup_{t\rightarrow T_{*}}\|\upsilon(t)\|_{N^{1+n/p}_{p,q,1}}=\infty$$
    if only if
    $\int^{T_{*}}_{0}\|(\nabla\times\upsilon)(t)\|_{\dot{B}^{0}_{\infty,1}}dt=\infty$.
\end{itemize}
\end{thm}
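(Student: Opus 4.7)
The plan is to follow the classical quasilinear strategy for the incompressible Euler equations, adapted to the Besov--Morrey framework through the commutator and product estimates established earlier in the paper, Bony's paraproduct decomposition, and the boundedness of Riesz transforms on the Morrey space $M^p_q$ for $1<q\leq p<\infty$. For the local existence in part (1), I would construct an approximating sequence $\{\upsilon^{(k)}\}$ by a Friedrichs--Picard scheme solving the linear system
\begin{equation*}
\partial_{t}\upsilon^{(k+1)} + (\upsilon^{(k)}\cdot\nabla)\upsilon^{(k+1)} + \nabla P^{(k+1)} = 0,\qquad \mathrm{div}\,\upsilon^{(k+1)}=0,
\end{equation*}
with $\nabla P^{(k+1)} = \nabla(-\Delta)^{-1}\mathrm{div}\bigl((\upsilon^{(k)}\cdot\nabla)\upsilon^{(k+1)}\bigr)$ and suitably truncated data. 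Applying $\Delta_j$ to the equation, estimating the $M^p_q$ norm of $\Delta_j\upsilon^{(k+1)}$ along the divergence-free drift, and bounding the commutator $[\Delta_j,\upsilon^{(k)}\cdot\nabla]\upsilon^{(k+1)}$ by the earlier Besov--Morrey commutator estimate, one obtains after $\ell^r$-summation with weight $2^{js}$
\begin{equation*}
\|\upsilon^{(k+1)}(t)\|_{N^s_{p,q,r}} \leq C\|\upsilon_0\|_{N^s_{p,q,r}} + C\int_0^t \|\nabla\upsilon^{(k)}(\tau)\|_{L^\infty}\,\|\upsilon^{(k+1)}(\tau)\|_{N^s_{p,q,r}}\,d\tau.
\end{equation*}
Under the assumption $s>1+n/p$ (or $s=1+n/p$, $r=1$) the embedding $N^s_{p,q,r}\hookrightarrow\mathrm{Lip}$ holds, so a standard bootstrap yields a uniform bound on a common interval $[0,T_1]$ with $T_1$ depending only on $\|\upsilon_0\|_{N^s_{p,q,r}}$.

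Convergence then follows by showing that $\{\upsilon^{(k)}\}$ is Cauchy in the weaker space $C([0,T_1];N^{s-1}_{p,q,r})$: the difference satisfies a linear transport-with-pressure equation whose right-hand side loses one derivative, so the product and commutator estimates close with a small factor of $T_1$. Uniqueness comes from the same energy-type estimate applied to the difference of two solutions; strong continuity in time with values in $N^s_{p,q,r}$ is recovered by regularizing the initial datum, obtaining continuity for the regularized solutions, and passing to the limit with Fatou. This completes part (1).

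For the blow-up criterion in part (2), the key ingredient is a logarithmic-type inequality adapted to the Besov--Morrey norm: in the supercritical range $s>1+n/p$,
\begin{equation*}
\|\nabla\upsilon\|_{L^\infty} \leq C\Bigl(1 + \|\nabla\times\upsilon\|_{\dot{B}^0_{\infty,\infty}}\log(e+\|\upsilon\|_{N^s_{p,q,r}})\Bigr),
\end{equation*}
whereas in the critical case $s=1+n/p$, $r=1$ one uses the log-free bound $\|\nabla\upsilon\|_{L^\infty}\leq C(1+\|\nabla\times\upsilon\|_{\dot{B}^0_{\infty,1}})$ together with Vishik's sharper linear transport estimate in $N^{1+n/p}_{p,q,1}$, which avoids the usual logarithmic loss. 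Inserting these into the a priori inequality and applying Gronwall (double exponential in the supercritical case, single exponential in the critical case) shows that the solution extends past $T_1$ as long as the vorticity integral remains finite, which yields the ``if'' direction. The converse follows from the embedding $N^s_{p,q,r}\hookrightarrow\dot{B}^{s-1}_{\infty,\infty}$ (respectively $\dot{B}^{s-1}_{\infty,1}$), giving $\|\omega(t)\|_{\dot{B}^0_{\infty,\infty}}\lesssim\|\upsilon(t)\|_{N^s_{p,q,r}}$, so that finiteness of the vorticity integral is a \emph{necessary} consequence of non-blowup.

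The hard part will be proving the logarithmic Sobolev-type estimate and Vishik's log-free transport estimate in the Morrey setting: the classical Besov arguments rely on dyadic $L^p$ manipulations that must be recast with the $M^p_q$ norm, using the boundedness of H\"ormander-type Fourier multipliers and Schwartz convolution operators on Morrey spaces. Once these auxiliary pieces are in place, the remainder of the argument is a routine reworking of the patterns of Beale--Kato--Majda, Kozono--Ogawa--Taniuchi, and Vishik in the Besov--Morrey framework.
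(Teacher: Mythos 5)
Your proposal follows essentially the same route as the paper: the same Picard iteration on the linearized system with drift $\upsilon^{(k)}$ and pressure $\nabla P^{(k+1)}=\nabla(-\Delta)^{-1}\mathrm{div}\bigl((\upsilon^{(k)}\cdot\nabla)\upsilon^{(k+1)}\bigr)$, $M^p_q$ estimates of $\dot{\Delta}_j\upsilon^{(k+1)}$ along the divergence-free particle trajectories combined with the Besov--Morrey commutator estimates, convergence and uniqueness in the weaker norm $N^{s-1}_{p,q,r}$, and the blow-up criterion via the logarithmic inequality in $\dot{B}^0_{\infty,\infty}$ (supercritical case), the bound $\|\nabla\upsilon\|_{L^\infty}\lesssim\|\omega\|_{\dot{B}^0_{\infty,1}}$ (critical case), and the converse embeddings. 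The only step you leave implicit is the solvability of the linearized system itself, which the paper proves separately (Proposition 4.1, Step 6) by a further approximation with linear transport equations; also, no Vishik-type refined transport estimate is actually needed in the critical case, since the commutator estimate already yields a bound linear in $\|\nabla\upsilon\|_{L^\infty}$.
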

\begin{rem}
Since $L^{p}(\mathbb{R}^{n})\subset
L^{p,\infty}(\mathbb{R}^{n})\subset M^{p}_{q}(\mathbb{R}^{n})$, we
have the continuous embeddings $B^{s}_{p,r}(\mathbb{R}^{n})\\
\hookrightarrow B^{s,\infty}_{p,r}(\mathbb{R}^{n})\hookrightarrow
N^s_{p,q,r}(\mathbb{R}^{n}).$ Therefore, the local existence result
contains the previous ones by Chae \cite{C2}, Takada \cite{TR} and
Zhou \cite{Z}. In addition, due to
$L^{\infty}(\mathbb{R}^{n})\hookrightarrow BMO(\mathbb{R}^{n})
\hookrightarrow
\dot{B}^{0}_{\infty,\infty}(\mathbb{R}^{n})=\dot{F}^{0}_{\infty,\infty}(\mathbb{R}^{n})$,
the blow-up criterion in Theorem \ref{thm1.1} can be regarded as an
improvement of the original Beale-Kato-Majda criterion \cite{BKM}
and a generalization of Chae \cite{C2}.
\end{rem}

For the MHD system (\ref{R-E2}), we have the similar result.
\begin{thm}\label{thm1.2}(1)(Local-time existence)
Let $1<q\leq p<\infty.$ Assume that $s$ and $r$ satisfy $s>1+n/p,\
1\leq r\leq\infty$\ or\ $s=1+n/p,\ r=1$. Suppose that the initial
data $(\upsilon_{0},b_{0})\in N^s_{p,q,r}$ satisfying
$\mathrm{div}\upsilon_{0}=0$ and $\mathrm{div}b_{0}=0$. Then
there exist $T_{2}>0$ and a unique solution $(\upsilon,b)$ of (\ref{R-E2}) such that $(\upsilon,b)\in C([0,T_{2}],N^s_{p,q,r})$.\\
(2)(Blow-up criterion)
\begin{itemize}
\item[(i)] Let $s>1+n/p$ and\ $1\leq r\leq\infty$. Then the local-in-time solution $(\upsilon,b)\in C([0,T_{1}],N^s_{p,q,r})$ blows up at $T_{*}>T_{2}$ in $N^s_{p,q,r}$, namely
    $$\limsup_{t\rightarrow T_{*}}\|(\upsilon,b)(t)\|_{N^s_{p,q,r}}=\infty$$
    if only if $\int^{T_{*}}_{0}(\|(\nabla\times\upsilon,\nabla\times b)(t)\|_{\dot{B}^{0}_{\infty,\infty}}dt=\infty;$
\item[(ii)] Let $s=1+n/p$ and \ $r=1$. Then the local-in-time solution $(\upsilon,b)\in C([0,T_{1}],N^{1+n/p}_{p,q,1})$ blows up at $T_{*}>T_{1}$ in $N^{1+n/p}_{p,q,1}$, namely
    $$\limsup_{t\rightarrow T_{*}}\|(\upsilon,b)(t)\|_{N^{1+n/p}_{p,q,1}}=\infty$$
    if only if $\int^{T_{*}}_{0}(\|(\nabla\times\upsilon,\nabla\times b)(t)\|_{\dot{B}^{0}_{\infty,1}}dt=\infty.$
\end{itemize}
\end{thm}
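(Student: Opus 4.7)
The plan is to mirror the Euler argument of Theorem \ref{thm1.1} by treating the pair $U=(v,b)$ as a single unknown and exploiting the skew-symmetry of the magnetic coupling. First I would set up an iterative scheme: take $U^{0}=S_{0}U_{0}$ (a low-frequency truncation) and, given a divergence-free $U^{n}=(v^{n},b^{n})$, define $U^{n+1}=(v^{n+1},b^{n+1})$ as the solution of the linear transport system
$$\partial_{t}v^{n+1}+(v^{n}\cdot\nabla)v^{n+1}-(b^{n}\cdot\nabla)b^{n+1}+\nabla\Pi^{n+1}=0,$$
$$\partial_{t}b^{n+1}+(v^{n}\cdot\nabla)b^{n+1}-(b^{n}\cdot\nabla)v^{n+1}=0,$$
with $\mathrm{div}\,v^{n+1}=\mathrm{div}\,b^{n+1}=0$ and initial data $U_{0}$. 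The total pressure is recovered from the elliptic equation $-\Delta\Pi^{n+1}=\partial_{i}\partial_{j}(v^{n}_{i}v^{n+1}_{j}-b^{n}_{i}b^{n+1}_{j})$, so $\nabla\Pi^{n+1}$ is a Calder\'on--Zygmund type operator applied to the quadratic source; its boundedness on $N^{s}_{p,q,r}$, equivalent to the boundedness of Riesz transforms on the Morrey space $M^{p}_{q}$ with $1<q\le p<\infty$, is precisely what forces the hypothesis on $p,q$.

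For the uniform estimate I apply $\Delta_{j}$ to each equation, producing commutator remainders $[\Delta_{j},v^{n}\cdot\nabla]v^{n+1}$, $[\Delta_{j},b^{n}\cdot\nabla]b^{n+1}$, and their analogues in the $b$-equation. The Morrey-valued transport estimate (carried out ball-by-ball, using $\mathrm{div}\,v^{n}=0$) combined with the Besov--Morrey commutator lemmas established earlier in the paper yields, after multiplying by $2^{js}$ and taking the $\ell^{r}$-norm in $j$,
$$\|U^{n+1}(t)\|_{N^{s}_{p,q,r}}\le\|U_{0}\|_{N^{s}_{p,q,r}}+C\int_{0}^{t}\!\!\big(\|\nabla v^{n}\|_{L^{\infty}}+\|\nabla b^{n}\|_{L^{\infty}}\big)\|U^{n+1}\|_{N^{s}_{p,q,r}}\,ds+(\mbox{lower order}).$$
A Gronwall argument on a short interval $[0,T_{2}]$ with $T_{2}=T_{2}(\|U_{0}\|_{N^{s}_{p,q,r}})$ then gives a uniform bound. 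A subtle point is that the cross terms $-(b\cdot\nabla)b$ and $-(b\cdot\nabla)v$ must be handled jointly, so that their top-order contributions combine into commutators rather than producing net growth. Convergence of $(U^{n})$ is obtained by estimating $U^{n+1}-U^{n}$ in the one-derivative-weaker norm $N^{s-1}_{p,q,r}$; the same commutator estimates close without derivative loss and deliver contraction on a possibly smaller interval. Uniqueness follows by applying this difference estimate to two solutions, and time continuity $(v,b)\in C([0,T_{2}],N^{s}_{p,q,r})$ comes from a standard approximation/continuity argument for the transport equation.

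For part (2) I would run the same a priori estimate on the nonlinear solution and replace $\|\nabla v\|_{L^{\infty}}+\|\nabla b\|_{L^{\infty}}$ by the logarithmic interpolation
$$\|\nabla v\|_{L^{\infty}}+\|\nabla b\|_{L^{\infty}}\lesssim 1+\big(\|\nabla\times v\|_{\dot{B}^{0}_{\infty,\infty}}+\|\nabla\times b\|_{\dot{B}^{0}_{\infty,\infty}}\big)\log\big(e+\|(v,b)\|_{N^{s}_{p,q,r}}\big)$$
in the supercritical case $s>1+n/p$, and by the direct embedding bound $\|\nabla v\|_{L^{\infty}}+\|\nabla b\|_{L^{\infty}}\lesssim \|\nabla\times v\|_{\dot{B}^{0}_{\infty,1}}+\|\nabla\times b\|_{\dot{B}^{0}_{\infty,1}}$ in the critical case $s=1+n/p,\ r=1$. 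An Osgood (respectively Gronwall) inequality then shows that finiteness of $\int_{0}^{T_{*}}\!(\|\nabla\times v\|_{\dot{B}^{0}_{\infty,\infty}}+\|\nabla\times b\|_{\dot{B}^{0}_{\infty,\infty}})\,dt$ precludes blow-up of $\|(v,b)\|_{N^{s}_{p,q,r}}$ at $T_{*}$, giving the contrapositive. I expect the hardest technical step to be the transport estimate in the Morrey norm: because $M^{p}_{q}$ is defined by a supremum over balls rather than a single integral, one cannot directly test the localized equation against $|\Delta_{j}v|^{q-2}\Delta_{j}v$ as in the $L^{p}$ setting; instead the estimate must be performed at the level of each Morrey ball (or by duality with a Morrey predual), and the divergence-free condition on $v^{n}$ has to be used to eliminate the boundary term so that the final Gronwall factor is $\|\nabla v^{n}\|_{L^{\infty}}$ rather than $\|U^{n}\|_{N^{s}_{p,q,r}}$, which is what allows the whole scheme to close at the critical regularity.
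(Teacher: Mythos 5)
Your overall scheme (linearized iteration, Riesz-transform treatment of $\nabla\Pi$, uniform bound plus contraction in $N^{s-1}_{p,q,r}$, logarithmic inequality in the supercritical case and the $\dot{B}^{0}_{\infty,1}$ embedding in the critical case) matches the paper's Section 5, but there is a genuine gap at the one point the paper identifies as the crux: how the first-order magnetic coupling terms are absorbed. After applying $\dot{\Delta}_{j}$ to your linear system, the $v$-equation still contains $(b^{m}\cdot\nabla)\dot{\Delta}_{j}b^{m+1}$ and the $b$-equation contains $(b^{m}\cdot\nabla)\dot{\Delta}_{j}v^{m+1}$; these are first order in the \emph{new} iterate, so if they are estimated as sources they cost $\|b^{m}\|_{L^{\infty}}\|b^{m+1}\|_{\dot{N}^{s+1}_{p,q,r}}$, i.e.\ a full derivative, and the scheme does not close. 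Your stated remedy --- that by ``skew-symmetry'' the ``top-order contributions combine into commutators'' when the two equations are handled jointly --- has no mechanism in this setting: the skew-symmetry cancellation is an $L^{2}$ integration-by-parts phenomenon, and the Besov--Morrey norm ($M^{p}_{q}$ base, $\ell^{r}$ in $j$) admits neither an inner product nor a ball-by-ball or predual substitute that reproduces it; no commutator of the form $[b^{m}\cdot\nabla,\dot{\Delta}_{j}]$ can be extracted from a cross term acting on a \emph{different} unknown within the same equation. The paper's actual device is to diagonalize: add and subtract the two equations so that $\dot{\Delta}_{j}(v^{m+1}+b^{m+1})$ is transported by $(v^{m}-b^{m})\cdot\nabla$ and $\dot{\Delta}_{j}(v^{m+1}-b^{m+1})$ by $(v^{m}+b^{m})\cdot\nabla$ (Els\"asser-type combinations), and then integrate along the particle-trajectory maps $Y^{m}$, $Z^{m}$ generated by $v^{m}\mp b^{m}$, which are volume preserving because $\mathrm{div}(v^{m}\mp b^{m})=0$; Lemma \ref{lem3.1} then conserves the $M^{p}_{q}$ norm along these flows, and only genuine commutators $[v^{m}\cdot\nabla,\dot{\Delta}_{j}]$, $[b^{m}\cdot\nabla,\dot{\Delta}_{j}]$ remain, to which Lemmas \ref{lem3.4}--\ref{lem3.5} apply. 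Without this diagonalization step your displayed uniform estimate with Gronwall factor $\|\nabla v^{n}\|_{L^{\infty}}+\|\nabla b^{n}\|_{L^{\infty}}$ is unjustified.

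A second, smaller point: your concern about testing against $|\Delta_{j}v|^{q-2}\Delta_{j}v$ and working ``ball-by-ball or by Morrey preduality'' is not how the transport estimate is closed here; the paper bypasses any duality entirely by the method of characteristics plus the change-of-variables identity of Lemma \ref{lem3.1}, which is also what makes the $M^{p}_{q}$ setting essentially painless once the system has been reduced to pure transport equations. The remaining ingredients you list (pressure via Riesz transforms and the product estimates of Lemma \ref{lem3.3}, difference estimates one derivative lower, and the blow-up dichotomy via Lemma \ref{lem3.2} and the embeddings) do coincide with the paper's argument.
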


\begin{rem}
In the proof of Theorems \ref{thm1.1}-\ref{thm1.2}, inspired by
\cite{CMZ,TR}, we introduce the following particle trajectory
mappings
\begin{equation}
\left\{
\begin{array}{l}
\partial_{t}X(\alpha,t)=\upsilon(X(\alpha,t),t),\\
X(\alpha,0)=\alpha,
\end{array} \right.\label{R-E3}
\end{equation}
\begin{equation}
\left\{
\begin{array}{l}
\partial_{t}Y(\alpha,t)=(\upsilon-b)(Y(\alpha,t),t),\\
Y(\alpha,0)=\alpha,
\end{array} \right.\label{R-E4}
\end{equation}
and \begin{equation} \left\{
\begin{array}{l}
\partial_{t}Z(\alpha,t)=(\upsilon+b)(Z(\alpha,t),t),\\
Z(\alpha,0)=\alpha
\end{array} \right.\label{R-E5}
\end{equation}
to estimate the frequency-localization solutions to the Euler
equations (\ref{R-E1}) and MHD system (\ref{R-E2}) in the
$M^{p}_{q}(\mathbb{R}^{n})$ space, respectively. It is worth noting
that we handle with the coupling effect of the velocity field
$\upsilon(x,t)$ and the magnetic field $b(x,t)$ in (\ref{R-E2})
effectively by (\ref{R-E4})-(\ref{R-E5}).

Secondly, to deal with frequency-localized nonlinear terms, we
develop new commutator estimates in the weaker Besov-Morrey space
$\dot{N}^{s}_{p,q,r}$ by the Bony's para-product formula. In
addition, we should mention the recent preprint \cite{T}, where he
has announced the partial content (the sup-critical case) of Theorem
\ref{thm1.1}, however, the proof of local existence was not
available. In fact, by the careful investigation, we think the proof
is not obvious. More concretely speaking, according to the work
\cite{Z} by the second author, we adopt some revised approximate
iteration systems instead of that in \cite{C1} to construct the
local existence of solutions of (\ref{R-E1}) and (\ref{R-E2}).
\end{rem}

\begin{rem}
Theorems \ref{thm1.1}-\ref{thm1.2} can be regarded as the supplements on the local existence theory of the Euler equations and ideal MHD system. However,
the \textit{global} existence of solutions to the 2-dimension case in the framework of Besov-Morrey spaces still remains unsolvable, which is our next consideration.
\end{rem}

At the end of Introduction, we also mention other lines of recent
study for incompressible Euler equations and MHD system, such as
\cite{D,DF} and \cite{Z2,ZXF}, where they considered the local
well-posedness of density-dependent Euler equations and MHD system
in several space dimensions.

The rest of this paper unfolds as follows. In Section~\ref{sec:2},
we briefly review some basic properties of Besov-Morrey spaces. In
Section~\ref{sec:3}, we give some key lemmas. In particular, we
develop new estimates of commutator in Besov-Morrey spaces, which
play important roles in the proof of our main theorems.
Section~\ref{sec:4} is devoted to the total proof of Theorem
\ref{thm1.1}. Finally in Section~\ref{sec:5}, we prove the local
well-posedness of MHD system. For brevity, we give the approximate
linear system and crucial estimates only.

\section{Preliminary}\label{sec:2}
\setcounter{equation}{0} Throughout the paper, $f\lesssim g$ denotes
$f\leq Cg$, where $C>0$ is a generic constant. $f\thickapprox g$
means $f\lesssim g$ and $g\lesssim f$. In this section, we present
some properties in Besov-Morrey spaces defined in Sect.~\ref{sec:1}
by using the Littlewood-Paley dyadic decomposition. Indeed,
Besov-Morrey spaces share many of the properties of Besov spaces,
but they represent local oscillations and singularities of functions
more precisely. For more details, please refer to \cite{KY,M}.

First, we recall the Bernstein's inequality for $M^{p}_{q}$ as for
the case of $L^{p}$ spaces.
\begin{lem}\label{lem2.1}
Assume that $f\in M^{p}_{q}(\mathbb{R}^{n})$ with $1\leq q\leq
p\leq\infty$ and $\mathrm{supp}\mathcal{F}[f]\subset
\{2^{j-1}\leq|\xi|<2^{j+1}\},$ then there exists a constant $C_{k}$
such that the following inequalities holds:
$$C^{-1}_{k}2^{jk}\|f\|_{M^{p}_{q}}\leq\|D^{k}f\|_{M^{p}_{q}}\leq C_{k}2^{jk}\|f\|_{M^{p}_{q}}, \ \ \mbox{for all}\ \ k\in \mathbb{N},$$
where $\mathcal{F}[f]$ denotes the usual Fourier transform of $f$.
\end{lem}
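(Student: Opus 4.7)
The plan is to mimic the classical Bernstein argument for $L^p$ spaces, with the only genuinely new ingredient being a convolution (Young-type) inequality for the Morrey space $M^p_q$. The core calculation is to reduce $D^k f$ to a convolution $\phi_j \ast f$ with an $L^1$ kernel whose norm carries the correct power of $2^j$.

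First I would establish the auxiliary Young-type bound
\[
\|g\ast f\|_{M^p_q}\leq \|g\|_{L^1}\,\|f\|_{M^p_q},\qquad g\in L^1(\mathbb{R}^n),\ f\in M^p_q(\mathbb{R}^n).
\]
This follows directly from Definition 1.1 by applying Minkowski's integral inequality inside the local $L^q$ norm: for every ball $B(x_0,r)$,
\[
r^{n/p-n/q}\Bigl(\!\int_{B(x_0,r)}|g\ast f(y)|^q dy\Bigr)^{1/q}\!\!\leq\!\int_{\mathbb{R}^n}\!|g(z)|\,r^{n/p-n/q}\Bigl(\!\int_{B(x_0,r)}|f(y-z)|^q dy\Bigr)^{1/q}\!dz,
\]
and then using translation invariance of the Morrey norm to bound the inner factor by $\|f\|_{M^p_q}$, uniformly in $z$. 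Taking $\sup_{x_0,r}$ gives the claim.

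Next, for the upper bound, I would pick $\phi\in\mathcal{S}(\mathbb{R}^n)$ with $\widehat{\phi}$ compactly supported in the annulus $\{1/4\leq|\xi|\leq 4\}$ and $\widehat{\phi}\equiv 1$ on $\{1/2\leq|\xi|\leq 2\}$. Because $\mathrm{supp}\,\widehat{f}\subset\{2^{j-1}\leq|\xi|<2^{j+1}\}$, setting $\phi_j(x):=2^{jn}\phi(2^j x)$ yields $f=\phi_j\ast f$, hence
\[
D^k f=(D^k\phi_j)\ast f.
\]
A change of variables shows $\|D^k\phi_j\|_{L^1}=2^{jk}\|D^k\phi\|_{L^1}$, and combining with the Morrey-Young inequality above produces the upper bound $\|D^k f\|_{M^p_q}\leq C_k 2^{jk}\|f\|_{M^p_q}$.

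For the lower bound, I would choose $\psi\in\mathcal{S}(\mathbb{R}^n)$ whose Fourier transform is a smooth, compactly supported function equal to $|\xi|^{-k}$ on the annulus $\{1/2\leq|\xi|\leq 2\}$ (for the homogeneous case; for partial derivatives $\partial^\alpha$ with $|\alpha|=k$ one replaces $|\xi|^{-k}$ by $(i\xi)^{-\alpha}$ on the appropriate region, after summing over $|\alpha|=k$). Then $\widehat{\psi}(2^{-j}\xi)\,(i\xi)^\alpha$ reproduces $\widehat{\xi^\alpha}$-multiplication on the support of $\widehat{f}$, so $f=c\,\psi_j\ast D^k f$ with $\|\psi_j\|_{L^1}=2^{-jk}\|\psi\|_{L^1}$, and the Morrey-Young inequality again yields $\|f\|_{M^p_q}\leq C_k 2^{-jk}\|D^k f\|_{M^p_q}$.

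The only potentially subtle point is the Morrey-Young inequality itself; once that is in hand, the argument is parallel to the classical $L^p$ case and the scaling is purely dimensional. I expect no obstacle beyond being careful with the multi-index interpretation of $D^k$ when inverting on the frequency annulus in the lower bound.
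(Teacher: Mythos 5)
Your proposal is correct, and it supplies exactly the argument the paper leaves implicit: Lemma \ref{lem2.1} is only \emph{recalled} there (from the Kozono--Yamazaki/Mazzucato literature), and the sole ingredient the paper records afterwards is the same Morrey--Young inequality $\|\phi\ast f\|_{M^{p}_{q}}\leq C\|\phi\|_{L^{1}}\|f\|_{M^{p}_{q}}$ that you prove via Minkowski's integral inequality and translation invariance of the Morrey norm; your reproducing-kernel argument on the frequency annulus then runs exactly as in the classical $L^{p}$ Bernstein inequality, so the approach is the standard one. One caveat on the lower bound: as literally written, replacing $|\xi|^{-k}$ by $(i\xi)^{-\alpha}$ does not work, since $\xi^{\alpha}$ vanishes on part of the annulus and so $(i\xi)^{-\alpha}$ is not an admissible (smooth, bounded) symbol there. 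If $D^{k}$ is read as the full family of derivatives of order $k$ rather than $|D|^{k}$, the standard fix is the identity $\sum_{|\alpha|=k}\frac{k!}{\alpha!}\,\xi^{2\alpha}=|\xi|^{2k}$: with a cutoff $\chi$ equal to $1$ on the annulus one sets $\widehat{\psi_{\alpha,j}}(\xi)=\chi(2^{-j}\xi)\frac{k!}{\alpha!}\frac{(-i\xi)^{\alpha}}{|\xi|^{2k}}$, so that $f=\sum_{|\alpha|=k}\psi_{\alpha,j}\ast\partial^{\alpha}f$ with $\|\psi_{\alpha,j}\|_{L^{1}}\leq C_{k}2^{-jk}$, and your Morrey--Young inequality finishes the estimate; this is a small repair, not a change of method. (Also note the harmless bookkeeping point that with the usual normalization $\psi_{j}=2^{jn}\psi(2^{j}\cdot)$ one has $\|\psi_{j}\|_{L^{1}}=\|\psi\|_{L^{1}}$, the factor $2^{-jk}$ entering instead through the symbol $\widehat{\psi}(2^{-j}\xi)=2^{jk}|\xi|^{-k}$ on the support of $\widehat{f}$; the final bound is the same.)
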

In the Morrey space $M^{p}_{q}$, since
$$\|\phi\ast f\|_{M^{p}_{q}}\leq C\|\phi\|_{L^1}\|f\|_{M^{p}_{q}}$$
holds for $1\leq q\leq p\leq\infty$, we have the immediate
consequence of Lemma \ref{lem2.1}.
\begin{lem}\label{lem2.2}
For $s\in \mathbb{R}, 1\leq q\leq p\leq\infty, 1\leq r\leq\infty,
k\in \mathbb{N}$, and $\mathrm{supp}\mathcal{F}[f]\subset
\{2^{j-1}\leq|\xi|<2^{j+1}\},$ there exists a constant $C_{k}$ such
that the following inequality holds:
$$C^{-1}_{k}\|D^{k}f\|_{\dot{N}^{s}_{p,q,r}}\leq\|f\|_{\dot{N}^{s+k}_{p,q,r}}\leq C_{k}\|D^{k}f\|_{\dot{N}^{s}_{p,q,r}}.$$
\end{lem}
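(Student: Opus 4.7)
The plan is to prove this two-sided Bernstein-type estimate at the Besov-Morrey level by applying the $M^p_q$-Bernstein inequality (Lemma \ref{lem2.1}) to each dyadic block $\dot{\Delta}_j f$ and then taking the weighted $\ell^r(\mathbb{Z})$ norm in $j$. The Fourier-support hypothesis is to be read in the usual Littlewood-Paley sense: for every $j\in\mathbb{Z}$ the block $\dot{\Delta}_j f$ is spectrally supported in the annulus $\{2^{j-1}\leq|\xi|<2^{j+1}\}$, so Lemma \ref{lem2.1} applies to all blocks uniformly.

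First I would use that $D^k$ is a Fourier multiplier and therefore commutes with the Littlewood-Paley projector, i.e.\ $\dot{\Delta}_j(D^k f)=D^k(\dot{\Delta}_j f)$ for every $j\in\mathbb{Z}$. Since $\dot{\Delta}_j f$ is frequency-localized in $\{2^{j-1}\leq|\xi|<2^{j+1}\}$, applying Lemma \ref{lem2.1} block by block yields
$$C_k^{-1}2^{jk}\|\dot{\Delta}_j f\|_{M^p_q}\leq\|\dot{\Delta}_j(D^k f)\|_{M^p_q}\leq C_k 2^{jk}\|\dot{\Delta}_j f\|_{M^p_q}$$
with a constant $C_k$ independent of $j$. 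Multiplying by $2^{js}$, using the identity $2^{js}\cdot 2^{jk}=2^{j(s+k)}$, and taking the $\ell^r(\mathbb{Z})$-norm (or the supremum when $r=\infty$) of each side in $j$ gives
$$C_k^{-1}\|f\|_{\dot{N}^{s+k}_{p,q,r}}\leq\|D^k f\|_{\dot{N}^s_{p,q,r}}\leq C_k\|f\|_{\dot{N}^{s+k}_{p,q,r}},$$
which is exactly the claimed two-sided estimate after relabeling the constant.

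No step presents any serious obstacle: the entire content of the lemma is the Morrey Bernstein inequality (already proved in Lemma \ref{lem2.1}) lifted to the Besov-Morrey scale through Littlewood-Paley projections. The only subtleties worth flagging are that the constant in Lemma \ref{lem2.1} must be genuinely uniform in $j$, which follows by the standard rescaling argument already implicit in its statement, and that the commutation $D^k\dot{\Delta}_j=\dot{\Delta}_jD^k$ leaves no stray error term, so the estimate passes cleanly from each dyadic shell to the full Besov-Morrey norm.
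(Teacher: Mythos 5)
Your proposal is correct and follows essentially the same route as the paper, which states Lemma \ref{lem2.2} as an immediate blockwise consequence of the Morrey--Bernstein inequality of Lemma \ref{lem2.1} (together with the convolution bound $\|\phi\ast f\|_{M^p_q}\leq C\|\phi\|_{L^1}\|f\|_{M^p_q}$ underlying its reverse part): apply it to each $\dot{\Delta}_j f$, multiply by $2^{js}$, and take the $\ell^r$ norm. Your reading of the support hypothesis in the Littlewood--Paley sense and the commutation $D^k\dot{\Delta}_j=\dot{\Delta}_j D^k$ are exactly what the paper implicitly uses.
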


Next we investigate the relation between the homogeneous and
inhomogeneous Besov-Morrey spaces.
\begin{lem}\label{lem2.3}
For $s>0, 1\leq q\leq p\leq\infty, 1\leq r\leq\infty$, the following
relations hold:
$$N^{s}_{p,q,r}(\mathbb{R}^{n})=M^{p}_{q}(\mathbb{R}^{n})\cap\dot{N}^{s}_{p,q,r}(\mathbb{R}^{n}),$$
$$\|f\|_{N^{s}_{p,q,r}}\thicksim\|f\|_{M^{p}_{q}}+\|f\|_{\dot{N}^{s}_{p,q,r}}.$$
\end{lem}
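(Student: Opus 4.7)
The plan is to prove the two inclusions separately, reducing everything to block-level estimates and exploiting that, on high frequencies, the inhomogeneous and homogeneous Littlewood-Paley blocks literally coincide. Concretely, from the definitions of $\varphi_{(0)}$ and $\varphi_{j}$ one checks that $\Delta_{j}f=\dot{\Delta}_{j}f$ for every $j\geq 0$, while $\Delta_{-1}f$ is the low-frequency part collecting all modes with $|\xi|\lesssim 1$. I will use this identification systematically, together with the Bernstein/convolution estimate $\|\phi\ast f\|_{M^{p}_{q}}\lesssim\|\phi\|_{L^{1}}\|f\|_{M^{p}_{q}}$ already available from Lemma~\ref{lem2.1}.

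For the inclusion $N^{s}_{p,q,r}\hookrightarrow M^{p}_{q}\cap \dot{N}^{s}_{p,q,r}$, I would start by writing $f=\Delta_{-1}f+\sum_{j\geq 0}\Delta_{j}f$ in $\mathcal{S}'$ and estimating
\[
\|f\|_{M^{p}_{q}}\leq \|\Delta_{-1}f\|_{M^{p}_{q}}+\sum_{j\geq 0}2^{-js}\bigl(2^{js}\|\Delta_{j}f\|_{M^{p}_{q}}\bigr).
\]
Since $s>0$, H\"older's inequality in $\ell^{r}$ absorbs the geometric factor $(2^{-js})_{j\geq 0}\in\ell^{r'}$, giving $\|f\|_{M^{p}_{q}}\lesssim\|f\|_{N^{s}_{p,q,r}}$. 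For the homogeneous norm, the part $j\geq 0$ is immediate because $\dot{\Delta}_{j}f=\Delta_{j}f$. For $j\leq -1$, the Fourier support of $\dot{\Delta}_{j}f$ lies inside $\{|\xi|\leq 2^{j+1}\leq 1\}$, on which $\mathcal{F}[\varphi_{(0)}]\equiv 1$, so $\dot{\Delta}_{j}f=\dot{\Delta}_{j}\Delta_{-1}f$ and hence $\|\dot{\Delta}_{j}f\|_{M^{p}_{q}}\lesssim \|\Delta_{-1}f\|_{M^{p}_{q}}$ by the convolution bound. Summing with the weight $2^{js}$ gives a convergent geometric series (using $s>0$ again), and altogether $\|f\|_{\dot{N}^{s}_{p,q,r}}\lesssim\|f\|_{N^{s}_{p,q,r}}$.

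For the reverse inclusion $M^{p}_{q}\cap \dot{N}^{s}_{p,q,r}\hookrightarrow N^{s}_{p,q,r}$, I would simply observe
\[
\|f\|_{N^{s}_{p,q,r}}^{r}=\|\Delta_{-1}f\|_{M^{p}_{q}}^{r}+\sum_{j\geq 0}\bigl(2^{js}\|\Delta_{j}f\|_{M^{p}_{q}}\bigr)^{r},
\]
bound $\|\Delta_{-1}f\|_{M^{p}_{q}}\lesssim \|f\|_{M^{p}_{q}}$ again by the convolution inequality, and replace $\Delta_{j}f$ by $\dot{\Delta}_{j}f$ for $j\geq 0$ to get exactly $\|f\|_{\dot{N}^{s}_{p,q,r}}^{r}$ (up to the missing negative modes, which only helps). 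The case $r=\infty$ is handled identically with $\sup$ replacing the $\ell^{r}$-sum.

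The only real subtlety I anticipate is the usual one when comparing $\mathcal{S}'$-valued and $\mathcal{S}'/\mathcal{P}$-valued objects: one has to check that the representative $f\in M^{p}_{q}$ and its class modulo polynomials match up, but this follows because $f\in M^{p}_{q}$ forbids the presence of a non-trivial polynomial in the reconstruction, so $f=\sum_{j\in\mathbb{Z}}\dot{\Delta}_{j}f$ holds in $\mathcal{S}'$ directly. Apart from this bookkeeping point, everything is geometric-series arithmetic on the Littlewood-Paley blocks, and the proof is essentially the same as for the classical Besov case, with Lemma~\ref{lem2.1} playing the role of the $L^{p}$ Bernstein inequality.
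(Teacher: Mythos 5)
Your proof is correct, and it takes the route the paper itself intends: the paper does not prove Lemma~\ref{lem2.3}, calling it standard and citing \cite{BL}, and your argument is precisely that standard high/low-frequency splitting (using $\Delta_{j}f=\dot{\Delta}_{j}f$ for $j\geq 0$, controlling the modes $j\leq -1$ by $\dot{\Delta}_{j}f=\dot{\Delta}_{j}\Delta_{-1}f$ and the $L^{1}$-convolution bound, and summing geometric series thanks to $s>0$) transplanted from the Besov to the Besov--Morrey setting. The only microscopic inaccuracy is the closing remark that $f\in M^{p}_{q}$ excludes every non-trivial polynomial (constants do lie in $M^{\infty}_{q}=L^{\infty}$), but that aside is not actually used anywhere in the argument, which needs only the inhomogeneous reconstruction $f=\sum_{j\geq-1}\Delta_{j}f$.
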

The proof of Lemma \ref{lem2.3} is standard, see \cite{BL} for the
similar details. From \cite{KY}, we have the following Sobolev-type
embedding lemma.
\begin{lem}\label{lem2.4}
For $s>0, 1\leq q\leq p\leq\infty, 1\leq r\leq\infty$, then
$$\dot{N}^{s}_{p,q,r}(\mathbb{R}^{n})\hookrightarrow \dot{B}^{s-n/p}_{\infty,r}(\mathbb{R}^{n}),\ \ \ N^{s}_{p,q,r}(\mathbb{R}^{n})\hookrightarrow B^{s-n/p}_{\infty,r}(\mathbb{R}^{n}).$$
\end{lem}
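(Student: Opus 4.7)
\medskip

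\noindent\textbf{Proof proposal for Lemma \ref{lem2.4}.} The plan is to reduce the embedding to a single frequency-localized inequality of Bernstein type, namely: if $f\in\mathcal{S}'$ satisfies $\mathrm{supp}\,\mathcal{F}[f]\subset\{|\xi|\sim 2^{j}\}$, then
$$\|f\|_{L^{\infty}}\lesssim 2^{jn/p}\|f\|_{M^{p}_{q}}.\qquad(\ast)$$
Once $(\ast)$ is in hand, I would apply it to each Littlewood-Paley block $\dot{\Delta}_{j}f$ (which is supported in an annulus of size $2^{j}$), multiply by $2^{j(s-n/p)}$, and take the $\ell^{r}$-norm in $j$:
$$\|f\|_{\dot{B}^{s-n/p}_{\infty,r}}=\bigl\|2^{j(s-n/p)}\|\dot{\Delta}_{j}f\|_{L^{\infty}}\bigr\|_{\ell^{r}(\mathbb{Z})}\lesssim \bigl\|2^{js}\|\dot{\Delta}_{j}f\|_{M^{p}_{q}}\bigr\|_{\ell^{r}(\mathbb{Z})}=\|f\|_{\dot{N}^{s}_{p,q,r}}.$$
For the inhomogeneous case one sums over $j\geq -1$ instead, using the analogous statement for $\Delta_{-1}$ (which is a convolution with a fixed Schwartz function and so obeys $\|\Delta_{-1}f\|_{L^{\infty}}\lesssim \|f\|_{M^{p}_{q}}$ by the same argument with $j=0$).

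The main obstacle is proving $(\ast)$; this is the real content of the lemma, since the usual Bernstein inequality gives only $\|f\|_{L^{\infty}}\lesssim 2^{jn/p}\|f\|_{L^{p}}$ and we need to replace $L^{p}$ by the larger space $M^{p}_{q}$. My plan is to write $f=\psi_{j}\ast f$ where $\psi_{j}(x)=2^{jn}\psi(2^{j}x)$ for some fixed Schwartz function $\psi$ equal to $1$ on the annulus supporting $\mathcal{F}[f]$, and then split the convolution integral at a point $x\in\mathbb{R}^{n}$ into the ball $B(x,2^{-j})$ and the dyadic shells $A_{k}=B(x,2^{k-j})\setminus B(x,2^{k-j-1})$ for $k\geq 1$. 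On the central ball I would use the crude bound $|\psi_{j}|\lesssim 2^{jn}$ together with Hölder's inequality and the Morrey definition $\bigl(\int_{B(x,r)}|f|^{q}\bigr)^{1/q}\leq r^{n/q-n/p}\|f\|_{M^{p}_{q}}$ to obtain a contribution of order $2^{jn/p}\|f\|_{M^{p}_{q}}$. On the shell $A_{k}$ I would exploit the Schwartz decay $|\psi_{j}(x-y)|\lesssim 2^{jn}(1+2^{j}|x-y|)^{-N}\lesssim 2^{jn}2^{-kN}$; combined with Hölder on $A_{k}$ and the Morrey estimate on $B(x,2^{k-j})$, each shell contributes $2^{jn/p}2^{k(n-n/p-N)}\|f\|_{M^{p}_{q}}$.

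Choosing $N$ large enough (any $N>n-n/p$ works) makes the geometric series over $k\geq 1$ summable, yielding $(\ast)$ uniformly in $x$ and $j$. The homogeneous half of the lemma then follows as indicated above. For the inhomogeneous embedding $N^{s}_{p,q,r}\hookrightarrow B^{s-n/p}_{\infty,r}$, the only additional point is to treat the low-frequency block $\Delta_{-1}f$ separately via Young's inequality in Morrey spaces (as already noted after Lemma \ref{lem2.1}), and to use Lemma \ref{lem2.3} to compare the inhomogeneous norm with $\|f\|_{M^{p}_{q}}+\|f\|_{\dot{N}^{s}_{p,q,r}}$; the remaining high-frequency part is handled by the homogeneous statement just proved.
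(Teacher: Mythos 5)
Your argument is correct, but note that the paper does not prove Lemma \ref{lem2.4} at all: it simply quotes the embedding from Kozono--Yamazaki \cite{KY}, so there is no in-paper proof to compare with, and what you supply is the standard self-contained derivation. The key step checks out: writing $f=\psi_j\ast f$, the central ball $B(x,2^{-j})$ contributes $2^{jn}\cdot 2^{-jn(1-1/q)}\cdot 2^{-j(n/q-n/p)}\|f\|_{M^p_q}=2^{jn/p}\|f\|_{M^p_q}$, and the shell $A_k$ contributes $2^{jn}2^{-kN}(2^{k-j})^{\,n-n/p}\|f\|_{M^p_q}=2^{jn/p}2^{k(n-n/p-N)}\|f\|_{M^p_q}$, which is summable in $k$ once $N>n-n/p$; hence your inequality $(\ast)$ holds uniformly in $x$ and $j$, and the homogeneous embedding follows blockwise exactly as you indicate. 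Your treatment of the low-frequency block is also sound, since the same unit-scale decomposition gives the Morrey analogue $\|\phi\ast f\|_{L^\infty}\lesssim\|f\|_{M^p_q}$ for a fixed Schwartz $\phi$. Two small refinements: you could instead bound $\|\Delta_{-1}f\|_{L^\infty}\lesssim\|\Delta_{-1}f\|_{M^p_q}$ directly (Fourier support in a ball, same argument with $j=0$), which makes the inhomogeneous case purely blockwise and shows that neither the hypothesis $s>0$ nor the appeal to Lemma \ref{lem2.3} is actually needed for this embedding; and when $p=\infty$ the claim is immediate because $M^{\infty}_{q}=L^{\infty}$, while your proof still covers that case since $N$ may be taken arbitrarily large.
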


Since the embedding relation $\dot{B}^{s}_{p,r}\hookrightarrow
L^{\infty}, B^{s}_{p,r}\hookrightarrow L^{\infty}$ hold for $s>n/p,
1\leq p,r\leq\infty,$ or $s=n/p, 1\leq p\leq\infty, r=1$, we obtain
the following conclusion from Lemma \ref{lem2.4}.
\begin{lem}\label{lem2.5}
Both spaces $\dot{N}^{s}_{p,q,r}(\mathbb{R}^{n})$ and
$N^{s}_{p,q,r}(\mathbb{R}^{n})$ are Banach algebras for $s>n/p,
1\leq q\leq p\leq\infty, r\in[1,\infty]$ or $s=n/p, 1\leq q\leq
p\leq\infty$ and $r=1$.
\end{lem}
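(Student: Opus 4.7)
The plan is to prove the algebra property via Bony's paraproduct decomposition, working in the homogeneous space $\dot{N}^{s}_{p,q,r}$ first and then using Lemma \ref{lem2.3} to transfer the conclusion to the inhomogeneous space. First, I would record the basic Hölder-type inequality in Morrey space,
\[
\|fg\|_{M^{p}_{q}}\;\lesssim\;\|f\|_{L^{\infty}}\,\|g\|_{M^{p}_{q}},
\]
which is immediate from the definition, together with the $L^{\infty}$ embedding obtained by chaining Lemma \ref{lem2.4} with the classical Besov embedding: the hypotheses $s>n/p$, $r\in[1,\infty]$ (or $s=n/p$, $r=1$) guarantee $\dot{N}^{s}_{p,q,r}\hookrightarrow \dot{B}^{s-n/p}_{\infty,r}\hookrightarrow L^{\infty}$, and the analogous statement in the inhomogeneous scale.

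Next, for $f,g\in \dot{N}^{s}_{p,q,r}$, I would decompose
\[
fg \;=\; T_{f}g + T_{g}f + R(f,g),
\]
where $T_{f}g=\sum_{k}\dot{S}_{k-1}f\,\dot{\Delta}_{k}g$ is the paraproduct and $R(f,g)=\sum_{k}\dot{\Delta}_{k}f\,\widetilde{\dot{\Delta}}_{k}g$ is the remainder term, with $\widetilde{\dot{\Delta}}_{k}=\dot{\Delta}_{k-1}+\dot{\Delta}_{k}+\dot{\Delta}_{k+1}$. For $T_{f}g$, the Fourier support of $\dot{S}_{k-1}f\,\dot{\Delta}_{k}g$ lies in an annulus of size $\sim 2^{k}$, so $\dot{\Delta}_{j}(T_{f}g)$ only receives contributions from $|k-j|\leq N_{0}$. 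Combining the Hölder inequality above with the uniform bound $\|\dot{S}_{k-1}f\|_{L^{\infty}}\lesssim \|f\|_{L^{\infty}}$ yields
\[
2^{js}\|\dot{\Delta}_{j}(T_{f}g)\|_{M^{p}_{q}}\;\lesssim\;\|f\|_{L^{\infty}}\sum_{|k-j|\leq N_{0}}2^{ks}\|\dot{\Delta}_{k}g\|_{M^{p}_{q}},
\]
and Young's inequality for $\ell^{r}$ convolution then gives $\|T_{f}g\|_{\dot{N}^{s}_{p,q,r}}\lesssim \|f\|_{L^{\infty}}\|g\|_{\dot{N}^{s}_{p,q,r}}\lesssim \|f\|_{\dot{N}^{s}_{p,q,r}}\|g\|_{\dot{N}^{s}_{p,q,r}}$. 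The symmetric estimate handles $T_{g}f$.

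The remainder $R(f,g)$ is the delicate piece. Here $\dot{\Delta}_{k}f\,\widetilde{\dot{\Delta}}_{k}g$ has Fourier support in a ball of radius $\sim 2^{k}$, so only $k\geq j-N_{1}$ contribute to $\dot{\Delta}_{j}R(f,g)$. Using the Bernstein inequality from Lemma \ref{lem2.1} to absorb the outer $\dot{\Delta}_{j}$ (it is harmless on the norm) and applying the Morrey Hölder inequality,
\[
\|\dot{\Delta}_{j}R(f,g)\|_{M^{p}_{q}}\;\lesssim\;\sum_{k\geq j-N_{1}}\|\dot{\Delta}_{k}f\|_{L^{\infty}}\,\|\widetilde{\dot{\Delta}}_{k}g\|_{M^{p}_{q}}.
\]
Multiplying by $2^{js}$ produces a factor $2^{(j-k)s}$ on the right; since $s>0$ under both sets of hypotheses, this geometric factor renders the sum summable in $\ell^{r}_{j}$ by Young's inequality, and the resulting bound is $\|f\|_{L^{\infty}}\|g\|_{\dot{N}^{s}_{p,q,r}}$, which is again controlled by the product $\|f\|_{\dot{N}^{s}_{p,q,r}}\|g\|_{\dot{N}^{s}_{p,q,r}}$ via the embedding of the previous paragraph.

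Finally, for the inhomogeneous version I would apply Lemma \ref{lem2.3} to write $\|fg\|_{N^{s}_{p,q,r}}\sim \|fg\|_{M^{p}_{q}}+\|fg\|_{\dot{N}^{s}_{p,q,r}}$; the second summand is controlled by the homogeneous estimate just proved, and the first by $\|fg\|_{M^{p}_{q}}\leq \|f\|_{L^{\infty}}\|g\|_{M^{p}_{q}}\lesssim \|f\|_{N^{s}_{p,q,r}}\|g\|_{N^{s}_{p,q,r}}$. The main obstacle I anticipate is purely notational: keeping track of where the Morrey-Hölder inequality may be used and where one genuinely needs the $L^{\infty}$ embedding (the latter is exactly what forces the exponent condition on $s$). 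The remainder term in the endpoint case $s=n/p$, $r=1$ also requires a slightly more careful summation, since the geometric decay $2^{(j-k)s}$ is weaker; here one uses that the $\ell^{1}$ nature of the norm and Fubini suffice to close the estimate.
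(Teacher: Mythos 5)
Your overall route --- the Morrey--H\"older bound $\|fg\|_{M^{p}_{q}}\lesssim\|f\|_{L^{\infty}}\|g\|_{M^{p}_{q}}$, an $L^{\infty}$ embedding, Bony's decomposition, and Lemma \ref{lem2.3} to pass to the inhomogeneous space --- is exactly what the paper has in mind: the paper offers no more proof than ``the conclusion follows from Lemma \ref{lem2.4}'' together with the classical embedding into $L^{\infty}$, the product estimates being essentially those of Lemma \ref{lem3.3} quoted from \cite{T}. Your paraproduct and remainder estimates are correct as written (two small points: what absorbs the outer $\dot{\Delta}_{j}$ is the convolution bound $\|\phi\ast f\|_{M^{p}_{q}}\leq C\|\phi\|_{L^{1}}\|f\|_{M^{p}_{q}}$ recorded after Lemma \ref{lem2.1}, not Bernstein; and for $p<\infty$ the endpoint $s=n/p$ still has $s>0$, so the remainder sum needs no extra care --- the only place $r=1$ is genuinely used is the embedding into $L^{\infty}$). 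In particular the \emph{inhomogeneous} statement is fully proved by your last paragraph, since $N^{s}_{p,q,r}\hookrightarrow B^{s-n/p}_{\infty,r}\hookrightarrow L^{\infty}$ is valid for $s>n/p$ (or $s=n/p$, $r=1$).

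The genuine gap is in the homogeneous half in the supercritical case. The chain $\dot{N}^{s}_{p,q,r}\hookrightarrow\dot{B}^{s-n/p}_{\infty,r}\hookrightarrow L^{\infty}$ fails at the second arrow when $s-n/p>0$: a bound $\sup_{j}2^{j\sigma}\|\dot{\Delta}_{j}f\|_{L^{\infty}}<\infty$ with $\sigma>0$ gives no control of the low-frequency sum $\sum_{j\leq 0}\|\dot{\Delta}_{j}f\|_{L^{\infty}}$, so $\dot{B}^{\sigma}_{\infty,r}\not\subset L^{\infty}$ for $\sigma>0$; in the homogeneous scale the embedding into $L^{\infty}$ holds only at $\sigma=0$ with $r=1$. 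Consequently your argument proves the product estimate $\|fg\|_{\dot{N}^{s}_{p,q,r}}\lesssim\|f\|_{L^{\infty}}\|g\|_{\dot{N}^{s}_{p,q,r}}+\|g\|_{L^{\infty}}\|f\|_{\dot{N}^{s}_{p,q,r}}$, hence that $\dot{N}^{s}_{p,q,r}\cap L^{\infty}$ is an algebra for $s>n/p$, and the genuine homogeneous algebra property only in the critical case $s=n/p$, $r=1$; it does not close for $\dot{N}^{s}_{p,q,r}$ alone when $s>n/p$ (and that statement is itself doubtful: for $q=p$ it reduces to the classical fact that $\dot{B}^{s}_{p,r}$ with $s>n/p$ is not an algebra without intersecting with $L^{\infty}$). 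To be fair, the paper's own one-line justification invokes the same homogeneous embedding $\dot{B}^{s}_{p,r}\hookrightarrow L^{\infty}$ for $s>n/p$, so you have reproduced its reasoning including its weak point; but as a blind proof of the lemma as stated, this step would fail.
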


Finally, we present the Bony's para-product formula to end up this
section. For simplicity, we state the homogeneous case only.
\begin{defn}\label{defn2.1}
Let $f,g $ be two temperate distributions. The product $f\cdot g$
has the Bony's decomposition formally:
$$f\cdot g=\dot{T}_{f}g+\dot{T}_{g}f+\dot{R}(f,g), $$
where $\dot{T}_{f}g$ is paraproduct of $g$ by $f$,
$$ \dot{T}_{f}g=\sum_{j'\leq j-2}\dot{\Delta}_{j'}f\dot{\Delta}_{j}g=\sum_{j\in\mathbb{Z}}\dot{S}_{j-1}f\dot{\Delta}_{j}g$$
and the remainder $\dot{R}(f,g)$ is denoted by
$$\dot{R}(f,g)=\sum_{|j-j'|\leq1}\dot{\Delta}_{j}f\dot{\Delta}_{j'}g.$$
\end{defn}

The para-product of two temperate distributions is always defined,
since the general term of the para-product is spectrally localized
in dyadic shells. However, the remainder may not be defined. Roughly
speaking, it is defined when $f$ and $g$ belong to functional spaces
whose sum of regularity index is positive.  The reader is referred
to \cite{BCD} for more details on the subject.

\section{Key lemmas}\label{sec:3}
\setcounter{equation}{0}
In this section, we will present some key lemmas, which are used to prove the main results.
The first one is related to the particle trajectory mapping.
\begin{lem}\label{lem3.1}
Assume that $f\in M^{p}_{q}(R^{n})$ for $1\leq q\leq p\leq\infty$.
If $X:\alpha\mapsto X(\alpha)$ is a volume-preserving
diffeomorphism, then
\begin{eqnarray}\|f(\alpha)\|_{M^{p}_{q}}=\|f(X(\alpha))\|_{M^{p}_{q}}.\label{R-E666}\end{eqnarray}
\end{lem}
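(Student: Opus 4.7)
The plan is to unpack both sides by the definition of the Morrey norm and align them through a change of variables. Writing
$$\|f\circ X\|_{M^{p}_{q}}=\sup_{x_{0}\in\mathbb{R}^{n}}\sup_{r>0}r^{n/p-n/q}\Big(\int_{B(x_{0},r)}|f(X(\alpha))|^{q}\,d\alpha\Big)^{1/q},$$
I would substitute $y=X(\alpha)$. Since $X$ is a volume-preserving diffeomorphism, $|\det DX(\alpha)|\equiv 1$, so $d\alpha=dy$ and the inner integral becomes $\int_{X(B(x_{0},r))}|f(y)|^{q}\,dy$, integrated over the image set $X(B(x_{0},r))$, which shares the Lebesgue measure of $B(x_{0},r)$ but is not a Euclidean ball in general.

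The second step is to replace the deformed set $X(B(x_{0},r))$ by Euclidean balls so as to recognize the Morrey norm of $f$ itself. I would use a Vitali-type covering: cover $X(B(x_{0},r))$ by a family of Euclidean balls whose total measure is comparable to $|B(x_{0},r)|=c_{n}r^{n}$, bound $\int_{X(B(x_{0},r))}|f(y)|^{q}\,dy$ by summing the contributions from each ball in the covering, and control each such contribution by $\|f\|_{M^{p}_{q}}^{q}$ together with a power of its radius. After reintroducing the factor $r^{n/p-n/q}$ and taking suprema, this should yield $\|f\circ X\|_{M^{p}_{q}}\lesssim\|f\|_{M^{p}_{q}}$. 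The reverse inequality follows by applying the same argument to $X^{-1}$, which is itself a volume-preserving diffeomorphism: $\|f\|_{M^{p}_{q}}=\|(f\circ X)\circ X^{-1}\|_{M^{p}_{q}}\lesssim\|f\circ X\|_{M^{p}_{q}}$.

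The main obstacle is that the Morrey norm is genuinely geometry-sensitive: unlike the $L^{p}$ norm, it is a supremum over Euclidean balls rather than over arbitrary sets of fixed measure, so its invariance under volume-preserving maps is not a priori obvious. To promote the two-sided control to an actual equality (\ref{R-E666}) rather than equivalence with constants, the cleanest route is to pass through an equivalent characterization of $M^{p}_{q}$ in which the supremum is taken over arbitrary bounded measurable sets $A$ with weight $|A|^{1/p-1/q}$; under such a formulation the invariance is immediate from the change-of-variables step alone. I expect the technical work to concentrate in justifying that such a reformulation loses no constant, or in arguing that the standard ball-definition is equivalent to a ``set-definition'' in a sharp enough sense to preserve equality; beyond this, the rest of the argument is a one-line application of the Jacobian-one change of variables.
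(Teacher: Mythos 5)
Your opening change-of-variables step is fine, and it is exactly where the paper also starts; the trouble is that neither of your two proposed ways of returning from the deformed set $X(B(x_{0},r))$ to Euclidean balls can work, and that return is the whole content of the lemma. The Vitali-type covering argument fails quantitatively: if you cover $X(B(x_{0},r))$ by balls $B(y_{i},\rho_{i})$ with $\sum_{i}\rho_{i}^{n}\lesssim r^{n}$, the Morrey bound on each piece gives $\int_{B(y_{i},\rho_{i})}|f|^{q}\,dy\leq\|f\|_{M^{p}_{q}}^{q}\,\rho_{i}^{\,n(1-q/p)}$, so you would need $\sum_{i}\rho_{i}^{\,n(1-q/p)}\lesssim r^{\,n(1-q/p)}$; but since $n(1-q/p)<n$ when $q<p$, taking $N$ congruent balls with $N\rho^{n}\approx r^{n}$ gives $\sum_{i}\rho_{i}^{\,n(1-q/p)}\approx N^{q/p}\,r^{\,n(1-q/p)}$, which is unbounded in $N$, so no constant independent of the geometry of $X$ emerges. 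The fallback you suggest, an ``equivalent'' formulation with suprema over arbitrary bounded measurable sets $A$ weighted by $|A|^{1/p-1/q}$, is not in fact equivalent to the ball-based Morrey norm: for $f=\chi_{T}$ with $T$ a tube of width $\epsilon$ and unit volume, the ball-based norm tends to $0$ as $\epsilon\to0$ (for $q<p$), while the set-based quantity is $\geq1$ by testing with $A=T$. So neither route recovers the claimed equality, nor even a two-sided equivalence with absolute constants.

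It is also worth seeing how the paper itself handles this point: after the Jacobian-one change of variables it simply writes $\int_{X^{-1}(B(x_{0},r))}|f(X(\alpha))|^{q}d\alpha=\int_{B(y_{0},r)}|f(X(\alpha))|^{q}d\alpha$ with $y_{0}=X^{-1}(x_{0})$, i.e.\ it identifies the preimage of a ball with a ball of the same radius. That identity holds for rigid motions but not for a general volume-preserving diffeomorphism, which can send a ball onto a long thin tube --- precisely the configuration behind the counterexample above. So your instinct that the geometric sensitivity of the Morrey norm is the genuine obstruction is correct, but your proposal does not overcome it: measure preservation alone only yields the case $q=p$ (plain $L^{p}$ invariance), and any argument for $q<p$ must use more about $X$ than $\det\nabla_{\alpha}X\equiv1$, a gap your covering and reformulation steps leave open just as the paper's one-line identification does.
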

\begin{proof}
For $x_{0}\in R^{n}$ and $r>0$, $\exists! y_{0}\in R^{n}$ s.t.
$x_{0}=X(y_{0})$. Then
\begin{eqnarray}
\int_{B(x_{0},r)}|f(\alpha)|^{q}d\alpha&=&\int_{X^{-1}(B(x_{0},r))}|f(X(\alpha))|^{q}\det(\nabla_{\alpha}X(\alpha))d\alpha\nonumber\\
&=&\int_{X^{-1}(B(x_{0},r))}|f(X(\alpha))|^{q}d\alpha\nonumber\\
&=&\int_{B(y_{0},r)}|f(X(\alpha))|^{q}d\alpha,\label{R-E667}
\end{eqnarray}
which implies
\begin{eqnarray}
\sup_{x_{0}\in
R^{n}}\sup_{r>0}r^{n/p-n/q}\int_{B(x_{0},r)}|f(\alpha)|^{q}d\alpha=\sup_{y_{0}\in
R^{n}}\sup_{r>0}r^{n/p-n/q}\int_{B(y_{0},r)}|f(X(\alpha))|^{q}d\alpha,\label{R-E668}
\end{eqnarray}
so (\ref{R-E666}) follows from Definition\ref{defn1.1} immediately.
\end{proof}

The next one concerns the logarithmic Besov-Morrey inequality, which is very useful to establish
the blow-up criterion in the super-critical case.
\begin{lem}\label{lem3.2}[\cite{T}]
Let $s>n/p$ with $1\leq q\leq p\leq\infty,\ 1\leq r\leq\infty.$
Assume $f\in N^s_{p,q,r}$, then there exists a constant $C$ such
that the following inequality holds:
\begin{eqnarray}
\|f\|_{L^\infty}\leq
C\Big(1+\|f\|_{\dot{B}^{0}_{\infty,\infty}}(\log^{+}\|f\|_{N^s_{p,q,r}}+1)\Big).
\label{R-E6}
\end{eqnarray}
\end{lem}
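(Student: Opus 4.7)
The plan is to adapt the classical Brezis--Gallou\"et--Wainger / Kozono--Ogawa--Taniuchi argument to the Besov--Morrey setting. Starting from the inhomogeneous Littlewood--Paley expansion $f=\sum_{j\geq -1}\Delta_j f$, I would bound $\|f\|_{L^\infty}\leq \sum_{j\geq -1}\|\Delta_j f\|_{L^\infty}$ and split the sum at an integer threshold $N\geq 0$ to be optimized at the end.

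For the low-frequency block $0\leq j\leq N$, each $\Delta_j$ coincides with the homogeneous block $\dot{\Delta}_j$, so by the very definition of the $\dot{B}^{0}_{\infty,\infty}$ norm one has
$$\sum_{j=0}^{N}\|\Delta_j f\|_{L^\infty}\leq (N+1)\|f\|_{\dot{B}^{0}_{\infty,\infty}}.$$
The residual $j=-1$ piece has spectrum in a fixed ball, so a Bernstein-type inequality in $M^{p}_{q}$ (a direct analogue of Lemma~\ref{lem2.1} from $M^{p}_{q}$ to $L^\infty$) combined with Lemma~\ref{lem2.3} gives $\|\Delta_{-1}f\|_{L^\infty}\lesssim \|f\|_{M^{p}_{q}}\lesssim \|f\|_{N^{s}_{p,q,r}}$, and I will absorb it at the end. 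For the high-frequency tail, I invoke the Sobolev-type embedding $N^{s}_{p,q,r}\hookrightarrow B^{s-n/p}_{\infty,\infty}$ supplied by Lemma~\ref{lem2.4}; this is the step where the hypothesis $s>n/p$ is indispensable, since it makes the exponent $s-n/p$ strictly positive. Writing each high-frequency term as $2^{-j(s-n/p)}\cdot 2^{j(s-n/p)}\|\Delta_j f\|_{L^\infty}$ and summing the geometric series yields
$$\sum_{j>N}\|\Delta_j f\|_{L^\infty}\lesssim 2^{-N(s-n/p)}\|f\|_{N^{s}_{p,q,r}}.$$

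Combining the two estimates produces $\|f\|_{L^\infty}\lesssim 1+(N+1)\|f\|_{\dot B^{0}_{\infty,\infty}}+2^{-N(s-n/p)}\|f\|_{N^{s}_{p,q,r}}$. The last step is to choose $N\approx (s-n/p)^{-1}\log_{2}\!\bigl(\|f\|_{N^{s}_{p,q,r}}/\|f\|_{\dot B^{0}_{\infty,\infty}}\bigr)$, clipped to $N\geq 0$, so that the two main contributions balance and generate precisely the logarithmic factor on the right-hand side of \eqref{R-E6}. The degenerate regimes (namely $\|f\|_{N^{s}_{p,q,r}}\leq 1$, or $\|f\|_{\dot B^{0}_{\infty,\infty}}$ so small that the optimizer is clipped at $N=0$) are absorbed by the additive $1$ and the $\log^{+}$ in the statement. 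The only genuinely technical point is the Bernstein-type $M^{p}_{q}\to L^\infty$ estimate handling the $j=-1$ block; this follows routinely from the convolution estimate $\|\phi\ast f\|_{M^{p}_{q}}\lesssim \|\phi\|_{L^1}\|f\|_{M^{p}_{q}}$ noted before Lemma~\ref{lem2.2} applied to a Schwartz kernel, so I do not expect a serious obstacle beyond bookkeeping.
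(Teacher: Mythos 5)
There is a genuine gap, and it sits exactly at the point you dismiss as bookkeeping: the low-frequency block. (Note the paper itself does not prove Lemma \ref{lem3.2}; it quotes it from \cite{T}, so the comparison is with the standard Kozono--Ogawa--Taniuchi-type argument that your sketch follows.) Your bound for the $j=-1$ piece is $\|\Delta_{-1}f\|_{L^\infty}\lesssim\|f\|_{M^{p}_{q}}\lesssim\|f\|_{N^{s}_{p,q,r}}$, and you then claim it can be ``absorbed at the end''; in your combined inequality this term has silently become the additive constant $1$. It cannot be absorbed: $\|f\|_{N^{s}_{p,q,r}}$ enters the right-hand side of (\ref{R-E6}) only through the logarithm, multiplied by $\|f\|_{\dot B^{0}_{\infty,\infty}}$, which may be tiny. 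What your argument actually proves is $\|f\|_{L^\infty}\lesssim\|f\|_{M^{p}_{q}}+\|f\|_{\dot B^{0}_{\infty,\infty}}\bigl(1+\log^{+}\|f\|_{N^{s}_{p,q,r}}\bigr)$, which is strictly weaker than (\ref{R-E6}) (indeed it already follows from Lemma \ref{lem2.4} without any splitting). Nor can you hope for $\|\Delta_{-1}f\|_{L^\infty}\lesssim 1+\|f\|_{\dot B^{0}_{\infty,\infty}}$: take $f=\sum_{k=1}^{K}g(2^{-k}\cdot)$ with $\widehat{g}$ supported in an annulus $\{1/2\le|\xi|\le1\}$ and $g(0)=\|g\|_{L^\infty}=1$; then $\|\Delta_{-1}f\|_{L^\infty}\approx K$ while $\|f\|_{\dot B^{0}_{\infty,\infty}}\approx1$. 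In this example the statement is saved only because $\|f\|_{M^{p}_{q}}\gtrsim 2^{Kn/p}$, i.e.\ because the size of the low-frequency block is itself controlled \emph{logarithmically} by the Morrey norm --- and that is precisely the mechanism your proof omits.

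The fix is a second logarithmic splitting on the negative frequencies. Write $\Delta_{-1}f=\sum_{j\le -1}\dot{\Delta}_{j}\Delta_{-1}f$ and use the Morrey--Bernstein estimate $\|\dot{\Delta}_{j}f\|_{L^\infty}\lesssim 2^{jn/p}\|\dot{\Delta}_{j}f\|_{M^{p}_{q}}\lesssim 2^{jn/p}\|f\|_{M^{p}_{q}}$ (the same estimate underlying Lemma \ref{lem2.4}; this is where $p<\infty$ is genuinely needed). For $-N_{1}\le j\le -1$ bound each term by $\|f\|_{\dot B^{0}_{\infty,\infty}}$, contributing $N_{1}\|f\|_{\dot B^{0}_{\infty,\infty}}$; for $j<-N_{1}$ sum the geometric series to get $2^{-N_{1}n/p}\|f\|_{M^{p}_{q}}$, and choose $N_{1}\approx \tfrac{p}{n}\log_{2}\bigl(e+\|f\|_{M^{p}_{q}}\bigr)\lesssim 1+\log^{+}\|f\|_{N^{s}_{p,q,r}}$ so that this tail is $O(1)$. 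Combined with your (correct) treatment of the middle range $0\le j\le N$ and of the high frequencies via Lemma \ref{lem2.4}, and with the choice $N\approx\log_{2}(e+\|f\|_{N^{s}_{p,q,r}})$ (which is simpler and avoids the division by $\|f\|_{\dot B^{0}_{\infty,\infty}}$ in your optimizer), this yields (\ref{R-E6}). As a side remark, this analysis also shows the hypothesis should really be $p<\infty$: for $p=\infty$ one has $N^{s}_{\infty,q,r}=B^{s}_{\infty,r}$-type spaces containing nonzero constants, for which the right-hand side of (\ref{R-E6}) degenerates to a constant while the left-hand side is arbitrary.
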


In order to estimate the bilinear terms, we need the following
Moser-type inequalities in Besov-Morrey spaces.
\begin{lem}\label{lem3.3}[\cite{T}]
Let $s>n/p$ with $1\leq q\leq p<\infty,\ 1\leq r\leq\infty$ or
$p=r=\infty.$ Then exists a constant $C$ such that the following
inequalities hold:
\begin{eqnarray}
\|fg\|_{\dot{N}^s_{p,q,r}}\leq
C\Big(\|f\|_{M^{p_{1}}_{q_{1}}}\|g\|_{\dot{N}^s_{p_{2},q_{2},r}}+\|g\|_{M^{p_{3}}_{q_{3}}}
\|f\|_{\dot{N}^s_{p_{4},q_{4},r}}\Big),\label{R-E7}
\end{eqnarray}
\begin{eqnarray}
\|fg\|_{N^s_{p,q,r}}\leq
C\Big(\|f\|_{M^{p_{1}}_{q_{1}}}\|g\|_{N^s_{p_{2},q_{2},r}}+\|g\|_{M^{p_{3}}_{q_{3}}}\|f\|_{N^s_{p_{4},q_{4},r}}\Big),\label{R-E8}
\end{eqnarray}
and
\begin{eqnarray}
\|fg\|_{\dot{N}^s_{p,q,r}}\leq
C\Big(\|f\|_{\dot{N}^{-\alpha}_{p_{1},q_{1},r_{1}}}\|g\|_{\dot{N}^{s+\alpha}_{p_{2},q_{2},r_{2}}}
+\|g\|_{\dot{N}^{-\alpha}_{p_{3},q_{3},r_{3}}}\|f\|_{\dot{N}^{s+\alpha}_{p_{4},q_{4},r_{4}}}\Big)\label{R-E9}
\end{eqnarray}
for $\alpha>0$, where $1\leq q_{1}\leq p_{1}\leq\infty$ and $1\leq
q_{3}\leq p_{3}\leq\infty$, such that
$$1/p=1/p_{1}+1/p_{2}=1/p_{3}+1/p_{4},\ 1/r=1/r_{1}+1/r_{2}=1/r_{3}+1/r_{4},$$ $$ 1/q\leq1/q_{1}+1/q_{2},\ \ \mbox{and}\ \ 1/q\leq1/q_{3}+1/q_{4}.$$
\end{lem}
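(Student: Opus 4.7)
The plan is to establish each of the three inequalities by applying Bony's paraproduct decomposition $fg = \dot{T}_f g + \dot{T}_g f + \dot{R}(f, g)$ and estimating each piece block by block. The two analytic tools that do the work are (a) H\"older's inequality in Morrey spaces, $\|uv\|_{M^p_q} \lesssim \|u\|_{M^{p_1}_{q_1}}\|v\|_{M^{p_2}_{q_2}}$ whenever $1/p = 1/p_1 + 1/p_2$ and $1/q \leq 1/q_1 + 1/q_2$ (the case of equality $1/q = 1/q_1+1/q_2$ follows directly from classical H\"older applied on each ball $B(x_0,r)$, and the inequality is reduced to that case by the embedding $M^p_{q'}\hookrightarrow M^p_q$ for $q\leq q'\leq p$), and (b) Bernstein's inequality (Lemma~\ref{lem2.1}) together with the convolution bound $\|\phi\ast f\|_{M^p_q}\leq C\|\phi\|_{L^1}\|f\|_{M^p_q}$ already recalled right after it.

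For (\ref{R-E7}) I would split $\dot\Delta_j(fg)$ according to Bony. The paraproduct piece $\dot\Delta_j(\dot{T}_f g) = \sum_{|k-j|\leq N_0}\dot\Delta_j(\dot{S}_{k-1}f\cdot\dot\Delta_k g)$ is controlled by noting that $\dot{S}_{k-1}f\cdot\dot\Delta_k g$ is spectrally localized in an annulus $\{|\xi|\sim 2^k\}$; H\"older in $M^p_q$ and the uniform bound $\|\dot{S}_{k-1}f\|_{M^{p_1}_{q_1}}\lesssim\|f\|_{M^{p_1}_{q_1}}$ yield
\[
2^{js}\|\dot\Delta_j(\dot{T}_f g)\|_{M^p_q}\lesssim \|f\|_{M^{p_1}_{q_1}}\sum_{|k-j|\leq N_0} 2^{(j-k)s}\cdot 2^{ks}\|\dot\Delta_k g\|_{M^{p_2}_{q_2}},
\]
and the $\ell^r$ norm in $j$ is bounded by $\|f\|_{M^{p_1}_{q_1}}\|g\|_{\dot{N}^s_{p_2,q_2,r}}$ via Young's convolution inequality with an $\ell^1$-kernel supported in $|m|\leq N_0$. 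The symmetric term $\dot\Delta_j(\dot{T}_g f)$ produces the second summand on the right-hand side of (\ref{R-E7}). For the remainder $\dot\Delta_j\dot{R}(f,g) = \sum_{k\geq j-N'}\dot\Delta_j(\dot\Delta_k f\cdot\widetilde{\dot\Delta}_k g)$ the localization is only a ball of radius $\sim 2^{k}$, so the summation now runs over all $k\geq j-N'$; writing $2^{js} = 2^{(j-k)s}\cdot 2^{ks}$ and observing that the kernel $\{2^{ms}\mathbf{1}_{m\leq N'}\}_m$ lies in $\ell^1$ precisely because $s>0$, one further application of Young closes the bound. The endpoint $p=r=\infty$ needs only cosmetic modifications.

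The inhomogeneous statement (\ref{R-E8}) I would deduce from (\ref{R-E7}) via Lemma~\ref{lem2.3}, the low-frequency part $\|fg\|_{M^p_q}$ being a direct H\"older estimate. The negative-regularity refinement (\ref{R-E9}) follows the same scheme except that $\|\dot{S}_{k-1}f\|_{M^{p_1}_{q_1}}$ cannot be bounded uniformly in $k$; instead Bernstein together with $\alpha>0$ gives $\|\dot{S}_{k-1}f\|_{M^{p_1}_{q_1}}\lesssim 2^{k\alpha}\|f\|_{\dot{N}^{-\alpha}_{p_1,q_1,\infty}}$, and the factor $2^{k\alpha}$ is absorbed exactly against the $s+\alpha$ index carried by $g$. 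The main obstacle I anticipate is (\ref{R-E9}): one must track four pairs of Morrey exponents $(p_i,q_i)$ and four summability exponents $r_i$ simultaneously, distributing the $\ell^r$-summability between $f$ and $g$ through H\"older's inequality in $\ell^r$, and separately verifying convergence of the remainder under the combined constraints $s+\alpha>0$ (for the sum over $k\geq j-N'$) and $\alpha>0$ (for the sum defining $\dot S_{k-1}f$).
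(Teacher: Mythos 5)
You should first note that the paper contains no proof of Lemma~\ref{lem3.3} to compare with: it is imported verbatim from the preprint \cite{T}. Your Bony-decomposition scheme (paraproduct pieces via H\"older in Morrey spaces plus uniform bounds on $\dot S_{k-1}$, remainder via the ball-localization and Young's inequality in $\ell^r$ using $s>0$, the inhomogeneous case via Lemma~\ref{lem2.3}, and the shift $\|\dot S_{k-1}f\|_{M^{p_1}_{q_1}}\lesssim 2^{k\alpha}\|f\|_{\dot N^{-\alpha}_{p_1,q_1,\infty}}$ for (\ref{R-E9})) is certainly the standard route and presumably the one intended in the cited source.

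There is, however, a genuine gap in your tool (a), on which every block estimate rests. You claim $\|uv\|_{M^p_q}\lesssim\|u\|_{M^{p_1}_{q_1}}\|v\|_{M^{p_2}_{q_2}}$ under $1/p=1/p_1+1/p_2$ and $1/q\leq1/q_1+1/q_2$, reducing the non-equality case to the equality case ``by the embedding $M^p_{q'}\hookrightarrow M^p_q$ for $q\leq q'\leq p$.'' But classical H\"older on each ball places $uv$ in $M^p_{\tilde q}$ with $1/\tilde q=1/q_1+1/q_2$, and the hypothesis $1/q\leq1/\tilde q$ means $\tilde q\leq q$, so the embedding you would need is $M^p_{\tilde q}\hookrightarrow M^p_q$ --- exactly the wrong direction, since for $\tilde q\leq q\leq p$ it is $M^p_{q}\hookrightarrow M^p_{\tilde q}$ that holds. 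Worse, the inequality with strict inequality $1/q<1/q_1+1/q_2$ is false for general functions: in dimension $n=1$ take $u=v=|x|^{-1/4}\mathbf{1}_{\{|x|\leq1\}}$, $p_1=p_2=4$, $q_1=q_2=2$, $p=q=2$; then $u\in M^4_2$, while $uv=|x|^{-1/2}\mathbf{1}_{\{|x|\leq1\}}$ is not in $L^2_{\mathrm{loc}}$, so $\|uv\|_{M^2_2}=\infty$. Hence your block-by-block bounds are justified only in the equality case $1/q=1/q_1+1/q_2$ (or when one factor is measured in $L^\infty$, i.e.\ $p_1=q_1=\infty$, which is all that is actually invoked later, e.g.\ in (\ref{R-E33}) and (\ref{R-E37})); under the constraint as you state it the H\"older step does not close, and the frequency localization of $\dot S_{k-1}f\,\dot\Delta_k g$ does not rescue it, since Bernstein-type embeddings in Morrey spaces improve the outer index $p$ rather than the inner index $q$ at fixed $p$. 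To make the argument sound you should either restrict to $1/q=1/q_1+1/q_2$ (or $1/q\geq1/q_1+1/q_2$), which suffices for every application in this paper, or supply a genuinely different proof of the product law in the stated generality. A minor additional point: for the remainder term in (\ref{R-E9}) the geometric factor is $2^{(j-k)s}$, so the condition needed there is $s>0$ (the sum of the regularity indices $-\alpha$ and $s+\alpha$), not $s+\alpha>0$; this is harmless here because $s>n/p\geq0$ is assumed, but the bookkeeping should be stated correctly.
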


The last one concerns the commutator estimates, which plays the important role in
the treatment of frequency-localized nonlinear terms.
\begin{lem}\label{lem3.4}
For $s>0$, $1\leq q\leq p<\infty$ and $ 1\leq r\leq\infty$, there is
a constant $C$ such that
\begin{eqnarray}
\|2^{sj}\|[v\cdot\nabla,\dot{\Delta}_{j}]\theta\|_{M^{p}_{q}}\|_{\ell^{r}}
\leq C\Big(\|\nabla
v\|_{L^\infty}\|\theta\|_{\dot{N}^s_{p,q,r}}+\|\nabla\theta\|_{M^{p_{1}}_{q_{1}}}\|v\|_{N^s_{p_{2},q_{2},r}}\Big)\label{R-E10}
\end{eqnarray}
holds for all $\theta\in \dot{N}^s_{p,q,r}$ with $\nabla\theta\in
M^{p_{1}}_{q_{1}}$ and all $v\in N^s_{p_{2},q_{2},r}$ with $\nabla
v\in L^\infty$ such that $\mathrm{div}v=0$, where $1\leq q_{1}\leq
p_{1}\leq\infty$ such that $1/p=1/p_{1}+1/p_{2}$ and
$1/q\leq1/q_{1}+1/q_{2}$.
\end{lem}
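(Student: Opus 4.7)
The plan is to reduce the commutator to a finite sum of simpler pieces by inserting Bony's para-product decomposition from Definition \ref{defn2.1} into the product $v\cdot\nabla\theta$. Writing $v\cdot\nabla\theta=\sum_{k}v_k\partial_k\theta$ and decomposing each factor as
\[
v_k\partial_k\theta=\dot{T}_{v_k}\partial_k\theta+\dot{T}_{\partial_k\theta}v_k+\dot{R}(v_k,\partial_k\theta),
\]
I would then expand the commutator $[v\cdot\nabla,\dot{\Delta}_j]\theta=v\cdot\nabla\dot{\Delta}_j\theta-\dot{\Delta}_j(v\cdot\nabla\theta)$ so that it splits into five dyadic blocks: (a) the para-product commutator $[\dot{T}_{v_k},\dot{\Delta}_j]\partial_k\theta$, (b) the "low$\times$high'' pieces $\dot{T}_{\partial_k\dot{\Delta}_j\theta}v_k$ and $\dot{\Delta}_j\dot{T}_{\partial_k\theta}v_k$, and (c) the two remainders $\dot{R}(v_k,\partial_k\dot{\Delta}_j\theta)$ and $\dot{\Delta}_j\dot{R}(v_k,\partial_k\theta)$. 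The divergence-free condition $\mathrm{div}\, v=0$ will be used to convert $\dot{R}(v_k,\partial_k\theta)$ into $\partial_k\dot{R}(v_k,\theta)$, which gains a derivative that one can later shift onto $v$.

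For block (a), I would use the standard first-order Taylor argument: writing $\dot\Delta_j f=2^{jn}h(2^j\cdot)\ast f$ for a Schwartz function $h$ and restricting the support condition $|j-j'|\leq N_0$ coming from $\dot{T}_{v_k}\partial_k\theta=\sum_{j'}\dot{S}_{j'-1}v_k\,\dot{\Delta}_{j'}\partial_k\theta$, one finds
\[
\bigl[\dot{T}_{v_k},\dot{\Delta}_j\bigr]\partial_k\theta=\sum_{|j'-j|\leq N_0}\int h(y)\bigl(\dot{S}_{j'-1}v_k(x)-\dot{S}_{j'-1}v_k(x-2^{-j}y)\bigr)\dot{\Delta}_{j'}\partial_k\theta(x-2^{-j}y)\,dy.
\]
Combining the mean-value theorem with $\|\dot{S}_{j'-1}\nabla v\|_{L^\infty}\leq C\|\nabla v\|_{L^\infty}$ and the translation-invariance of the Morrey norm (together with $\|\phi\ast f\|_{M^p_q}\leq C\|\phi\|_{L^1}\|f\|_{M^p_q}$) yields the pointwise Morrey bound $2^{-j}\|\nabla v\|_{L^\infty}\|\dot{\Delta}_{j'}\partial_k\theta\|_{M^p_q}$. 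After applying Lemma \ref{lem2.2} and Young's inequality in $\ell^r$, this block is controlled by $\|\nabla v\|_{L^\infty}\|\theta\|_{\dot{N}^s_{p,q,r}}$.

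For block (b), the low-frequency factor $\dot{S}_{j'-1}\partial_k\theta$ is estimated in $M^{p_1}_{q_1}$ directly (using $\|\dot{S}_{j'-1}\nabla\theta\|_{M^{p_1}_{q_1}}\leq C\|\nabla\theta\|_{M^{p_1}_{q_1}}$) while the high-frequency factor $\dot{\Delta}_{j'}v_k$ is controlled via the Morrey version of Hölder $\|fg\|_{M^p_q}\lesssim\|f\|_{M^{p_1}_{q_1}}\|g\|_{M^{p_2}_{q_2}}$, producing the second right-hand term $\|\nabla\theta\|_{M^{p_1}_{q_1}}\|v\|_{N^s_{p_2,q_2,r}}$ after applying Lemma \ref{lem2.2} to reconstruct the $N^s_{p_2,q_2,r}$ norm. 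For block (c), after moving the derivative as above, the same Hölder-in-Morrey estimate together with Lemma \ref{lem3.3} bounds the remainder by the same right-hand side, since in the diagonal block both indices $j,j'$ are comparable, which absorbs the derivative loss.

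The main obstacle is verifying that the standard $L^p$-level tools used in the classical Besov commutator estimate — Hölder, Young, and translation invariance — carry over cleanly to the Morrey norm $M^p_q$; in particular the shifted difference $\dot{S}_{j'-1}v_k(\cdot)-\dot{S}_{j'-1}v_k(\cdot-2^{-j}y)$ must be controlled in $M^{p}_{q}$ uniformly in $y$ by $2^{-j}|y|\,\|\nabla v\|_{L^\infty}$, which requires the pointwise bound to be combined with the Morrey norm on the $\dot{\Delta}_{j'}\partial_k\theta$ factor rather than on the difference itself. Once this is correctly arranged and one tracks how $1/q\leq 1/q_1+1/q_2$ makes the Morrey Hölder inequality valid (see \cite{KY}), the assembly of the five blocks and summation against the $\ell^r$ weight $2^{sj}$ is routine via convolution with the geometric sequence $2^{-s|j-j'|}$.
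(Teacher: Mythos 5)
Your proposal is correct and follows essentially the same route as the paper's proof: the identical five-block Bony decomposition of $[v\cdot\nabla,\dot{\Delta}_{j}]\theta$, the Taylor/mean-value argument combined with Young's inequality in $M^{p}_{q}$ for the paraproduct commutator, H\"older in Morrey spaces putting $\nabla\theta$ in $M^{p_{1}}_{q_{1}}$ and the dyadic blocks of $v$ in $M^{p_{2}}_{q_{2}}$ for the other pieces, and $\ell^{r}$-summation via Young's inequality for sequences using $s>0$. The only (harmless) deviation is your integration by parts in the remainder blocks using $\mathrm{div}\,v=0$, which the paper does not need in this lemma --- it bounds $\|\dot{\Delta}_{j''}\partial_{i}\theta\|_{M^{p_{1}}_{q_{1}}}\leq C\|\nabla\theta\|_{M^{p_{1}}_{q_{1}}}$ directly and reserves the derivative-shifting device for Lemma \ref{lem3.5} --- and since the comparability $j''\geq j-2$ lets you return the gained factor $2^{j}$ to $\nabla\theta$ rather than to $v$, you arrive at the same bound.
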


\begin{proof}
By Bony's para-product, we decompose
$[v\cdot\nabla,\dot{\Delta}_{j}]\theta=K_{1}+K_{2}+K_{3}+K_{4}+K_{5}$
with
$$K_{1}=[T_{\upsilon^{i}}\partial_{i},\dot{\Delta}_{j}]\theta,\ \ K_{2}=-\dot{\Delta}_{j}T_{\partial_{i}\theta}\upsilon^{i}, \ \ K_{3}=T_{\partial_{i}\dot{\Delta}_{j}\theta}\upsilon^{i},$$
$$K_{4}=-\dot{\Delta}_{j}R(\upsilon^{i},\partial_{i}\theta),\ \ K_{5}=R(\upsilon^{i},\partial_{i}\dot{\Delta}_{j}\theta),$$
where the Einstein notation was used for simplicity.

From the definition of $\dot{\Delta}_{j}$, we have the almost
orthogonal properties:
$$\dot{\Delta}_{i}\Delta_{j}f\equiv 0 \ \ \ \mbox{if}\ \ \ |i-j|\geq 2,$$
$$\dot{\Delta}_{j}(\dot{S}_{j-1}f\dot{\Delta}_{i}g)\equiv 0\ \ \ \mbox{if}\ \ \ |i-j|\geq 5.$$
For $K_{1}$, it follows from the fact
$\mathrm{div}\dot{S}_{j-1}\upsilon=0$ for all $j\in\mathbb{Z}$ and
orthogonal properties that
\begin{eqnarray*}
K_{1}&=&T_{\upsilon^{i}}\partial_{i}\dot{\Delta}_{j}\theta-\dot{\Delta}_{j}T_{\upsilon^{i}}\partial_{i}\theta\nonumber\\&=&
\sum_{j'\in
\mathbb{Z}}\Big\{S_{j'-1}\upsilon^{i}\dot{\Delta}_{j}(\partial_{i}\dot{\Delta}_{j'}\theta)-\dot{\Delta}_{j}(S_{j'-1}\upsilon^{i}\dot{\Delta}_{j'}\partial_{i}\theta)\Big\}
\nonumber\\&=&
\sum_{|j-j'|\leq4}2^{jn}\int_{\mathbb{R}^n}\varphi_{0}(2^{j}(x-y))\Big\{S_{j'-1}\upsilon^{i}(x)-S_{j'-1}\upsilon^{i}(y)\Big\}\partial_{i}\dot{\Delta}_{j'}\theta(y)dy
\nonumber\\&=&
\sum_{|j-j'|\leq4}2^{j(n+1)}\int_{\mathbb{R}^n}\partial_{i}\varphi_{0}(2^{j}(x-y))\Big\{S_{j'-1}\upsilon^{i}(x)-S_{j'-1}\upsilon^{i}(y)\Big\}\dot{\Delta}_{j'}\theta(y)dy
\nonumber\\&=&
\sum_{|j-j'|\leq4}2^{j(n+1)}\int_{\mathbb{R}^n}\partial_{i}\varphi_{0}(2^{j}(x-y))\int_{0}^{1}((x-y)\cdot\nabla)S_{j'-1}\upsilon^{i}(x+\tau(y-x))d\tau\dot{\Delta}_{j'}\theta(y)dy
\nonumber\\&=&\sum_{|j-j'|\leq4}\int_{\mathbb{R}^n}\partial_{i}\varphi_{0}(z)\int_{0}^{1}(z\cdot\nabla)S_{j'-1}\upsilon^{i}(x-\tau2^{-j}z)d\tau\dot{\Delta}_{j'}\theta(x-2^{-j}z)dz,
\end{eqnarray*}
where we have performed the integration by parts. Then, by Young's
inequality in Morrey spaces, we obtain
\begin{eqnarray}
\|K_{1}\|_{M^p_{q}}&\leq& C\|\nabla
\upsilon\|_{L^\infty}\sum_{|j-j'|\leq4}\|\int_{\mathbb{R}^n}|z\nabla\varphi(z)||\dot{\Delta}_{j'}\theta(\cdot-2^{-j}z)dz|\|_{M^p_{q}}
\nonumber\\&\leq& C\|\nabla
\upsilon\|_{L^\infty}\sum_{|j-j'|\leq4}\|\dot{\Delta}_{j'}\theta\|_{M^p_{q}}.\label{R-E11}
\end{eqnarray}
For $K_{2}$, it is clear that
\begin{eqnarray*}
K_{2}=-\sum_{|j-j'|\leq4}\dot{\Delta}_{j}\Big\{(S_{j'-1}\partial_{i}\theta)(\dot{\Delta}_{j'}v^{i})\Big\}
\end{eqnarray*}
which implies that
\begin{eqnarray}
\|K_{2}\|_{M^p_{q}}&\leq&C\sum_{|j-j'|\leq4}\|S_{j'-1}\partial_{i}\theta\|_{M^{p_{1}}_{q_{1}}}\|\dot{\Delta}_{j'}v^{i}\|_{M^{p_{2}}_{q_{2}}}
\nonumber\\&\leq&
C\|\nabla\theta\|_{M^{p_{1}}_{q_{1}}}\sum_{|j-j'|\leq4}\|\dot{\Delta}_{j'}v\|_{M^{p_{2}}_{q_{2}}},\label{R-E12}
\end{eqnarray}
where we used the H\"{o}lder inequality in Morrey spaces with
$1/p=1/p_{1}+1/p_{2}, 1/q\leq1/q_{1}+1/q_{2}$ and $1\leq q_{1}\leq
p_{1}\leq\infty$.

For $K_{3}$, note that
$S_{j'-1}\partial_{i}\dot{\Delta}_{j}\theta=0$, if $j'\leq j$, we
may rewrite
\begin{eqnarray*}
K_{3}=\sum_{j'\geq
j+1}(S_{j'-1}\partial_{i}\dot{\Delta}_{j}\theta)(\dot{\Delta}_{j'}v^{i}),
\end{eqnarray*}
Then the H\"{o}lder inequality gives
\begin{eqnarray}
\|K_{3}\|_{M^p_{q}}&\leq&\sum_{j'\geq
j+1}\|S_{j'-1}\partial_{i}\dot{\Delta}_{j}\theta\|_{M^{p_{1}}_{q_{1}}}\|\dot{\Delta}_{j'}v^{i}\|_{M^{p_{2}}_{q_{2}}}
\nonumber\\&\leq& C\|\nabla\theta\|_{M^{p_{1}}_{q_{1}}}\sum_{j'\geq
j+1}\|\dot{\Delta}_{j'}v\|_{M^{p_{2}}_{q_{2}}}.\label{R-E13}
\end{eqnarray}
For $K_{4}$, by the definition of $\dot{R}$, we may rewrite
\begin{eqnarray*}
K_{4}&=&-\dot{\Delta}_{j}\Big\{\sum_{j'\in\mathbb{Z}}\sum_{|j-j'|\leq1}(\dot{\Delta}_{j'}v^{i})(\dot{\Delta}_{j''}\partial_{i}\theta)\Big\}\nonumber\\&=&
-\sum_{\max(j',j'')\geq
j-2}\sum_{|j-j'|\leq1}\dot{\Delta}_{j}\{(\dot{\Delta}_{j'}v^{i})(\dot{\Delta}_{j''}\partial_{i}\theta)\},
\end{eqnarray*}
which yields
\begin{eqnarray}
\|K_{4}\|_{M^p_{q}}&\leq&\sum_{\max(j',j'')\geq
j-2}\sum_{|j-j'|\leq1}\|\dot{\Delta}_{j''}\partial_{i}\theta\|_{M^{p_{1}}_{q_{1}}}\|\dot{\Delta}_{j'}v^{i}\|_{M^{p_{2}}_{q_{2}}}\nonumber\\&\leq&
C \|\nabla\theta\|_{M^{p_{1}}_{q_{1}}}\sum_{j'\geq
j-2}\|\dot{\Delta}_{j'}v\|_{M^{p_{2}}_{q_{2}}}.\label{R-E14}
\end{eqnarray}
For the last term $K_{5}$, it holds that
\begin{eqnarray*}
K_{5}&=&\sum_{j'\in\mathbb{Z}}\sum_{|j-j'|\leq1}(\dot{\Delta}_{j'}v^{i})(\dot{\Delta}_{j''}\partial_{i}\dot{\Delta}_{j}\theta)\Big\}\nonumber\\&=&
\sum_{|j-j'|\leq2}\sum_{|j-j'|\leq1}\{(\dot{\Delta}_{j'}v^{i})(\dot{\Delta}_{j''}\partial_{i}\dot{\Delta}_{j}\theta)\}.
\end{eqnarray*}
Then, we get
\begin{eqnarray}
\|K_{5}\|_{M^p_{q}}&\leq&\sum_{|j-j'|\leq2}\sum_{|j-j'|\leq1}\|\dot{\Delta}_{j''}\partial_{i}\dot{\Delta}_{j}\theta\|_{M^{p_{1}}_{q_{1}}}\|\dot{\Delta}_{j'}v^{i}\|_{M^{p_{2}}_{q_{2}}}
\nonumber\\&\leq& C\|\nabla\theta\|_{M^{p_{1}}_{q_{1}}}\sum_{j'\geq
j-2}\|\dot{\Delta}_{j'}v\|_{M^{p_{2}}_{q_{2}}}.\label{R-E15}
\end{eqnarray}
Together with these estimates (\ref{R-E11})-(\ref{R-E15}), we
conclude that
\begin{eqnarray}
\|[v\cdot\nabla,\dot{\Delta}_{j}]\theta\|_{M^p_{q}}&\leq& C\|\nabla
v\|_{L^\infty}\sum_{|j-j'|\leq4}\|\dot{\Delta}_{j'}\theta\|_{M^p_{q}}+C\|\nabla\theta\|_{M^{p_{1}}_{q_{1}}}\sum_{|j-j'|\leq4}\|\dot{\Delta}_{j'}v\|_{M^{p_{2}}_{q_{2}}}
\nonumber\\&&+C|\nabla\theta\|_{M^{p_{1}}_{q_{1}}}\sum_{j'\geq
j-2}\|\dot{\Delta}_{j'}v\|_{M^{p_{2}}_{q_{2}}}.\label{R-E16}
\end{eqnarray}
Finally, we apply the Young inequality for sequences to get
(\ref{R-E10}) immediately, where $s>0$ is required. Thus we complete
the proof of Lemma \ref{lem3.4}.
\end{proof}

\begin{lem}\label{lem3.5}
For $s>-1$, $1\leq q\leq p<\infty$ and $ 1\leq r\leq\infty$, there
is a constant $C$ such that
\begin{eqnarray}
\|2^{js}\|[v\cdot\nabla,\dot{\Delta}_{j}]\theta\|_{M^{p}_{q}}\|_{\ell^{r}}\leq
C\Big(\|\nabla
v\|_{L^\infty}\|\theta\|_{\dot{N}^s_{p,q,r}}+\|\theta\|_{M^{p_{1}}_{q_{1}}}\|v\|_{\dot{N}^{s+1}_{p_{2},q_{2},r}}\Big)\label{R-E17}
\end{eqnarray}
holds for all $\theta\in \dot{N}^s_{p,q,r}\cap M^{p_{1}}_{q_{1}}$
and all $v\in \dot{N}^{s+1}_{p_{2},q_{2},r}$ with $\nabla v\in
L^\infty$ such that $\mathrm{div}v=0$, where $1\leq q_{1}\leq
p_{1}\leq\infty$ such that $1/p=1/p_{1}+1/p_{2},
1/q\leq1/q_{1}+1/q_{2}$.
\end{lem}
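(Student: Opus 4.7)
The plan is to keep the Bony decomposition $[v\cdot\nabla,\dot{\Delta}_j]\theta=K_1+K_2+K_3+K_4+K_5$ used in the proof of Lemma \ref{lem3.4}, but to re-estimate each piece so that the derivative is carried by $v$ (via a factor $2^{j'}$ or $2^j$ from Bernstein) instead of by $\theta$. This transfer is what converts $\|\nabla\theta\|_{M^{p_1}_{q_1}}\|v\|_{N^{s}_{p_2,q_2,r}}$ into $\|\theta\|_{M^{p_1}_{q_1}}\|v\|_{\dot{N}^{s+1}_{p_2,q_2,r}}$ and relaxes the regularity condition from $s>0$ to $s>-1$.

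The commutator piece $K_1=[T_{v^i}\partial_i,\dot{\Delta}_j]\theta$ is handled verbatim as in Lemma \ref{lem3.4}: a first-order Taylor expansion exposes $\nabla S_{j'-1}v$, bounded by $\|\nabla v\|_{L^\infty}$, giving $\|K_1\|_{M^p_q}\lesssim\|\nabla v\|_{L^\infty}\sum_{|j-j'|\le 4}\|\dot{\Delta}_{j'}\theta\|_{M^p_q}$, which after weighting by $2^{js}$ and taking $\ell^r$ yields the first term on the right of (\ref{R-E17}) for any $s\in\mathbb{R}$. For the off-diagonal paraproduct terms $K_2=-\dot{\Delta}_j T_{\partial_i\theta}v^i$ and $K_3=T_{\partial_i\dot{\Delta}_j\theta}v^i$, Bernstein (Lemma \ref{lem2.1}) gives $\|\partial_iS_{j'-1}\theta\|_{M^{p_1}_{q_1}}\lesssim 2^{j'}\|\theta\|_{M^{p_1}_{q_1}}$ and $\|\partial_i\dot{\Delta}_j\theta\|_{M^{p_1}_{q_1}}\lesssim 2^j\|\theta\|_{M^{p_1}_{q_1}}$; H\"older in Morrey spaces then produces
\[
\|K_2\|_{M^p_q}\lesssim\|\theta\|_{M^{p_1}_{q_1}}\sum_{|j-j'|\le 4}2^{j'}\|\dot{\Delta}_{j'}v\|_{M^{p_2}_{q_2}},\qquad\|K_3\|_{M^p_q}\lesssim 2^j\|\theta\|_{M^{p_1}_{q_1}}\sum_{j'\ge j+1}\|\dot{\Delta}_{j'}v\|_{M^{p_2}_{q_2}}.
\]
Multiplication by $2^{js}$ recasts the $K_3$-bound as a convolution of $\{2^{j'(s+1)}\|\dot{\Delta}_{j'}v\|_{M^{p_2}_{q_2}}\}$ against the kernel $c_k=2^{-k(s+1)}\mathbf{1}_{k\ge 1}$, which lies in $\ell^1$ exactly when $s>-1$; Young's inequality for sequences then furnishes the factor $\|v\|_{\dot{N}^{s+1}_{p_2,q_2,r}}$.

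The main obstacle is the remainder pieces $K_4=-\dot{\Delta}_jR(v^i,\partial_i\theta)$ and $K_5=R(v^i,\partial_i\dot{\Delta}_j\theta)$: a direct Bernstein estimate on $\partial_i\theta$ inside the remainder leaves a convolution kernel of the form $2^{-ks}\mathbf{1}_{k\ge -3}$ (with $k=j'-j$), which is $\ell^1$-summable only for $s>0$, not for the required $s>-1$. To recover the sharp threshold I use the divergence-free condition: since $\mathrm{div}\,v=0$,
\[
\sum_i R(v^i,\partial_if)=\sum_i\partial_iR(v^i,f)-\sum_i R(\partial_iv^i,f)=\sum_i\partial_iR(v^i,f),
\]
so that $K_4=-\partial_i\dot{\Delta}_jR(v^i,\theta)$ and $\sum_i K_5=\partial_iR(v^i,\dot{\Delta}_j\theta)$. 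Because $\dot{\Delta}_jR(v^i,\theta)$ is spectrally localized in an annulus of radius $\sim 2^j$, Bernstein converts the outer $\partial_i$ into a factor $2^j$, and H\"older in Morrey spaces then gives $\|K_4\|_{M^p_q}\lesssim 2^j\|\theta\|_{M^{p_1}_{q_1}}\sum_{j'\ge j-3}\|\dot{\Delta}_{j'}v\|_{M^{p_2}_{q_2}}$, with a finite-band analogue for $K_5$. The resulting $2^{js}$-weighted convolution kernel is $2^{-k(s+1)}\mathbf{1}_{k\ge -3}$, which now belongs to $\ell^1$ precisely under $s>-1$.

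Summing the five estimates and taking the $\ell^r$-norm in $j$ via Young's inequality delivers (\ref{R-E17}). The critical novelty compared with Lemma \ref{lem3.4} is the systematic exploitation of $\mathrm{div}\,v=0$ on the remainder terms; without it the argument would stall at $s>0$, and the entire point of Lemma \ref{lem3.5} is the improvement to the borderline range $s>-1$.
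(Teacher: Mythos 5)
Your proof is correct and follows essentially the same route as the paper: the same five-term Bony decomposition with $K_{1}$ reused verbatim, the derivative shifted off $\theta$ so that each remaining piece carries a factor $2^{j}$ (or $2^{j'}$ on a finite band), and the divergence-free condition invoked exactly where the naive bound would force $s>0$, leading to the convolution kernel $2^{-k(s+1)}$ and the threshold $s>-1$. The only cosmetic difference is that the paper realizes the div-free step by integrating by parts against the convolution kernel of $\dot{\Delta}_{j}$, whereas you commute the divergence out of the paraproduct/remainder and apply Bernstein to the spectrally localized output; the resulting estimates are identical.
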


\begin{proof}
As in the proof of Lemma \ref{lem3.4}, we decompose
$$[v\cdot\nabla,\dot{\Delta}_{j}]\theta:=K_{1}+K_{2}+K_{3}+K_{4}+K_{5},$$
The estimate of $K_{1}$ is still valid, however, different estimates
and needed for $K_{2},K_{3},K_{4}$ and $K_{5}$.

For $K_{2}$, since $\mathrm{div}\dot{\Delta}_{j'}v=0$\ for all
$j'\in \mathbb{Z}$, by integration by parts, we have
\begin{eqnarray*}
K_{2}&=&-\sum_{|j-j'|\leq4}\dot{\Delta}_{j}\Big\{(S_{j'-1}\partial_{i}\theta)(\dot{\Delta}_{j'}v^{i})\Big\}\nonumber\\
&=&-\sum_{|j-j'|\leq4}2^{jn}\int_{R^{n}}\varphi_{0}(2^{j}(x-y))(S_{j'-1}\partial_{i}\theta)(y)\dot{\Delta}_{j'}v^{i}(y)dy
\nonumber\\&=&-\sum_{|j-j'|\leq4}2^{j}2^{jn}\int_{R^{n}}\partial_{i}\varphi_{0}(2^{j}(x-y))(S_{j'-1}\theta)(y)\dot{\Delta}_{j'}v^{i}(y)dy
\nonumber\\&=&-\sum_{|j-j'|\leq4}2^{j}\int_{R^{n}}\partial_{i}\varphi_{0}(z)S_{j'-1}\theta(x-2^{-j}z)\dot{\Delta}_{j'}v^{i}(x-2^{-j}z)dz.
\end{eqnarray*}
Then by Young's and H\"{o}lder inequalities in Morrey spaces, we
obtain
\begin{eqnarray}
\|K_{2}\|_{M^p_{q}}&\leq&\sum_{|j-j'|\leq4}2^{j}\Big\|\int_{R^{n}}\partial_{i}\varphi_{0}(z)S_{j'-1}\theta(\cdot-2^{-j}z)\dot{\Delta}_{j'}v^{i}(\cdot-2^{-j}z)dz\Big\|_{M^p_{q}}
\nonumber\\&\leq&C\sum_{|j-j'|\leq4}2^{j}\|(S_{j'-1}\theta)(\dot{\Delta}_{j'}v^{i})\|_{M^p_{q}}\nonumber\\&\leq&C
\|\theta\|_{M^{p_{1}}_{q_{1}}}\sum_{|j-j'|\leq4}2^{j}\|\dot{\Delta}_{j'}v^{i}\|_{M^{p_{2}}_{q_{2}}}.\label{R-E18}
\end{eqnarray}
Using the Bernstein's inequality, we proceed $K_{3}$ as follows:
\begin{eqnarray}
\|K_{3}\|_{M^p_{q}}&\leq&\sum_{j'\geq
j+1}\|S_{j'-1}\partial_{i}\dot{\Delta}_{j}\theta\|_{M^{p_{1}}_{q_{1}}}\|\dot{\Delta}_{j'}v^{i}\|_{M^{p_{2}}_{q_{2}}}
\nonumber\\&\leq&C\sum_{j'\geq
j+1}2^{j}\|\dot{\Delta}_{j}\theta\|_{M^{p_{1}}_{q_{1}}}\|\dot{\Delta}_{j'}v^{i}\|_{M^{p_{2}}_{q_{2}}}
\nonumber\\&\leq&C\|\theta\|_{M^{p_{1}}_{q_{1}}}\sum_{j'\geq
j+1}2^{j}\|\dot{\Delta}_{j'}v^{i}\|_{M^{p_{2}}_{q_{2}}}.\label{R-E20}
\end{eqnarray}
For $K_{4}$, we have by integration by parts
\begin{eqnarray*}
K_{4}&=&-\sum_{\max(j',j'')\geq
j-2}\sum_{|j'-j''|\leq1}2^{jn}\int_{R^{n}}\varphi_{0}(2^{j}(x-y))(\dot{\Delta}_{j'}v^{i})(y)(\dot{\Delta}_{j''}\partial_{i}\theta)(y)dy
\nonumber\\&=&-\sum_{\max(j',j'')\geq
j-2}\sum_{|j'-j''|\leq1}2^{j}2^{jn}\int_{R^{n}}\partial_{i}\varphi_{0}(2^{j}(x-y))(\dot{\Delta}_{j'}v^{i})(y)
(\dot{\Delta}_{j''}\theta)(y)dy\nonumber\\&=&-\sum_{\max(j',j'')\geq
j-2}\sum_{|j'-j''|\leq1}2^{j}\int_{R^{n}}\partial_{i}\varphi_{0}(z)
(\dot{\Delta}_{j'}v^{i})(x-2^{-j}z)(\dot{\Delta}_{j''}\theta)(x-2^{-j}z)dz,
\end{eqnarray*}
which leads to
\begin{eqnarray}
\|K_{4}\|_{M^p_{q}}&\leq&\sum_{\max(j',j'')\geq
j-2}\sum_{|j'-j''|\leq1}2^{j}\|(\dot{\Delta}_{j'}v^{i})(\dot{\Delta}_{j''}\theta)\|_{M^p_{q}}
\nonumber\\&\leq&C\|\theta\|_{M^{p_{1}}_{q_{1}}}\sum_{j'\geq
j-2}2^{j}\|\dot{\Delta}_{j'}v\|_{M^{p_{2}}_{q_{2}}}.\label{R-E21}
\end{eqnarray}
For the estimate of $K_{5}$, we recall
$$K_{5}=\sum_{|j-j'|\leq2}\sum_{|j-j'|\leq1}\{(\dot{\Delta}_{j'}v^{i})(\dot{\Delta}_{j''}\partial_{i}\dot{\Delta}_{j}\theta)\}.$$
furthermore, we get
\begin{eqnarray}
\|K_{5}\|_{M^p_{q}}&\leq&\sum_{|j-j'|\leq2}\sum_{|j-j'|\leq1}\|\dot{\Delta}_{j''}\partial_{i}\dot{\Delta}_{j}\theta\|_{M^{p_{1}}_{q_{1}}}\|\dot{\Delta}_{j'}v^{i}\|_{M^{p_{2}}_{q_{2}}}
\nonumber\\&\leq&C\sum_{|j-j'|\leq2}2^{j}\|\dot{\Delta}_{j}\theta\|_{M^{p_{1}}_{q_{1}}}\|\dot{\Delta}_{j'}v^{i}\|_{M^{p_{2}}_{q_{2}}}
\nonumber\\&\leq&C\|\theta\|_{M^{p_{1}}_{q_{1}}}\sum_{|j-j'|\leq2}2^{j}\|\dot{\Delta}_{j'}v\|_{M^{p_{2}}_{q_{2}}}.\label{R-E22}
\end{eqnarray}
From these inequalities (\ref{R-E11}), (\ref{R-E18})-(\ref{R-E22}),
we are led to
\begin{eqnarray}
\|[v\cdot\nabla,\dot{\Delta}_{j}]\theta\|_{M^p_{q}}&\leq& C\|\nabla
v\|_{L^\infty}\sum_{|j-j'|\leq4}\|\dot{\Delta}_{j'}\theta\|_{M^p_{q}}+C\|\theta\|_{M^{p_{1}}_{q_{1}}}\sum_{|j-j'|\leq4}2^{j}\|\dot{\Delta}_{j'}v\|_{M^{p_{2}}_{q_{2}}}
\nonumber\\&&+C|\theta\|_{M^{p_{1}}_{q_{1}}}\sum_{j'\geq
j-2}2^{j}\|\dot{\Delta}_{j'}v\|_{M^{p_{2}}_{q_{2}}},\label{R-E23}
\end{eqnarray}
which implies that
\begin{eqnarray}
\|2^{js}\|[v\cdot\nabla,\dot{\Delta}_{j}]\theta\|_{M^{p}_{q}}\|_{\ell^{r}}\leq
C\Big(\|\nabla
v\|_{L^\infty}\|\theta\|_{\dot{N}^s_{p,q,r}}+\|\theta\|_{M^{p_{1}}_{q_{1}}}\|v\|_{\dot{N}^{s+1}_{p_{2},q_{2},r}}\Big)
\label{R-E24}
\end{eqnarray}
where $s+1>0$ is required. This just the inequality (\ref{R-E17}).
Hence the proof of Lemma \ref{lem3.5} is finished.
\end{proof}

\section{Proof of Theorem \ref{thm1.1}}
\label{sec:4} \setcounter{equation}{0}

 In this section, we begin to
prove Theorem \ref{thm1.1} with the aid of key Lemmas
\ref{lem3.1}-\ref{lem3.5}. The proof is divided into several steps,
since it is a bit longer.\\

Step 1: \textit{\underline{The linear equation of (\ref{R-E1})}}

Consider the linear transport system as in \cite{Z}:
\begin{equation}
\left\{
\begin{array}{l}
\partial_{t}\upsilon+(w\cdot\nabla)\upsilon+\nabla P = 0,\\
\mathrm{div}\upsilon=0,\\
\upsilon(x,0)=\upsilon_{0}(x).
\end{array} \right.\label{R-E25}
\end{equation}
Then we have following local existence result for (\ref{R-E25}),
which will be proved in the last step.
\begin{prop}\label{prop4.1}
Assume that $\mathrm{div}w=0, w\in L^{\infty}(0,T,N^{s}_{p,q,r})$
for some $T>0, s>1+n/p, 1<q\leq p<\infty, r\in[1,\infty]$ or
$s=1+n/p, 1<q\leq p<\infty$ and $r=1$. Then for any $\upsilon_{0}\in
N^{s}_{p,q,r}$ satisfying $\mathrm{div}\upsilon_{0}=0$, there exists
a unique solution $\upsilon\in C([0,T];N^{s}_{p,q,r})$ to the linear
system (\ref{R-E25}). And consequently, $\nabla P$ can be determined
uniquely.
\end{prop}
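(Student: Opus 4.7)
\medskip

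\noindent\textbf{Proof plan for Proposition \ref{prop4.1}.}

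The plan is to solve (\ref{R-E25}) by a Friedrichs-type approximation combined with \textit{a priori} estimates obtained from the characteristic flow of $w$. First, since $\mathrm{div}\,v=0$ and $\mathrm{div}\,w=0$, taking the divergence of (\ref{R-E25}) forces $P$ to solve $\Delta P=-\partial_i w^j\,\partial_j v^i$, so that $\nabla P=-\nabla(-\Delta)^{-1}\partial_i\partial_j(w^j v^i)$ is uniquely determined by $(w,v)$. Equivalently, $v$ satisfies $\partial_t v+\mathbb{P}[(w\cdot\nabla)v]=0$, where $\mathbb{P}$ is the Leray projector. I would therefore first verify that the Riesz-type operators appearing in $\mathbb{P}$ are bounded on $\dot N^{s}_{p,q,r}$ and $N^{s}_{p,q,r}$ (these are classical multipliers which act on each dyadic block $\dot\Delta_j$ boundedly in $M^p_q$ by the Bernstein-type Lemma \ref{lem2.1}, hence on Besov--Morrey spaces), together with the product estimate from Lemma \ref{lem3.3} applied to $w^j v^i$, to get
\begin{equation*}
\|\nabla P\|_{N^{s}_{p,q,r}}\lesssim \|w\|_{N^{s}_{p,q,r}}\|v\|_{N^{s}_{p,q,r}}.
\end{equation*}

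The second step is the \textit{a priori} estimate. Let $X(\alpha,t)$ be the flow of $w$ defined by (\ref{R-E3}) with $w$ in place of $\upsilon$; since $\mathrm{div}\,w=0$, $X(\cdot,t)$ is a volume-preserving diffeomorphism and Lemma \ref{lem3.1} yields $\|f\circ X(\cdot,t)\|_{M^p_q}=\|f\|_{M^p_q}$. Applying $\dot\Delta_j$ to (\ref{R-E25}) produces the frequency-localized equation
\begin{equation*}
\partial_t\dot\Delta_j v+(w\cdot\nabla)\dot\Delta_j v+[\dot\Delta_j,\,w\cdot\nabla]v+\dot\Delta_j\nabla P=0,
\end{equation*}
and composing with $X$ the first two terms become a total time derivative along the flow. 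Integrating in time and taking $M^p_q$ norms gives
\begin{equation*}
\|\dot\Delta_j v(t)\|_{M^p_q}\le \|\dot\Delta_j v_0\|_{M^p_q}+\int_0^t\!\Bigl(\|[\dot\Delta_j,w\cdot\nabla]v\|_{M^p_q}+\|\dot\Delta_j\nabla P\|_{M^p_q}\Bigr)\,d\tau.
\end{equation*}
Multiplying by $2^{js}$, taking the $\ell^r(\mathbb{Z})$ norm and invoking the commutator estimate (\ref{R-E10}) of Lemma \ref{lem3.4} (in the super-critical case $s>1+n/p$) or (\ref{R-E17}) of Lemma \ref{lem3.5} together with Lemma \ref{lem2.3} (in the critical case $s=1+n/p,\ r=1$, where $\nabla w\in L^\infty$ follows from the embedding $N^{1+n/p}_{p,q,1}\hookrightarrow B^{1}_{\infty,1}\hookrightarrow W^{1,\infty}$ given by Lemma \ref{lem2.4}), plus the pressure bound above, I obtain
\begin{equation*}
\|v(t)\|_{N^{s}_{p,q,r}}\le \|v_0\|_{N^{s}_{p,q,r}}+C\int_0^t\|w(\tau)\|_{N^{s}_{p,q,r}}\|v(\tau)\|_{N^{s}_{p,q,r}}\,d\tau.
\end{equation*}
Gronwall's inequality then yields $\|v\|_{L^\infty(0,T;N^{s}_{p,q,r})}\le \|v_0\|_{N^{s}_{p,q,r}}\exp\bigl(C\|w\|_{L^1(0,T;N^{s}_{p,q,r})}\bigr)$.

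For existence, I would implement a Friedrichs mollification $w_n=J_n w$, $v_0^n=J_n v_0$ (where $J_n$ is the cutoff $\mathcal{F}^{-1}\mathbf{1}_{|\xi|\le n}\mathcal{F}$), solving the linear ODE system in $L^2_n=\mathrm{Range}(J_n)$; the pressure in each regularized system is eliminated by $\mathbb{P}_n=J_n\mathbb{P}$. The previous \textit{a priori} bound is uniform in $n$, so $\{v_n\}$ is bounded in $L^\infty(0,T;N^{s}_{p,q,r})$; a standard weak-$*$ compactness argument combined with strong convergence of $v_n$ in $C([0,T];N^{s-1}_{p,q,r,\mathrm{loc}})$ (coming from the equation $\partial_t v_n\in L^\infty(0,T;N^{s-1}_{p,q,r})$) allows one to pass to the limit in the nonlinearity and recover a solution of (\ref{R-E25}).

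Uniqueness and strong continuity are treated by a standard energy-type argument applied to the difference $v^1-v^2$ of two solutions in the slightly weaker norm $N^{s-1}_{p,q,r}$ (or $M^p_q$): subtracting the equations, applying $\dot\Delta_j$, invoking the commutator Lemma \ref{lem3.5} and the $\mathbb{P}$-boundedness, and concluding by Gronwall gives $v^1\equiv v^2$. Time continuity $v\in C([0,T];N^{s}_{p,q,r})$ then follows from the equation by the usual argument (weak continuity plus norm continuity, the latter coming from the closed \textit{a priori} estimate). I expect the main technical obstacle to be the critical endpoint $s=1+n/p,\ r=1$, where the logarithmic loss in Lemma \ref{lem3.2} is borderline and one must use (\ref{R-E17}) rather than (\ref{R-E10}) together with the Besov--Morrey embedding into Lipschitz functions to keep $\|\nabla w\|_{L^\infty}$ finite and controlled by $\|w\|_{N^{1+n/p}_{p,q,1}}$.
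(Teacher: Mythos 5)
Your core estimates coincide with the paper's: you eliminate the pressure through the Leray projector and bound $\nabla P$ by Riesz-transform boundedness on Morrey spaces (this is exactly where the hypothesis $q>1$ enters) together with Lemma \ref{lem3.3}, and you obtain the a priori bound by composing the frequency-localized equation with the volume-preserving flow of $w$, invoking Lemma \ref{lem3.1}, the commutator estimates of Lemmas \ref{lem3.4}--\ref{lem3.5}, and Gronwall — this is precisely the chain (\ref{R-E85})--(\ref{R-E88}) in Step 6 of the paper. Where you genuinely diverge is the existence mechanism: the paper approximates (\ref{R-E83}) by the explicit Picard-type iteration (\ref{R-E84}), in which the transport term is evaluated at the previous iterate so each step is solved by direct time integration, and then closes by uniform bounds plus a Cauchy-in-$N^{s-1}_{p,q,r}$ argument; you instead use Friedrichs spectral truncation and weak-$*$ compactness. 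Your route is standard and can be made to work, but two points need care in this Morrey-based setting. First, the truncated system cannot be posed ``in $L^2_n$'': neither $v_0$ nor $w$ need belong to $L^2$, so the Cauchy--Lipschitz step must be run instead in the Banach space $J_n(M^p_q)$ or $J_n(N^s_{p,q,r})$, where the truncated operator is bounded by Bernstein's inequality (Lemma \ref{lem2.1}) and the boundedness of the projector on $M^p_q$ for $q>1$. Second, weak-$*$ compactness in $L^\infty(0,T;N^s_{p,q,r})$ presupposes a predual of the Besov--Morrey space, which is not provided in the paper (and is delicate for $r=1$, which is exactly the critical case you must cover); since the problem is linear in $v$, it is simpler — and it is what the paper implicitly does — to show the approximations form a Cauchy sequence in $C([0,T];N^{s-1}_{p,q,r})$ and recover the $N^s_{p,q,r}$ regularity of the limit from the uniform bound. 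Also, Lemma \ref{lem3.2} plays no role here: in the critical case $s=1+n/p$, $r=1$ the Lipschitz bound on $w$ comes directly from the embedding of Lemma \ref{lem2.4}, as you in fact note. With these adjustments your plan is a valid, essentially equivalent proof whose only real difference from the paper is the choice of approximation scheme.
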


Step 2: \textit{\underline{Approximate solutions and uniform
estimates}}

The proof of main theorem depends on the standard iteration
argument. To obtain the approximate solutions, we first set
$\upsilon^{0}=0$ and then define $\{\upsilon^{m+1}\}$ as the
solutions of the following linear system
\begin{equation}
\left\{
\begin{array}{l}
\partial_{t}\upsilon^{m+1}+(\upsilon^{m}\cdot\nabla)\upsilon^{m+1}+\nabla P^{m+1} = 0,\\
\mathrm{div}\upsilon^{m+1}=0,\ \ \mathrm{div}\upsilon^{m}=0,\\
\upsilon^{m+1}(x,0)=S_{m+1}\upsilon_{0}(x),
\end{array} \right.\label{R-E26}
\end{equation}
for $m=0,1,2,...$. In \cite{C1}, Chae took a similar (not same)
iterative system to construct the local solution. But unfortunately,
the linear system (3.32)-(3.33) on p.671 of \cite{C1} is unsolvable,
since the system itself lacks consistence.

If we have the uniform estimate for the sequence $\{\upsilon^{m}\}$
by induction, which satisfies the assumptions in Proposition
\ref{prop4.1}, then the system (\ref{R-E26}) can be solved with
solution $\{\upsilon^{m+1}\}$.

For that purpose, we turn to derive the uniform estimates of
solutions. Applying the homogeneous operator
$\dot{\Delta}_{j}(j\in\mathbb{Z})$ to the first equation of
(\ref{R-E26}), we have
\begin{equation}
\partial_{t}\dot{\Delta}_{j}\upsilon^{m+1}+(\upsilon^{m}\cdot\nabla)\dot{\Delta}_{j}\upsilon^{m+1}
=[\upsilon^{m}\cdot\nabla,\dot{\Delta}_{j}]\upsilon^{m+1}-\dot{\Delta}_{j}\nabla
P^{m+1}.\label{R-E27}
\end{equation}
Define by $\{X^{m}(\alpha,t)\}$ the family of particle trajectory
mapping as follows
\begin{equation}
\left\{
\begin{array}{l}
\partial_{t}X^{m}(\alpha,t)=\upsilon^{m}(X^{m}(\alpha,t),t),\\
X^{m}(\alpha,0)=\alpha.
\end{array} \right.\label{R-E28}
\end{equation}
Note that $\mathrm{div}\upsilon^{m}=0$ implies that each
$\alpha\mapsto X^{m}(\alpha,t)$ is a volume-preserving mapping for
all $t>0$. It follows from the particle trajectory mapping that
\begin{equation}
\partial_{t}\dot{\Delta}_{j}\upsilon^{m+1}+(\upsilon^{m}\cdot\nabla)\dot{\Delta}_{j}\upsilon^{m+1}\Big|_{(x,t)
=(X^{m}(\alpha,t),t)}=\frac{\partial}{\partial{t}}\dot{\Delta}_{j}\upsilon^{m+1}(X^{m}(\alpha,t),t)\label{R-E29}
\end{equation}
which gives
\begin{eqnarray}
|\dot{\Delta}_{j}\upsilon^{m+1}(X^{m}(\alpha,t),t)|&\leq&|\dot{\Delta}_{j}\upsilon^{m+1}(\alpha,0)|+\int^{t}_{0}|\dot{\Delta}_{j}\nabla
P^{m+1}(X^{m}(\alpha,\tau),\tau)|d\tau
\nonumber\\&&+\int^{t}_{0}|[\upsilon^{m}\cdot\nabla,\dot{\Delta}_{j}]\upsilon^{m+1}(X^{m}(\alpha,\tau),\tau)|d\tau.\label{R-E30}
\end{eqnarray}
Taking the $M^{p}_{q}$ norm ($1\leq q<p<\infty$) on both sides of
(\ref{R-E30}), with the help of Lemma \ref{lem3.1}, we get
\begin{eqnarray}
\|\dot{\Delta}_{j}\upsilon^{m+1}(t)\|_{M^{p}_{q}}&\leq&
\|\dot{\Delta}_{j}\upsilon^{m+1}_{0}\|_{M^{p}_{q}}+C\int^{t}_{0}\|\dot{\Delta}_{j}\nabla
P^{m+1}(\tau)\|_{M^{p}_{q}}d\tau
\nonumber\\&&+C\int^{t}_{0}\|[\upsilon^{m}\cdot\nabla,\dot{\Delta}_{j}]\upsilon^{m+1}(\tau)\|_{M^{p}_{q}}d\tau.\label{R-E31}
\end{eqnarray}
Then, we multiply both sides by $2^{js}$ and take the $\ell^{r}$
norm, and use Minkowski's inequality to obtain
\begin{eqnarray}
\|\upsilon^{m+1}(t)\|_{\dot{N}^s_{p,q,r}}&\leq&
\|\upsilon^{m+1}_{0}\|_{\dot{N}^s_{p,q,r}}+C\int^{t}_{0}\|\nabla
P^{m+1}(\tau)\|_{\dot{N}^s_{p,q,r}}d\tau
\nonumber\\&&+C\int^{t}_{0}\Big\|2^{js}\|[\upsilon^{m}\cdot\nabla,\dot{\Delta}_{j}]\upsilon^{m+1}(\tau)\|_{M^{p}_{q}}\Big\|_{\ell^{r}}d\tau.\label{R-E32}
\end{eqnarray}

Thanks to the commutator estimate in Lemma \ref{lem3.4}, by taking
$p_{1}=\infty$ and $q_{1}=q_{2}=q$, we have
\begin{eqnarray}
&&\|2^{js}\|[\upsilon^{m}\cdot\nabla,\dot{\Delta}_{j}]\upsilon^{m+1}(\tau)\|_{M^{p}_{q}}\Big\|_{\ell^{r}}\nonumber\\&\leq&
C\Big(\|\nabla\upsilon^{m}\|_{L^\infty}\|\upsilon^{m+1}\|_{\dot{N}^{s}_{p,q,r}}+\|\nabla\upsilon^{m+1}\|_{L^\infty}\|\upsilon^{m}\|_{\dot{N}^{s}_{p,q,r}}\Big)
\nonumber\\&\leq&C\|\upsilon^{m}\|_{\dot{N}^{s}_{p,q,r}}\|\upsilon^{m+1}\|_{\dot{N}^{s}_{p,q,r}},\label{R-E33}
\end{eqnarray}
where we have used the Sobolev embedding relations
$\dot{N}^{s-1}_{p,q,r}\hookrightarrow L^{\infty}$ for $s>n/p+1,
1\leq q\leq p<\infty, 1\leq r\leq\infty$ \ or\ $s=n/p+1, 1\leq q\leq
p<\infty$ and $r=1$.

Next we turn our attention to the estimates for the pressure term.
Taking the divergence on both sides of (\ref{R-E26}), we have
\begin{eqnarray}
-\Delta
P^{m+1}=\mathrm{div}(\upsilon^{m}\cdot\nabla)\upsilon^{m+1}\label{R-E34}
\end{eqnarray}
which implies
\begin{eqnarray}
\partial_{i}\partial_{j}P^{m+1}=R_{i}R_{j}\mathrm{div}(\upsilon^{m}\cdot\nabla)\upsilon^{m+1},\label{R-E35}
\end{eqnarray}
where $R_{i}(i=1,2,...,n)$ are the $n$-dimensional Riesz transform.
Since $\mathrm{div}\upsilon^{m}=0$, we obtain
\begin{eqnarray}
\mathrm{div}(\upsilon^{m}\cdot\nabla)\upsilon^{m+1}=\sum^{n}_{k,l=1}\partial_{k}\upsilon^{m}_{l}\partial_{l}\upsilon^{m+1}_{k}=\sum^{n}_{k,l=1}\partial_{l}(\partial_{k}\upsilon^{m}_{l}\upsilon^{m+1}_{k}).\label{R-E36}
\end{eqnarray}
Thus, by Bernstein's lemma, we arrive at
\begin{eqnarray}
\|\nabla P^{m+1}\|_{\dot{N}^s_{p,q,r}}&\leq&
C\sum^{n}_{i,j=1}\|\partial_{i}\partial_{j}P^{m+1}\|_{\dot{N}^{s-1}_{p,q,r}}\nonumber\\&\leq&C\sum^{n}_{i,j,k,l=1}\|R_{i}R_{j}\partial_{k}\upsilon^{m}_{l}\partial_{l}\upsilon^{m+1}_{k}\|_{\dot{N}^{s-1}_{p,q,r}}
\nonumber\\&\leq&C\sum^{n}_{k,l=1}\|\partial_{k}\upsilon^{m}_{l}\partial_{l}\upsilon^{m+1}_{k}\|_{\dot{N}^{s-1}_{p,q,r}}\nonumber\\&\leq&C
\|\nabla\upsilon^{m}\|_{L^\infty}\|\nabla\upsilon^{m+1}\|_{\dot{N}^{s-1}_{p,q,r}}+\|\nabla\upsilon^{m+1}\|_{L^\infty}\|\nabla\upsilon^{m}\|_{\dot{N}^{s-1}_{p,q,r}}
\nonumber\\&\leq&C\|\upsilon^{m}\|_{\dot{N}^{s}_{p,q,r}}\|\upsilon^{m+1}\|_{\dot{N}^{s}_{p,q,r}},\label{R-E37}
\end{eqnarray}
where we have taken $p_{1}=p_{3}=\infty,\ p_{2}=p_{4}=p$ and
$q_{1}=q_{3}=\infty,\ q_{2}=q_{4}=q$ in Lemma \ref{lem3.3}.

It follows from (\ref{R-E32}), (\ref{R-E33}) and (\ref{R-E37}) that
\begin{eqnarray}
\|\upsilon^{m+1}(t)\|_{\dot{N}^s_{p,q,r}}&\leq&
\|\upsilon^{m+1}_{0}\|_{\dot{N}^s_{p,q,r}}+C\int^{t}_{0}\|\upsilon^{m}(\tau)\|_{N^{s}_{p,q,r}}\|\upsilon^{m+1}(\tau)\|_{N^{s}_{p,q,r}}d\tau
\nonumber\\&\leq&C\|\upsilon_{0}\|_{\dot{N}^s_{p,q,r}}+C\int^{t}_{0}\|\upsilon^{m}(\tau)\|_{N^{s}_{p,q,r}}\|\upsilon^{m+1}(\tau)\|_{N^{s}_{p,q,r}}d\tau.
\label{R-E38}
\end{eqnarray}

Moreover, in order to show the estimate in the inhomogeneous
Besov-Morrey spaces $N^s_{p,q,r}$, we need to bound
$\|\upsilon^{m+1}(t)\|_{M^{p}_{q}}$. Similarly, we have
\begin{eqnarray}
|\upsilon^{m+1}(X^{m}(\alpha,t),t)|\leq|\upsilon^{m+1}(\alpha,0)|+\int^{t}_{0}|\nabla
P^{m+1}(X^{m}(\alpha,\tau),\tau)|d\tau.\label{R-E39}
\end{eqnarray}
Furthermore, from Lemma \ref{lem3.1}, we get
\begin{eqnarray}
\|\upsilon^{m+1}(t)\|_{M^{p}_{q}}\leq
\|\upsilon^{m+1}(0)\|_{M^{p}_{q}}+C\int^{t}_{0}\|\nabla
P^{m+1}(\tau)\|_{M^{p}_{q}}d\tau,\label{R-E40}
\end{eqnarray}
where the pressure can be estimate as follows
\begin{eqnarray}
\|\nabla P^{m+1}\|_{M^{p}_{q}}&\leq&
C\sum_{k=1}^{n}\Big\|\nabla(-\Delta)^{-1}\partial_{k}\Big\{(\upsilon^{m}\cdot\nabla)\upsilon_{k}^{m+1}\Big\}\Big\|_{M^{p}_{q}}
\nonumber\\&\leq&
C\|(\upsilon^{m}\cdot\nabla)\upsilon_{k}^{m+1}\|_{M^{p}_{q}}\nonumber\\&\leq&C\|\upsilon^{m}\|_{M^{p}_{q}}\|\nabla\upsilon^{m+1}\|_{L^\infty}\nonumber\\&\leq&
C\|\upsilon^{m}\|_{N^{s}_{p,q,r}}\|\upsilon^{m+1}\|_{N^{s}_{p,q,r}}\label{R-E41}
\end{eqnarray}
Substituting (\ref{R-E41}) into (\ref{R-E40}), we have
\begin{eqnarray}
\|\upsilon^{m+1}(t)\|_{M^{p}_{q}}\leq
C\|\upsilon(0)\|_{M^{p}_{q}}+C\int^{t}_{0}\|\upsilon^{m}(\tau)\|_{N^{s}_{p,q,r}}\|\upsilon^{m+1}(\tau)\|_{N^{s}_{p,q,r}}d\tau.\label{R-E42}
\end{eqnarray}
Therefore, adding (\ref{R-E38}) to (\ref{R-E42}) together, by Lemma
\ref{lem2.3}, we are led to the inhomogeneous space estimate
\begin{eqnarray}
\|\upsilon^{m+1}(t)\|_{N^s_{p,q,r}}\leq
C\|\upsilon_{0}\|_{N^s_{p,q,r}}+C\int^{t}_{0}\|\upsilon^{m}(\tau)\|_{N^{s}_{p,q,r}}\|\upsilon^{m+1}(\tau)\|_{N^{s}_{p,q,r}}d\tau.\label{R-E43}
\end{eqnarray}
It follows from Gronwall's inequality that
\begin{eqnarray}
\|\upsilon^{m+1}(t)\|_{N^s_{p,q,r}}\leq
C\|\upsilon_{0}\|_{N^s_{p,q,r}}\exp\Big(C\int^{t}_{0}\|\upsilon^{m}(\tau)\|_{N^{s}_{p,q,r}}d\tau\Big),\label{R-E44}
\end{eqnarray}
where the generic constant $C>0$ maybe depend on $n$ and $p,q$, but
it is independent of $m$. Therefore we can obtain the uniform
estimates by induction.

In fact, we take $C_{1}>0$ such that
$$\|\upsilon_{0}\|_{N^s_{p,q,r}}\leq \frac{C_{1}}{2C},$$ then the following
inequality holds
\begin{eqnarray}
\|\upsilon^{m}\|_{L^\infty_{T_{1}}(N^s_{p,q,r})}\leq
C_{1},\label{R-E45}
\end{eqnarray}
for all $m\geq0$, provided that $T_{1}>0$ (independent of $m$) is
sufficiently small.

(\ref{R-E45}) can be shown easily by the standard induction. First,
it is true to for $m=0$. Suppose (\ref{R-E45}) holds for $m>0$, it
follows from (\ref{R-E44}) that
\begin{eqnarray}
\|\upsilon^{m+1}(t)\|_{N^s_{p,q,r}}\leq\frac{C_{1}}{2}\exp\Big(C\int^{T}_{0}\|\upsilon^{m}(\tau)\|_{N^{s}_{p,q,r}}d\tau\Big)\leq\frac{C_{1}}{2}\exp(CC_{1}T),
\ \ \mbox{for}\ t\in[0,T].\label{R-E46}
\end{eqnarray}
Hence, (\ref{R-E45}) holds, if we choose $T_{1}>0$ so small that
$\exp(CC_{1}T_{1})\leq2$. Moreover, $T_{1}$ is independent of $m$.\\

Step 3: \textit{\underline{Convergence and existence}}

To prove the convergence, it is sufficient to estimate the
difference of the iteration. Set

$$u^{m+1}=\upsilon^{m+1}-\upsilon^{m},\ \ \ \nabla\Pi^{m+1}=\nabla P^{m+1}-\nabla
P^{m}.$$ Then we take the difference between the equation
(\ref{R-E26}) for the $(m+1)$-th step and the $m$-th step to get

\begin{equation}
\left\{
\begin{array}{l}
\partial_{t}u^{m+1}+(\upsilon^{m}\cdot\nabla)u^{m+1}+(u^{m}\cdot\nabla)\upsilon^{m}+\nabla \Pi^{m+1} = 0,\\
\mathrm{div}u^{m+1}=0,\ \ \mathrm{div}\upsilon^{m}=0,\\
u^{m+1}(x,0)=S_{m+1}\upsilon_{0}(x)-S_{m}\upsilon_{0}(x)=\Delta_{m}\upsilon_{0}(x).
\end{array} \right.\label{R-E47}
\end{equation}

Taking $\dot{\Delta}_{j}(j\in\mathbb{Z})$ on the first equation of
(\ref{R-E47}), we obtain
\begin{equation}
\partial_{t}\dot{\Delta}_{j}u^{m+1}+(\upsilon^{m}\cdot\nabla)\dot{\Delta}_{j}u^{m+1}=[\upsilon^{m}\cdot\nabla,\dot{\Delta}_{j}]u^{m+1}
-\dot{\Delta}_{j}((u^{m}\cdot\nabla)\upsilon^{m})-\dot{\Delta}_{j}\nabla
\Pi^{m+1}.\label{R-E48}
\end{equation}
By the definition of $X^{m}$, similar to (\ref{R-E30}), we arrive at
\begin{eqnarray}
&&|\dot{\Delta}_{j}u^{m+1}(X^{m}(\alpha,t),t)|\nonumber\\&\leq&|\dot{\Delta}_{j}u^{m+1}(\alpha,0)|+\int^{t}_{0}|[\upsilon^{m}\cdot\nabla,\dot{\Delta}_{j}]u^{m+1}(X^{m}(\alpha,\tau),\tau)|d\tau
\nonumber\\&&+\int^{t}_{0}|\dot{\Delta}_{j}((u^{m}\cdot\nabla)\upsilon^{m})(X^{m}(\alpha,\tau),\tau)|d\tau+\int^{t}_{0}|\dot{\Delta}_{j}\nabla
\Pi^{m+1}(X^{m}(\alpha,\tau),\tau)|d\tau. \label{R-E49}
\end{eqnarray}
With the help of Lemma \ref{lem3.1}, we get
\begin{eqnarray}
&&\|\dot{\Delta}_{j}u^{m+1}(t)\|_{M^{p}_{q}}\nonumber\\&\leq&\|\dot{\Delta}_{j}u^{m+1}(0)\|_{M^{p}_{q}}+\int^{t}_{0}\|[\upsilon^{m}\cdot\nabla,\dot{\Delta}_{j}]u^{m+1}(\tau)\|_{M^{p}_{q}}d\tau
\nonumber\\&&+\int^{t}_{0}\|\dot{\Delta}_{j}((u^{m}\cdot\nabla)\upsilon^{m})(\tau)\|_{M^{p}_{q}}d\tau+\int^{t}_{0}\|\dot{\Delta}_{j}\nabla
\Pi^{m+1}(\tau)\|_{M^{p}_{q}}d\tau.\label{R-E50}
\end{eqnarray}
Multiplying both sides by $2^{j(s-1)}$ and taking the $\ell^{r}$
norm, it holds that
\begin{eqnarray}
&&\|u^{m+1}(t)\|_{\dot{N}^{s-1}_{p,q,r}}\nonumber\\&\leq&\|u^{m+1}(0)\|_{\dot{N}^{s-1}_{p,q,r}}
+\int^{t}_{0}\Big\|2^{j(s-1)}\|[\upsilon^{m}\cdot\nabla,\dot{\Delta}_{j}]u^{m+1}(\tau)\|_{M^{p}_{q}}\Big\|_{\ell^{r}}d\tau
\nonumber\\&&+\int^{t}_{0}\|(u^{m}\cdot\nabla)\upsilon^{m}(\tau)\|_{\dot{N}^{s-1}_{p,q,r}}d\tau+\int^{t}_{0}\|\nabla
\Pi^{m+1}(\tau)\|_{\dot{N}^{s-1}_{p,q,r}}d\tau\nonumber\\&=:&I+II+III+IV.
\label{R-E51}
\end{eqnarray}
From Bernstein's inequality, we have
\begin{eqnarray}
I=\|\dot{\Delta}_{m}\upsilon_{0}(x)\|_{\dot{N}^{s-1}_{p,q,r}}\leq
C2^{-m}\|\dot{\Delta}_{m}\upsilon_{0}(x)\|_{\dot{N}^{s}_{p,q,r}}\leq
C2^{-m}\|\upsilon_{0}(x)\|_{\dot{N}^{s}_{p,q,r}}.\label{R-E52}
\end{eqnarray}
For the estimate of $II$, we have by Lemma \ref{lem3.5} (taking
$p_{1}=\infty$ and $q_{1}=q_{2}=q$)
\begin{eqnarray}
II&\leq&
C\int^{t}_{0}\Big(\|\nabla\upsilon^{m}(\tau)\|_{L^\infty}\|u^{m+1}(\tau)\|_{\dot{N}^{s-1}_{p,q,r}}
+\|u^{m+1}(\tau)\|_{L^\infty}\|\upsilon^{m}(\tau)\|_{\dot{N}^{s}_{p,q,r}}\Big)d\tau
\nonumber\\&\leq&
C\int^{t}_{0}\|\upsilon^{m}(\tau)\|_{\dot{N}^{s}_{p,q,r}}\|u^{m+1}(\tau)\|_{\dot{N}^{s-1}_{p,q,r}}d\tau.\label{R-E53}
\end{eqnarray}
For the estimate of $III$, it follows from Lemma \ref{lem3.3} that
\begin{eqnarray}
III&\leq&C\int^{t}_{0}\Big(\|u^{m}(\tau)\|_{L^\infty}\|\nabla\upsilon^{m}(\tau)\|_{\dot{N}^{s-1}_{p,q,r}}
+\|\nabla\upsilon^{m}(\tau)\|_{L^\infty}\|\upsilon^{m}(\tau)\|_{\dot{N}^{s-1}_{p,q,r}}\Big)d\tau
\nonumber\\&\leq&
C\int^{t}_{0}\|u^{m}(\tau)\|_{\dot{N}^{s-1}_{p,q,r}}\|\upsilon^{m}(\tau)\|_{\dot{N}^{s}_{p,q,r}}d\tau.\label{R-E54}
\end{eqnarray}

We can estimate $\nabla\Pi^{m+1}$ as follows. From (\ref{R-E47}), it
follows that
\begin{eqnarray}-\Delta\Pi^{m+1}=\mathrm{div}(\upsilon^{m}\cdot\nabla)u^{m+1}+\mathrm{div}(u^{m}\cdot\nabla)\upsilon^{m},\label{R-E55}\end{eqnarray}
which implies that
\begin{eqnarray}
\partial_{i}\partial_{j}\Pi^{m+1}=R_{i}R_{j}\mathrm{div}(\upsilon^{m}\cdot\nabla)u^{m+1}+R_{i}R_{j}\mathrm{div}(u^{m}\cdot\nabla)\upsilon^{m}.\label{R-E56}
\end{eqnarray}
Thanks to $\mathrm{div}\upsilon^{m}=0$, we have
\begin{eqnarray}
\mathrm{div}(\upsilon^{m}\cdot\nabla)u^{m+1}=\sum_{k,l=1}^{n}\partial_{k}\upsilon^{m}_{l}\partial_{l}u^{m+1}_{k}=\sum_{k,l=1}^{n}\partial_{l}
(\partial_{k}\upsilon^{m}_{l}u^{m+1}_{k}).\label{R-E57}
\end{eqnarray}
Hence, by Bernstein's inequality, it holds that
\begin{eqnarray}
\|\nabla \Pi^{m+1}\|_{\dot{N}^{s-1}_{p,q,r}}&\leq&
C\sum^{n}_{i,j=1}\|\partial_{i}\partial_{j}\Pi^{m+1}\|_{\dot{N}^{s-2}_{p,q,r}}\nonumber\\&\leq&C\sum^{n}_{k,l=1}\|R_{i}R_{j}\partial_{k}\upsilon^{m}_{l}\upsilon^{m+1}_{k}\|_{\dot{N}^{s-1}_{p,q,r}}
+\|(u^{m}\cdot\nabla)\upsilon^{m}\|_{\dot{N}^{s-1}_{p,q,r}}
\nonumber\\&\leq&C
\Big(\|\nabla\upsilon^{m}\|_{L^\infty}\|u^{m+1}\|_{\dot{N}^{s-1}_{p,q,r}}+\|u^{m+1}\|_{L^\infty}\|\nabla\upsilon^{m}\|_{\dot{N}^{s-1}_{p,q,r}}\Big)
\nonumber\\&&+C\Big(\|u^{m}\|_{L^\infty}\|\nabla\upsilon^{m}\|_{\dot{N}^{s-1}_{p,q,r}}+\|\nabla\upsilon^{m}\|_{L^\infty}\|u^{m}\|_{\dot{N}^{s-1}_{p,q,r}}\Big)
\nonumber\\&\leq&C\|\upsilon^{m}\|_{\dot{N}^{s}_{p,q,r}}\|u^{m+1}\|_{\dot{N}^{s-1}_{p,q,r}}+\|\upsilon^{m}\|_{\dot{N}^{s}_{p,q,r}}\|u^{m}\|_{\dot{N}^{s-1}_{p,q,r}},
\label{R-E58}
\end{eqnarray}
where we have taken $p_{1}=p_{3}=\infty,\ p_{2}=p_{4}=p$ and
$q_{1}=q_{3}=\infty,\ q_{2}=q_{4}=q$ in Lemma \ref{lem3.3}.

Furthermore, we have
\begin{eqnarray}
IV&\leq&C\int^{t}_{0}\|\upsilon^{m}(\tau)\|_{\dot{N}^{s}_{p,q,r}}\|u^{m+1}(\tau)\|_{\dot{N}^{s-1}_{p,q,r}}d\tau
+C\int^{t}_{0}\|\upsilon^{m}(\tau)\|_{\dot{N}^{s}_{p,q,r}}\|u^{m}(\tau)\|_{\dot{N}^{s-1}_{p,q,r}}d\tau.\label{R-E59}
\end{eqnarray}
Taking the summation of (\ref{R-E51})-(\ref{R-E54}) and
(\ref{R-E59}), we conclude that
\begin{eqnarray}
&&\|u^{m+1}(t)\|_{\dot{N}^{s-1}_{p,q,r}}\nonumber\\&\leq&C2^{-m}\|\upsilon_{0}(x)\|_{N^{s}_{p,q,r}}+C\int^{t}_{0}\|\upsilon^{m}(\tau)\|_{\dot{N}^{s}_{p,q,r}}\|u^{m+1}(\tau)\|_{\dot{N}^{s-1}_{p,q,r}}d\tau
\nonumber\\&&+C\int^{t}_{0}\|\upsilon^{m}(\tau)\|_{\dot{N}^{s}_{p,q,r}}\|u^{m}(\tau)\|_{\dot{N}^{s-1}_{p,q,r}}d\tau.\label{R-E60}
\end{eqnarray}
Following from the similar procedure of estimate leading to
(\ref{R-E40}), we get
\begin{eqnarray}
\|u^{m+1}(t)\|_{M^{p}_{q}}&\leq&
C\|u^{m+1}(0)\|_{M^{p}_{q}}+C\int^{t}_{0}\|(u^{m}\cdot\nabla)\upsilon^{m}(\tau)\|_{M^{p}_{q}}d\tau\nonumber\\&&+C\int^{t}_{0}\|\nabla\Pi^{m+1}(\tau)\|_{M^{p}_{q}}d\tau,
\label{R-E61}
\end{eqnarray}
where the terms in the right side of (\ref{R-E61}) can be estimated
as
\begin{eqnarray}
\|u^{m+1}(0)\|_{M^{p}_{q}}=\|\dot{\Delta}_{m}\upsilon_{0}\|_{M^{p}_{q}}\leq
C2^{-m}\|\nabla \upsilon_{0}\|_{M^{p}_{q}}\leq C2^{-m}\|
\upsilon_{0}\|_{N^{s}_{p,q,r}},\label{R-E62}
\end{eqnarray}
\begin{eqnarray}
\int^{t}_{0}\|(u^{m}\cdot\nabla)\upsilon^{m}(\tau)\|_{M^{p}_{q}}d\tau&\leq&
C\int^{t}_{0}\|\nabla\upsilon^{m}(\tau)\|_{L^\infty}\|u^{m}(\tau)\|_{M^{p}_{q}}d\tau\nonumber\\&\leq&
C\int^{t}_{0}\|\upsilon^{m}(\tau)\|_{N^{s}_{p,q,r}}\|u^{m}(\tau)\|_{N^{s-1}_{p,q,r}}d\tau\label{R-E63}
\end{eqnarray}
and
\begin{eqnarray}
\|\nabla\Pi^{m+1}\|_{M^{p}_{q}}&\leq&\sum^{n}_{k=1}\|\nabla(-\Delta^{-1})\partial_{k}((u^{m+1}\cdot\nabla)\upsilon^{m}_{k})\|_{M^{p}_{q}}\nonumber\\&&
+\sum^{n}_{k=1}\|\nabla(-\Delta^{-1})\partial_{k}((u^{m}\cdot\nabla)\upsilon^{m}_{k})\|_{M^{p}_{q}}\nonumber\\&\leq
&
C\|(u^{m+1}\cdot\nabla)\upsilon^{m}\|_{M^{p}_{q}}+C\|(u^{m}\cdot\nabla)\upsilon^{m}\|_{M^{p}_{q}}\nonumber\\&\leq&
C\|\nabla\upsilon^{m}\|_{L^\infty}(\|u^{m+1}\|_{M^{p}_{q}}+\|u^{m}\|_{M^{p}_{q}})\nonumber\\&\leq&
C\|\upsilon^{m}\|_{N^{s}_{p,q,r}}(\|u^{m+1}\|_{N^{s-1}_{p,q,r}}+\|u^{m}\|_{N^{s-1}_{p,q,r}}).\label{R-E64}
\end{eqnarray}
Therefore, from (\ref{R-E61})-(\ref{R-E64}), we deduce that
\begin{eqnarray}
&&\|u^{m+1}(t)\|_{M^{p}_{q}}\nonumber\\&\leq&C2^{-m}\|
\upsilon_{0}\|_{N^{s}_{p,q,r}}+C\int^{t}_{0}\|\upsilon^{m}(\tau)\|_{N^{s}_{p,q,r}}(\|u^{m+1}(\tau)\|_{N^{s-1}_{p,q,r}}+\|u^{m}(\tau)\|_{N^{s-1}_{p,q,r}})d\tau.
\label{R-E65}
\end{eqnarray}
Combining (\ref{R-E60}) and (\ref{R-E65}) gives
\begin{eqnarray}
&&\|u^{m+1}(t)\|_{N^{s-1}_{p,q,r}}\nonumber\\&\leq&C2^{-m}\|
\upsilon_{0}\|_{N^{s}_{p,q,r}}+C\int^{t}_{0}\|\upsilon^{m}(\tau)\|_{N^{s}_{p,q,r}}(\|u^{m+1}(\tau)\|_{N^{s-1}_{p,q,r}}+\|u^{m}(\tau)\|_{N^{s-1}_{p,q,r}})d\tau
\label{R-E66}
\end{eqnarray}
for\ $t\in[0,T]$, which yields
\begin{eqnarray}
&&\|u^{m+1}(t)\|_{N^{s-1}_{p,q,r}}\nonumber\\&\leq&CC_{1}2^{-m-1}+CC_{1}T\|u^{m+1}\|_{L^{\infty}_{T_{1}}(N^{s-1}_{p,q,r})}+CC_{1}T\|u^{m}\|_{L^{\infty}_{T_{1}}(N^{s-1}_{p,q,r})},\label{R-E67}
\end{eqnarray}
where $C_{1}$ is the constant obtained for the uniform estimate.
Furthermore, if we choose $T_{1}>0$ sufficiently small so that
$CC_{1}T_{1}\leq1/4$, then
\begin{eqnarray}
&&\|u^{m+1}\|_{L^{\infty}_{T_{1}}(N^{s-1}_{p,q,r})}\nonumber\\&\leq&CC_{1}2^{-m-1}+\frac{1}{2}\|u^{m+1}\|_{L^{\infty}_{T_{1}}(N^{s-1}_{p,q,r})}+\frac{1}{4}\|u^{m}\|_{L^{\infty}_{T_{1}}(N^{s-1}_{p,q,r})},
\label{R-E68}
\end{eqnarray}
which leads to
\begin{eqnarray}
\|u^{m+1}\|_{L^{\infty}_{T_{1}}(N^{s-1}_{p,q,r})}\leq\frac{CC_{1}}{2^{m}},
\   \ m=0,1,2,....\label{R-E69}
\end{eqnarray}
Due to (\ref{R-E69}), it is  clear that
$\|u^{m+1}\|_{L^{\infty}_{T_{1}}(N^{s-1}_{p,q,r})}\rightarrow0$, as
$m$ tends to infinity. Therefore, there exists a limit $\upsilon\in
C([0,T_{1}];N^{s-1}_{p,q,r})$ such that
$\upsilon^{m}(t)\rightarrow\upsilon(t)$ uniformly for
$t\in[0,T_{1}]$ in $N^{s-1}_{p,q,r}$. Moreover, it is easy to see
that $\upsilon$ is a solution of (\ref{R-E1}). Indeed,
$\upsilon\in C([0,T_{1}];N^{s}_{p,q,r})$. This completes the proof of the local existence part.\\

Step 4: \textit{\underline{Uniqueness}}

Suppose that $\upsilon_{1}$ and $\upsilon_{2}$ are two solutions of
(\ref{R-E1}) with the same initial data. Set
$$\delta\upsilon=\upsilon_{1}-\upsilon_{2}.$$ Then we get
\begin{equation}
\left\{
\begin{array}{l}
\partial_{t}\delta\upsilon+(\upsilon_{1}\cdot\nabla)\delta\upsilon+(\delta\upsilon\cdot\nabla)\upsilon_{2}+\nabla \widetilde{\Pi} = 0,\\
\mathrm{div}\upsilon_{1}=0,\ \ \mathrm{div}\upsilon_{2}=0,\\
\upsilon(x,0)=0
\end{array} \right.\label{R-E70}
\end{equation}
where $\widetilde{\Pi}=P_{1}-P_{2}$ with the associated pressures
with $\upsilon_{1}$ and $\upsilon_{2}$, respectively. We follow the
strategy to derive the inequality (\ref{R-E66}) to obtain
\begin{eqnarray}
&&\|\delta\upsilon\|_{L^{\infty}_{T_{1}}(N^{s-1}_{p,q,r})}\nonumber\\&\leq&CT_{1}\|\upsilon_{1}\|_{L^{\infty}_{T_{1}}(N^{s}_{p,q,r})}\|\delta\upsilon\|_{L^{\infty}_{T_{1}}(N^{s-1}_{p,q,r})}
+CT_{1}\|\upsilon_{2}\|_{L^{\infty}_{T_{1}}(N^{s}_{p,q,r})}\|\delta\upsilon\|_{L^{\infty}_{T_{1}}(N^{s-1}_{p,q,r})}
\nonumber\\&\leq&2CC_{1}T_{1}\|\delta\upsilon\|_{L^{\infty}_{T_{1}}(N^{s-1}_{p,q,r})},\label{R-E71}
\end{eqnarray}
where $C_{1}>0$ is the constant obtained by the existence part. So
if we choose $T_{1}>0$ such that $CC_{1}T_{1}\leq1/4$, then
\begin{eqnarray}
\|\delta\upsilon\|_{L^{\infty}_{T_{1}}(N^{s-1}_{p,q,r})}\leq\frac{1}{2}\|\delta\upsilon\|_{L^{\infty}_{T_{1}}(N^{s-1}_{p,q,r})},\label{R-E72}
\end{eqnarray}
which implies $\delta\upsilon=0$ for any $t\in T_{1}$, i.e., $\upsilon_{1}\equiv\upsilon_{2}$ for any $t\in T_{1}$.\\

Step 5: \textit{\underline{Blow-up criterion}}

Suppose that $\upsilon$ is the solution of (\ref{R-E1}) in the class
$C([0,T];N^{s}_{p,q,r})$. As shown by \cite{MB}, for the divergence
free of $\upsilon$, we have the relation between the gradient of
velocity and vorticity
\begin{eqnarray}
\nabla\upsilon=\mathcal{P}(\omega)+A\omega,\label{R-E73}
\end{eqnarray}
where $\mathcal{P}$ is a singular integral operator homogeneous of
degree $-n$ and $A$ is a constant matrix. By the boundedness of the
singular integral operator from $\dot{B}^{0}_{\infty,\infty}$ into
itself \cite{TH}, and Lemma \ref{lem3.2}, we get
\begin{eqnarray}
\|\nabla\upsilon\|_{L^\infty}&\leq&
C\Big(1+\|\nabla\upsilon\|_{\dot{B}^{0}_{\infty,\infty}}(\log^{+}\|\nabla\upsilon\|_{N^{s-1}_{p,q,r}}+1)\Big)
\nonumber\\&\leq&C\Big(1+\|\omega\|_{\dot{B}^{0}_{\infty,\infty}}(\log^{+}\|\upsilon\|_{N^{s}_{p,q,r}}+1)\Big)
\label{R-E74}
\end{eqnarray}
for $s>1+n/p$.

As (\ref{R-E43}) previously, we obtain similarly
\begin{eqnarray}
\|\upsilon(t)\|_{N^{s}_{p,q,r}}\leq
C\|\upsilon_{0}\|_{N^{s}_{p,q,r}}+C\int^{t}_{0}\|\nabla\upsilon(\tau)\|_{L^{\infty}}\|\upsilon(\tau)\|_{N^{s}_{p,q,r}}d\tau.\label{R-E75}
\end{eqnarray}
Substituting (\ref{R-E74}) into (\ref{R-E75}) to get
\begin{eqnarray}
&&\|\upsilon(t)\|_{N^{s}_{p,q,r}}\nonumber\\&\leq&
C\|\upsilon_{0}\|_{N^{s}_{p,q,r}}+C\int^{t}_{0}\Big(1+\|\omega(\tau)\|_{\dot{B}^{0}_{\infty,\infty}}(\log^{+}\|\upsilon(\tau)\|_{N^{s}_{p,q,r}}+1)\Big)\|\upsilon(\tau)\|_{N^{s}_{p,q,r}}d\tau,
\label{R-E76}
\end{eqnarray}
which implies that
\begin{eqnarray}
\|\upsilon(t)\|_{N^{s}_{p,q,r}}\leq
C_{2}\|\upsilon_{0}\|_{N^{s}_{p,q,r}}\exp\Big[C_{3}\exp\Big(C_{4}\int^{t}_{0}(1+\|\omega(\tau)\|_{\dot{B}^{0}_{\infty,\infty}})d\tau\Big)\Big],\label{R-E77}
\end{eqnarray}
by Gronwall's inequality. Here $C_{2},C_{3}$ and $C_{4}$ are some
positive constants. Therefore, if $\limsup_{t\rightarrow
T^{*}-}\|\upsilon(t)\|_{N^{s}_{p,q,r}}=\infty$, then
$\int^{T^{*}}_{0}\|\omega(t)\|_{\dot{B}^{0}_{\infty,\infty}}dt=\infty$.

On the other hand, it follows from Sobolev embedding
$N^{s}_{p,q,r}\hookrightarrow
L^\infty\hookrightarrow\dot{B}^{0}_{\infty,\infty}$ for $s>1+n/p$
that
\begin{eqnarray}
\int^{T^{*}}_{0}\|\omega(t)\|_{\dot{B}^{0}_{\infty,\infty}}dt\leq\int^{T^{*}}_{0}\|\nabla\upsilon(t)\|_{L^\infty}dt\leq
T^{*}\sup_{t\in[0,T^{*}]}\|\upsilon(t)\|_{N^{s}_{p,q,r}}.
\label{R-E78}
\end{eqnarray}
Then
$\int^{T^{*}}_{0}\|\omega(t)\|_{\dot{B}^{0}_{\infty,\infty}}dt=\infty$
implies that $\limsup_{t\rightarrow
T^{*}-}\|\upsilon(t)\|_{N^{s}_{p,q,r}}=\infty$.

Besides, for $s=1+n/p$, since $\dot{B}^{0}_{\infty,1}\hookrightarrow
L^{\infty}$ and the singular integral operator $\mathcal{P}$ is
bounded from $\dot{B}^{0}_{\infty,1}$ into itself, we have
\begin{eqnarray}
\|\nabla\upsilon\|_{L^{\infty}}\leq
C\|\nabla\upsilon\|_{\dot{B}^{0}_{\infty,1}}\leq
C\|\omega\|_{\dot{B}^{0}_{\infty,1}}.\label{R-E79}
\end{eqnarray}
Substituting (\ref{R-E79}) into (\ref{R-E75}), we have
\begin{eqnarray}
\|\upsilon(t)\|_{N^{s}_{p,q,r}}\leq
C\|\upsilon_{0}\|_{N^{s}_{p,q,r}}+C\int^{t}_{0}\|\omega(\tau)\|_{\dot{B}^{0}_{\infty,1}}\|\upsilon(\tau)\|_{N^{s}_{p,q,r}}d\tau.\label{R-E80}
\end{eqnarray}
Then Gronwall's inequality gives
\begin{eqnarray}
\|\upsilon(t)\|_{N^{s}_{p,q,r}}\leq
C_{5}\|\upsilon_{0}\|_{N^{s}_{p,q,r}}\exp\Big(C_{6}\int^{t}_{0}\|\omega(\tau)\|_{\dot{B}^{0}_{\infty,1}}d\tau\Big)\label{R-E81}
\end{eqnarray}
for some positive constants $C_{5}$ and $C_{6}$.

On the other hand, it follows from the Sobolev embedding
$N^{n/p}_{p,q,1}\hookrightarrow\dot{N}^{n/p}_{p,q,1}\hookrightarrow\dot{B}^{0}_{\infty,1}$
that
\begin{eqnarray}
\int^{T}_{0}\|\omega(t)\|_{\dot{B}^{0}_{\infty,1}}dt\leq\int^{T}_{0}\|\nabla\upsilon(t)\|_{N^{n/p}_{p,q,1}}dt\leq
T\sup_{t\in[0,T]}\|\upsilon(t)\|_{N^{1+n/p}_{p,q,1}}.\label{R-E82}
\end{eqnarray}
(\ref{R-E81})-(\ref{R-E82}) implies the blow-up criterion for the case of $s=1+n/p$.\\

Step 6: \textit{\underline{Solve the linear equations}}

To finish the Proof of Theorem \ref{thm1.1}, what left is to solve
the linear equations (\ref{R-E25}). Our idea is to approximate
(\ref{R-E25}) by the linear transport equations. First, we see that
(\ref{R-E25}) is equivalent to the following system
\begin{equation}
\left\{
\begin{array}{l}
\partial_{t}\upsilon+(w\cdot\nabla)\upsilon+\nabla P = 0,\\
-\Delta P=\mathrm{div}((w\cdot\nabla)\upsilon),\\
\upsilon(x,0)=\upsilon_{0}(x), \mathrm{div}\upsilon_{0}=0,
\end{array} \right.\label{R-E83}
\end{equation}
which can be approximated by the following linear transport
equations
\begin{equation}
\left\{
\begin{array}{l}
\partial_{t}\upsilon^{n+1}+(w\cdot\nabla)\upsilon^{n}+\nabla P^{n} = 0,\\
-\Delta P^{n}=\mathrm{div}((w\cdot\nabla)\upsilon^{n}),\\
\upsilon^{n+1}(x,0)=S_{n+1}\upsilon_{0}.
\end{array} \right.\label{R-E84}
\end{equation}
The existence theorem for (\ref{R-E84}) is well-known for each $n$.
To prove the solvability of (\ref{R-E83}), it is suffice to prove
the uniform estimate for the sequence $\{\upsilon^{n+1}\}$ in the
$N^{s}_{p,q,r}$ framework and the Cauchy convergence of the
corresponding sequence. Indeed, This depends on the following a
priori estimates for (\ref{R-E84}), which can be shown in a similar
manner with (\ref{R-E38}) and (\ref{R-E42}). Precisely,
\begin{eqnarray}
\|\upsilon(t)\|_{\dot{N}^{s}_{p,q,r}}&\leq&
C\|\upsilon_{0}\|_{\dot{N}^{s}_{p,q,r}}+C\int^t_{0}\|\nabla
P(\tau)\|_{\dot{N}^{s}_{p,q,r}}d\tau+C
\int^t_{0}\|2^{js}\|[w\cdot\nabla,\dot{\Delta}_{j}]\upsilon(\tau)\|_{M^p_{q}}\|_{\ell^r}d\tau
\nonumber \\&  \leq&
C\|\upsilon_{0}\|_{\dot{N}^{s}_{p,q,r}}+C\int^t_{0}\|w\|_{\dot{N}^{s}_{p,q,r}}\|\upsilon\|_{\dot{N}^{s}_{p,q,r}}d\tau
\label{R-E85}
\end{eqnarray}
and
\begin{eqnarray}
\|\upsilon(t)\|_{M^p_{q}}&\leq&
C\|\upsilon_{0}\|_{M^p_{q}}+C\int^t_{0}\|\nabla
P(\tau)\|_{M^p_{q}}d\tau \nonumber \\&  \leq&
C\|\upsilon_{0}\|_{M^p_{q}}+C\int^t_{0}\|w\|_{N^{s}_{p,q,r}}\|\upsilon\|_{\dot{N}^{s}_{p,q,r}}d\tau.\label{R-E86}
\end{eqnarray}
Hence, we easily arrive at
\begin{eqnarray}
\|\upsilon(t)\|_{N^{s}_{p,q,r}}\leq
C\|\upsilon_{0}\|_{N^{s}_{p,q,r}}+C\int^t_{0}\|w\|_{N^{s}_{p,q,r}}\|\upsilon\|_{N^{s}_{p,q,r}}d\tau.\label{R-E87}
\end{eqnarray}
Applying Gronwall's inequality on (\ref{R-E87}) to get
\begin{eqnarray}
\|\upsilon(t)\|_{N^{s}_{p,q,r}}\leq
C\|\upsilon_{0}\|_{N^{s}_{p,q,r}}\exp\Big(C\int^T_{0}\|w(t)\|_{N^{s}_{p,q,r}}dt\Big),\
\ t\in[0,T].\label{R-E88}
\end{eqnarray}
Having the a priori estimate (\ref{R-E88}), the existence and
uniqueness of solutions for the system (\ref{R-E83}) can be obtained
by the approximate solution sequence $\{\upsilon^{n+1}\}$ of
(\ref{R-E84}). This finished the proof of Proposition \ref{prop4.1}.

Above all, we complete the proof of Theorem \ref{thm1.1} eventually.

\section{Proof of Theorem \ref{thm1.2}}\label{sec:5}
\setcounter{equation}{0}

In the similar spirit, we can prove the Theorem \ref{thm1.2}. In
comparison with the Euler equations (\ref{R-E1}), it is suffice to
handle with the coupling between the velocity field and magnetic
field in the MHD system (\ref{R-E2}). Therefore, we only give the
crucial estimates for conciseness. First, we consider the linear
equations of MHD system:
\begin{equation}
\left\{
\begin{array}{l}
\partial_{t}\upsilon+(w\cdot\nabla)\upsilon-(a\cdot\nabla)b+\nabla\Pi=0
,\\
\partial_{t}b+(w\cdot\nabla)b-(a\cdot\nabla)\upsilon=0,\\
\mathrm{div}\upsilon=0,\ \ \ \mathrm{div}b=0,\\
\upsilon(x,0)=\upsilon_{0}(x),\ \ b(x,0)=b_{0}.
\end{array} \right.\label{R-E89}
\end{equation}
For (\ref{R-E89}), similar to the Proposition \ref{prop4.1}, we have

\begin{prop}\label{prop5.1}
Assume that $\mathrm{div}w=\mathrm{div}a=0, (w,a)\in
L^{\infty}(0,T,N^{s}_{p,q,r})$ for some $T>0, s>1+n/p, 1<q\leq
p<\infty, r\in[1,\infty]$ or $s=1+n/p, 1<q\leq p<\infty$ and $r=1$.
Then for any $(\upsilon_{0},b_{0})\in N^{s}_{p,q,r}$ and
$\mathrm{div}\upsilon_{0}=\mathrm{div}b_{0}=0$, there exists a
unique solution $(\upsilon,b)\in C([0,T];N^{s}_{p,q,r})$ to the
linear system (\ref{R-E89}). And consequently, $\nabla \Pi$ can be
uniquely determined.
\end{prop}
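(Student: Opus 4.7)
The plan is to adapt Step 6 in the proof of Theorem \ref{thm1.1} to the linear MHD system (\ref{R-E89}), the main new ingredient being the coupling between the velocity and magnetic fields through the source terms $(a\cdot\nabla)b$ and $(a\cdot\nabla)\upsilon$. Taking the divergence of the first equation gives the Poisson relation $-\Delta\Pi = \mathrm{div}((w\cdot\nabla)\upsilon) - \mathrm{div}((a\cdot\nabla)b)$, so $\nabla\Pi$ is automatically determined by $(w,a,\upsilon,b)$. I would then approximate (\ref{R-E89}) by the linear transport iteration analogous to (\ref{R-E84}):
\begin{equation*}
\left\{\begin{array}{l}
\partial_{t}\upsilon^{n+1}+(w\cdot\nabla)\upsilon^{n}-(a\cdot\nabla)b^{n}+\nabla \Pi^{n} = 0,\\
\partial_{t}b^{n+1}+(w\cdot\nabla)b^{n}-(a\cdot\nabla)\upsilon^{n} = 0,\\
-\Delta \Pi^{n}=\mathrm{div}((w\cdot\nabla)\upsilon^{n})-\mathrm{div}((a\cdot\nabla)b^{n}),\\
(\upsilon^{n+1},b^{n+1})(x,0)=(S_{n+1}\upsilon_{0},S_{n+1}b_{0}),
\end{array}\right.
\end{equation*}
each step of which is classically solvable given the inputs $w,a$.

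For the uniform $N^{s}_{p,q,r}$ bound I would apply $\dot{\Delta}_{j}$ to the two transport equations and evaluate along the volume-preserving particle trajectory of $w$ (Lemma \ref{lem3.1} applies since $\mathrm{div}\,w=0$). Taking $M^{p}_{q}$-norms, multiplying by $2^{js}$, and summing in $\ell^{r}$ reduces the estimate to three ingredients: (i) the commutator $\|[w\cdot\nabla,\dot{\Delta}_{j}]\cdot\|_{M^{p}_{q}}$, handled by Lemma \ref{lem3.4}; (ii) the coupling source $\|(a\cdot\nabla)b^{n}\|_{\dot{N}^{s}_{p,q,r}}$ and its velocity analogue, handled by the Moser-type Lemma \ref{lem3.3}; and (iii) the pressure $\nabla\Pi^{n}$, estimated via $\partial_{i}\partial_{j}\Pi^{n}=R_{i}R_{j}[\mathrm{div}((w\cdot\nabla)\upsilon^{n})-\mathrm{div}((a\cdot\nabla)b^{n})]$, where the identities $\mathrm{div}((w\cdot\nabla)\upsilon^{n})=\sum_{k,l}\partial_{l}(\partial_{k}w_{l}\upsilon^{n}_{k})$ and similarly for $(a,b^{n})$ (consequences of $\mathrm{div}\,w=\mathrm{div}\,a=0$) reduce the analysis to the $M^{p}_{q}$-boundedness of Riesz transforms plus Lemma \ref{lem3.3}. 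The $M^{p}_{q}$-component of the inhomogeneous norm is treated as in (\ref{R-E40})-(\ref{R-E42}). Summing velocity and magnetic contributions yields
$$\|(\upsilon^{n+1},b^{n+1})(t)\|_{N^{s}_{p,q,r}} \leq C\|(\upsilon_{0},b_{0})\|_{N^{s}_{p,q,r}} + C\int_{0}^{t}\|(w,a)(\tau)\|_{N^{s}_{p,q,r}}\|(\upsilon^{n},b^{n})(\tau)\|_{N^{s}_{p,q,r}}\,d\tau,$$
uniformly in $n$, and Gronwall then delivers the analogue of (\ref{R-E88}).

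Cauchy convergence of the sequence in $C([0,T];N^{s-1}_{p,q,r})$ would follow by applying the same machinery to the differences $(\upsilon^{n+1}-\upsilon^{n},b^{n+1}-b^{n})$, now invoking Lemma \ref{lem3.5} in place of Lemma \ref{lem3.4} (the regularity index $s-1>-1$ is in range). The limit is a solution in $C([0,T];N^{s}_{p,q,r})$, and uniqueness follows from the same energy-type inequality applied to the difference of any two solutions. The principal technical obstacle is tracking the cross-coupling in the pressure cleanly, since $\nabla\Pi$ depends bilinearly on both $(w,\upsilon)$ and $(a,b)$; this is defused by the symmetric $R_{i}R_{j}$ representation above, which confines all bilinear effects to a form diagonal in the $(w,a)$ versus $(\upsilon,b)$ pairing and therefore amenable to Lemma \ref{lem3.3} without any additional structural argument.
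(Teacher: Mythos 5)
There is a genuine gap, and it sits exactly at the point the paper flags as the main difficulty (Remark 1.2): you treat the coupling terms $(a\cdot\nabla)b^{n}$ and $(a\cdot\nabla)\upsilon^{n}$ as source terms and estimate them by the Moser-type Lemma \ref{lem3.3}. That estimate costs a full derivative: $\|(a\cdot\nabla)b^{n}\|_{\dot{N}^{s}_{p,q,r}}\lesssim \|a\|_{L^\infty}\|\nabla b^{n}\|_{\dot{N}^{s}_{p,q,r}}+\|\nabla b^{n}\|_{L^\infty}\|a\|_{\dot{N}^{s}_{p,q,r}}$, and $\|\nabla b^{n}\|_{\dot{N}^{s}_{p,q,r}}\thickapprox\|b^{n}\|_{\dot{N}^{s+1}_{p,q,r}}$ is not controlled by $\|b^{n}\|_{N^{s}_{p,q,r}}$. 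Hence the uniform bound you claim, with only $\|(\upsilon^{n},b^{n})\|_{N^{s}_{p,q,r}}$ on the right, does not follow, and the same loss reappears (at level $s-1$) in your Cauchy-difference step, so the scheme does not close. Note that the pressure is not the issue — your $R_{i}R_{j}$ reduction there is fine because the divergence structure already places those bilinear terms at regularity $s-1$; the problem is the first-order coupling in the transport part. (The commutator Lemmas \ref{lem3.4}--\ref{lem3.5} avoid this loss only for the terms that sit inside a genuine transport operator; a raw term $(a\cdot\nabla)\dot{\Delta}_{j}b$ left over as a source still carries the factor $2^{j}$.)

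The paper's remedy, which your argument is missing, is structural rather than estimative: add and subtract the two equations of (\ref{R-E89}) to get the Els\"asser-type form $\partial_{t}(\upsilon+b)+((w-a)\cdot\nabla)(\upsilon+b)+\nabla\Pi=0$ and $\partial_{t}(\upsilon-b)+((w+a)\cdot\nabla)(\upsilon-b)+\nabla\Pi=0$, where $\mathrm{div}(w\mp a)=0$. Then the whole coupling is absorbed into the transport operator, and one runs the Euler-type argument of Step 6 twice: apply $\dot{\Delta}_{j}$, evaluate along the volume-preserving trajectories of $w-a$ and $w+a$ (the analogues of (\ref{R-E4})--(\ref{R-E5}), (\ref{R-E92}) and (\ref{R-E104})), use Lemma \ref{lem3.1} for the $M^{p}_{q}$ invariance, and control all four commutators $[w\cdot\nabla,\dot{\Delta}_{j}]$, $[a\cdot\nabla,\dot{\Delta}_{j}]$ acting on $\upsilon\pm b$ by Lemma \ref{lem3.4} (Lemma \ref{lem3.5} for the differences), exactly as in (\ref{R-E91})--(\ref{R-E95}) and (\ref{R-E100})--(\ref{R-E108}) with $(w,a)$ in place of $(\upsilon^{m},b^{m})$; the pressure is handled as you propose. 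This yields the closed inequality $\|(\upsilon,b)(t)\|_{N^{s}_{p,q,r}}\leq C\|(\upsilon_{0},b_{0})\|_{N^{s}_{p,q,r}}+C\int_{0}^{t}\|(w,a)\|_{N^{s}_{p,q,r}}\|(\upsilon,b)\|_{N^{s}_{p,q,r}}d\tau$ without any $N^{s+1}$ norms, after which Gronwall, the approximation scheme and the uniqueness argument go through as in Proposition \ref{prop4.1}.
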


Based on Proposition \ref{prop5.1}, we construct the following
approximate linear system of (\ref{R-E2})
\begin{equation}
\left\{
\begin{array}{l}
\partial_{t}\upsilon^{m+1}+(\upsilon^{m}\cdot\nabla)\upsilon^{m+1}-(b^{m}\cdot\nabla)b^{m+1}+\nabla\Pi^{m+1}=0
,\\
\partial_{t}b^{m+1}+(\upsilon^{m}\cdot\nabla)b^{m+1}-(b^{m}\cdot\nabla)\upsilon^{m+1}=0,\\
\mathrm{div}\upsilon^{m+1}=\mathrm{div}\upsilon^{m}=0,\ \ \ \mathrm{div}b^{m+1}=\mathrm{div}b^{m}=0,\\
\upsilon^{m+1}_{0}=S_{m+1}\upsilon_{0}(x),\ \
b^{m+1}_{0}=S_{m+1}b_{0}.
\end{array} \right.\label{R-E90}
\end{equation}
for $m=0,1,2,...$, where we set $\upsilon^{0}=b^{0}=0$.

In what follows, we give the uniform estimates for approximate
solution sequence $\{(\upsilon^{m+1},b^{m+1})\}$. Indeed, we perform
$\dot{\Delta}_{j}(j\in \mathbb{Z})$ on the first two equations of
(\ref{R-E90}) to get
\begin{equation}
\left\{
\begin{array}{l}
\partial_{t}\dot{\Delta}_{j}\upsilon^{m+1}+(\upsilon^{m}\cdot\nabla)\dot{\Delta}_{j}\upsilon^{m+1}
-(b^{m}\cdot\nabla)\dot{\Delta}_{j}b^{m+1}\\=[\upsilon^{m}\cdot\nabla,\dot{\Delta}_{j}]\upsilon^{m+1}-
[b^{m}\cdot\nabla,\dot{\Delta}_{j}]b^{m+1}-\dot{\Delta}_{j}\nabla
\Pi^{m+1},\\[3mm]
\partial_{t}\dot{\Delta}_{j}b^{m+1}+(\upsilon^{m}\cdot\nabla)\dot{\Delta}_{j}b^{m+1}-(b^{m}\cdot\nabla)\dot{\Delta}_{j}\upsilon^{m+1}\\
=[\upsilon^{m}\cdot\nabla,\dot{\Delta}_{j}]b^{m+1}-[b^{m}\cdot\nabla,\dot{\Delta}_{j}]\upsilon^{m+1}.
\end{array} \right.\label{R-E91}
\end{equation}
To deal with the coupling of $\upsilon^{m+1}$ and $b^{m+1}$, similar
to the particle trajectory mapping $\{X^{m}(\alpha,t)\}$, we define
$\{Y^{m}(\alpha,t)\}$ as follows
\begin{equation}
\left\{
\begin{array}{l}
\partial_{t}Y^{m}(\alpha,t)=(\upsilon^{m}-b^{m})(Y^{m}(\alpha,t),t),\\
Y^{m}(\alpha,0)=\alpha.
\end{array} \right.\label{R-E92}
\end{equation}
Note that $\mathrm{div}(\upsilon^{m}-b^{m})=0$ implies that each
$\alpha\mapsto Y^{m}(\alpha,t)$ is a volume-preserving mapping for
all $t>0$. So it follows from the particle trajectory mapping
(\ref{R-E92}) that
\begin{eqnarray}
\partial_{t}\dot{\Delta}_{j}(\upsilon^{m+1}+b^{m+1})+[(\upsilon^{m}-b^{m})\cdot\nabla]\dot{\Delta}_{j}(\upsilon^{m+1}+b^{m+1})\Big|_{(x,t)
=(Y^{m}(\alpha,t),t)}\nonumber\\
=\frac{\partial}{\partial{t}}\dot{\Delta}_{j}(\upsilon^{m+1}+b^{m+1})(Y^{m}(\alpha,t),t)\label{R-E93}
\end{eqnarray}
which yields
\begin{eqnarray}
&&|\dot{\Delta}_{j}(\upsilon^{m+1}+b^{m+1})(Y^{m}(\alpha,t),t)|\nonumber\\&\leq&|\dot{\Delta}_{j}(\upsilon^{m+1}+b^{m+1})(\alpha,0)|+\int^{t}_{0}|\dot{\Delta}_{j}\nabla
\Pi^{m+1}(Y^{m}(\alpha,\tau),\tau)|d\tau
\nonumber\\&&+\int^{t}_{0}|[\upsilon^{m}\cdot\nabla,\dot{\Delta}_{j}]\upsilon^{m+1}(Y^{m}(\alpha,\tau),\tau)|d\tau
+\int^{t}_{0}|[b^{m}\cdot\nabla,\dot{\Delta}_{j}]b^{m+1}(Y^{m}(\alpha,\tau),\tau)|d\tau\nonumber\\&&+
\int^{t}_{0}|[\upsilon^{m}\cdot\nabla,\dot{\Delta}_{j}]b^{m+1}(Y^{m}(\alpha,\tau),\tau)|d\tau+
\int^{t}_{0}|[b^{m}\cdot\nabla,\dot{\Delta}_{j}]\upsilon^{m+1}(Y^{m}(\alpha,\tau),\tau)|d\tau.\label{R-E94}
\end{eqnarray}
As (\ref{R-E38}), we deduce similarly
\begin{eqnarray}
&&\|\upsilon^{m+1}(t)+b^{m+1}(t)\|_{\dot{N}^s_{p,q,r}}\nonumber\\&\leq&
\|\upsilon^{m+1}_{0}\|_{\dot{N}^s_{p,q,r}}+\|b^{m+1}_{0}\|_{\dot{N}^s_{p,q,r}}
+C\int^{t}_{0}\|\nabla \Pi^{m+1}(\tau)\|_{\dot{N}^s_{p,q,r}}d\tau
\nonumber\\&&+C\int^{t}_{0}\Big\|2^{js}\|[\upsilon^{m}\cdot\nabla,\dot{\Delta}_{j}]\upsilon^{m+1}(\tau)\|_{M^{p}_{q}}\Big\|_{\ell^{r}}d\tau
+C\int^{t}_{0}\Big\|2^{js}\|[b^{m}\cdot\nabla,\dot{\Delta}_{j}]b^{m+1}(\tau)\|_{M^{p}_{q}}\Big\|_{\ell^{r}}d\tau
\nonumber\\&&+C\int^{t}_{0}\Big\|2^{js}\|[\upsilon^{m}\cdot\nabla,\dot{\Delta}_{j}]b^{m+1}\|_{M^{p}_{q}}\Big\|_{\ell^{r}}d\tau
+C\int^{t}_{0}\Big\|2^{js}\|[b^{m}\cdot\nabla,\dot{\Delta}_{j}]\upsilon^{m+1}\|_{M^{p}_{q}}\Big\|_{\ell^{r}}d\tau,
\nonumber\\&\leq&\|\upsilon^{m+1}_{0}\|_{\dot{N}^s_{p,q,r}}+\|b^{m+1}_{0}\|_{\dot{N}^s_{p,q,r}}
+C\int^{t}_{0}\|\nabla \Pi^{m+1}(\tau)\|_{\dot{N}^s_{p,q,r}}d\tau
\nonumber\\&&+C\int^{t}_{0}(\|\upsilon^{m}(\tau)\|_{\dot{N}^s_{p,q,r}}+\|b^{m}(\tau)\|_{\dot{N}^s_{p,q,r}})(\|\upsilon^{m+1}(\tau)\|_{\dot{N}^s_{p,q,r}}+\|b^{m+1}(\tau)\|_{\dot{N}^s_{p,q,r}})d\tau,
\label{R-E95}
\end{eqnarray}
where we have used the commutator estimate in Lemma \ref{lem3.4}.

From (\ref{R-E90}), it follows that
$\Delta\Pi^{m+1}=\mathrm{div}\Big((\upsilon^{m}\cdot\nabla)\upsilon^{m+1}-(b^{m}\cdot\nabla)b^{m+1}\Big),$
which implies
\begin{eqnarray}
\partial_{i}\partial_{j}\Pi^{m+1}=-R_{i}R_{j}\mathrm{div}\Big((\upsilon^{m}\cdot\nabla)\upsilon^{m+1}-(b^{m}\cdot\nabla)b^{m+1}\Big).\label{R-E96}
\end{eqnarray}
Since $\mathrm{div}\upsilon^{m}=\mathrm{div}b^{m}=0$, we have
\begin{eqnarray}
\mathrm{div}(\upsilon^{m}\cdot\nabla)\upsilon^{m+1}=\sum_{k,l=1}^{n}\partial_{k}\upsilon^{m}_{l}\partial_{l}\upsilon^{m+1}_{k}=\sum_{k,l=1}^{n}\partial_{l}
(\partial_{k}\upsilon^{m}_{l}\upsilon^{m+1}_{k})\label{R-E97}
\end{eqnarray}
and
\begin{eqnarray}
\mathrm{div}(b^{m}\cdot\nabla)b^{m+1}=\sum_{k,l=1}^{n}\partial_{k}b^{m}_{l}\partial_{l}b^{m+1}_{k}=\sum_{k,l=1}^{n}\partial_{l}
(\partial_{k}b^{m}_{l}b^{m+1}_{k}).\label{R-E98}
\end{eqnarray}
Hence, by Bernstein's inequality, we have
\begin{eqnarray}
\|\nabla \Pi^{m+1}\|_{\dot{N}^{s}_{p,q,r}}&\leq&
C\sum^{n}_{i,j=1}\|\partial_{i}\partial_{j}\Pi^{m+1}\|_{\dot{N}^{s-1}_{p,q,r}}
\nonumber\\&\leq&C\|\upsilon^{m}\|_{\dot{N}^{s}_{p,q,r}}\|\upsilon^{m+1}\|_{\dot{N}^{s}_{p,q,r}}+\|b^{m}\|_{\dot{N}^{s}_{p,q,r}}\|b^{m+1}\|_{\dot{N}^{s}_{p,q,r}}.\label{R-E99}
\end{eqnarray}
Substitute (\ref{R-E99}) into (\ref{R-E95}) to get
\begin{eqnarray}
&&\|\upsilon^{m+1}(t)+b^{m+1}(t)\|_{\dot{N}^s_{p,q,r}}\nonumber\\&\leq&
\|\upsilon^{m+1}_{0}\|_{\dot{N}^s_{p,q,r}}+\|b^{m+1}_{0}\|_{\dot{N}^s_{p,q,r}}
+C\int^{t}_{0}(\|\upsilon^{m}(\tau)\|_{\dot{N}^s_{p,q,r}}+\|b^{m}(\tau)\|_{\dot{N}^s_{p,q,r}})\nonumber\\
&&\hspace{5mm}\times(\|\upsilon^{m+1}(\tau)\|_{\dot{N}^s_{p,q,r}}+\|b^{m+1}(\tau)\|_{\dot{N}^s_{p,q,r}})d\tau.\label{R-E100}
\end{eqnarray}

Next, we turn to estimate the $M^{p}_{q}$ norm. With the help of the
particle trajectory mapping (\ref{R-E92}), we obtain
\begin{eqnarray}
|(\upsilon^{m+1}+b^{m+1})(X^{m}(\alpha,t),t)|\leq|(\upsilon^{m+1}+b^{m+1})(\alpha,0)|+\int^{t}_{0}|\nabla
\Pi^{m+1}(Y^{m}(\alpha,\tau),\tau)|d\tau.\label{R-E101}
\end{eqnarray}
Furthermore, it follows from the fact
$\det\nabla_{\alpha}Y^{m}(\alpha,t)\equiv1$ that
\begin{eqnarray}
&&\|(\upsilon^{m+1}+b^{m+1})(t)\|_{M^{p}_{q}}\nonumber\\&\leq&
\|\upsilon^{m+1}_{0}+b^{m+1}_{0}\|_{M^{p}_{q}}+C\int^{t}_{0}
\Big(\|\upsilon^{m}\|_{M^{p}_{q}}\|\nabla\upsilon^{m+1}\|_{L^\infty}
+\|b^{m}\|_{M^{p}_{q}}\|\nabla b^{m+1}\|_{L^\infty}\Big) d\tau
\nonumber\\&\leq&\|\upsilon^{m+1}_{0}\|_{M^{p}_{q}}+\|b^{m+1}_{0}\|_{M^{p}_{q}}\nonumber\\&&+C
\int^{t}_{0}
\Big(\|\upsilon^{m}(\tau)\|_{N^s_{p,q,r}}\|\upsilon^{m+1}(\tau)\|_{N^s_{p,q,r}}
+\|b^{m}(\tau)\|_{N^s_{p,q,r}}\|b^{m+1}(\tau)\|_{N^s_{p,q,r}}\Big)
d\tau,\label{R-E102}
\end{eqnarray}
where we have used the Lemma \ref{lem2.5}.

Adding (\ref{R-E102}) to (\ref{R-E100}) together, we arrive at
\begin{eqnarray}
&&\|\upsilon^{m+1}(t)+b^{m+1}(t)\|_{N^s_{p,q,r}}\nonumber\\&\leq&
\|\upsilon_{0}\|_{N^s_{p,q,r}}+\|b_{0}\|_{N^s_{p,q,r}}
+C\int^{t}_{0}(\|\upsilon^{m}(\tau)\|_{N^s_{p,q,r}}+\|b^{m}(\tau)\|_{N^s_{p,q,r}})\nonumber\\
&&\hspace{5mm}\times(\|\upsilon^{m+1}(\tau)\|_{N^s_{p,q,r}}+\|b^{m+1}(\tau)\|_{N^s_{p,q,r}})d\tau.\label{R-E103}
\end{eqnarray}

Besides, We define by $\{Z^{m}(\alpha,t)\}$ the family of particle
trajectory mapping
\begin{equation}
\left\{
\begin{array}{l}
\partial_{t}Z^{m}(\alpha,t)=(\upsilon^{m}+b^{m})(Z^{m}(\alpha,t),t),\\
Z^{m}(\alpha,0)=\alpha.
\end{array} \right.\label{R-E104}
\end{equation}
Note that $\mathrm{div}(\upsilon^{m}+b^{m})=0$ implies that each
$\alpha\mapsto Z^{m}(\alpha,t)$ is a volume-preserving mapping for
all $t>0$. It follows from the particle trajectory mapping
(\ref{R-E104}) that
\begin{eqnarray}
\partial_{t}\dot{\Delta}_{j}(\upsilon^{m+1}-b^{m+1})+[(\upsilon^{m}+b^{m})\cdot\nabla]\dot{\Delta}_{j}(\upsilon^{m+1}-b^{m+1})\Big|_{(x,t)
=(Z^{m}(\alpha,t),t)}\nonumber \\
=\frac{\partial}{\partial{t}}\dot{\Delta}_{j}(\upsilon^{m+1}-b^{m+1})(Z^{m}(\alpha,t),t).\label{R-E105}
\end{eqnarray}
Similar to (\ref{R-E103}), we can deduce that
\begin{eqnarray}
&&\|\upsilon^{m+1}(t)-b^{m+1}(t)\|_{N^s_{p,q,r}}\nonumber\\&\leq&
\|\upsilon_{0}\|_{N^s_{p,q,r}}+\|b_{0}\|_{N^s_{p,q,r}}
+C\int^{t}_{0}(\|\upsilon^{m}(\tau)\|_{N^s_{p,q,r}}+\|b^{m}(\tau)\|_{N^s_{p,q,r}})\nonumber\\
&&\hspace{5mm}\times(\|\upsilon^{m+1}(\tau)\|_{N^s_{p,q,r}}+\|b^{m+1}(\tau)\|_{N^s_{p,q,r}})d\tau.\label{R-E106}
\end{eqnarray}
Together with (\ref{R-E103}) and (\ref{R-E106}), we conclude that
\begin{eqnarray}
&&\|\upsilon^{m+1}(t)\|_{N^s_{p,q,r}}+\|b^{m+1}(t)\|_{N^s_{p,q,r}}\nonumber\\&\leq&
C\|\upsilon_{0}\|_{N^s_{p,q,r}}+C\|b_{0}\|_{N^s_{p,q,r}}
+C\int^{t}_{0}(\|\upsilon^{m}(\tau)\|_{N^s_{p,q,r}}+\|b^{m}(\tau)\|_{N^s_{p,q,r}})\nonumber\\
&&\hspace{5mm}\times(\|\upsilon^{m+1}(\tau)\|_{N^s_{p,q,r}}+\|b^{m+1}(\tau)\|_{N^s_{p,q,r}})d\tau.\label{R-E107}
\end{eqnarray}
By using the Gronwall's inequality, we get
\begin{eqnarray}
&&|\upsilon^{m+1}(t)\|_{N^s_{p,q,r}}+\|b^{m+1}(t)\|_{N^s_{p,q,r}}\nonumber\\&\leq&
C\Big(\|\upsilon_{0}\|_{N^s_{p,q,r}}+\|b_{0}\|_{N^s_{p,q,r}}\Big)\exp\Big\{C\int^{t}_{0}
(\|\upsilon^{m}(\tau)\|_{N^s_{p,q,r}}+\|b^{m}(\tau)\|_{N^s_{p,q,r}})d\tau\Big\}.\label{R-E108}
\end{eqnarray}
Based on the above crucial estimates, we can finish the proof of
Theorem \ref{thm1.2} following from the subsequent steps of the
proof of Theorem \ref{thm1.1}. We would like to skip the details,
for conciseness.

\section*{Acknowledgments}
J. Xu is partially supported by the NSFC (11001127), Special
Foundation of China Postdoctoral Science Foundation (2012T50493),
China Postdoctoral Science Foundation (20110490134) and Postdoctoral
Science Foundation of Jiangsu Province (1102057C).

\end{document}